\documentclass[11pt,a4paper,thmsb,ukenglish]{article}
\pagestyle{headings}
\usepackage{epsf,  amsmath, amssymb, graphicx, enumerate}
\usepackage{amsthm, chngcntr}
\usepackage[sort]{natbib}
\usepackage{a4wide}
\usepackage{subfigure}
\usepackage{xcolor}
\usepackage{pdflscape}
\usepackage{hyperref}
\usepackage{bbm}
\usepackage{xr}
\usepackage{accents}
    
\input{math_commands.tex}

\definecolor{darkred}{rgb}{0.8,0,0}
\newcommand{\myRed}[1]{\textcolor{black}{#1}}


\newcommand{\A}{\ensuremath{{\bf A}}} 
\newcommand{\Kt}{\ensuremath{ \bf{K}}} 
\newcommand{\Iit}{\ensuremath{\mathbb I}} 
\newcommand{\Mt}{\ensuremath{\mathbb M}} 
\newcommand{\An}{\ensuremath{\overline{\bf A}}} 
\newcommand{\Id}{\ensuremath{{\bf I}_{d \times d}}} 
\newcommand{\Isquare}{\ensuremath{{\bf I}_{d^2 \times d^2}}} 
\newcommand{\Q}{\ensuremath{\textbf{Q}}} 
\newcommand{\MdR}{\ensuremath{{\mathcal{M}_{d}(\R)}}} 
\newcommand{\MndR}{\ensuremath{{\mathcal{M}_{n,d}(\R)}}} 
\newcommand{\MdB}{\ensuremath{{\mathcal{M}_{d}(\{0,1\})}}} 
\newcommand{\vectorise}{\ensuremath{\mathrm{vec}}}

\newcommand{\N}{\ensuremath{\mathbb{N}}}

\newcommand{\AL}{\ensuremath{\textnormal{AL}}}
\newcommand{\supp}{\ensuremath{\textnormal{supp}}}

\newcommand{\diag}{\ensuremath{\mathrm{diag}}}
\newcommand{\interior}{\ensuremath{\mathrm{int}}}

\newcommand{\trace}{\ensuremath{\mathrm{tr}}}

\newcommand{\indicator}{\ensuremath{\mathbb{I}}}
\newcommand{\proba}{\ensuremath{{\text{P}}}} 
\newcommand{\determinant}{\ensuremath{{\text{det}}}} 
\newcommand{\Acov}{\ensuremath{{\text{Acov}}}} 

\newcommand{\YY}{\ensuremath{\mathbb{Y}}} 
\newcommand{\HH}{\ensuremath{\mathbb{H}}} 
\newcommand{\XX}{\ensuremath{\mathbb{X}}} 
\newcommand{\ZZ}{\ensuremath{\mathbb{Z}}} 
\newcommand{\ZZZ}{\ensuremath{\boldsymbol{Z}}} 
\newcommand{\UUU}{\ensuremath{\boldsymbol{U}}} 
\newcommand{\JJ}{\ensuremath{\mathbb{J}}} 
\newcommand{\Ss}{\ensuremath{\mathbb{S}_d}} 

\newcommand{\Si}{\ensuremath{ {\boldsymbol \Sigma}}}

\newcommand{\boldTheta}{\ensuremath{ {\boldsymbol{\theta}}}}
\newcommand{\boldPsi}{\ensuremath{ {\boldsymbol{\psi}}}}
\newcommand{\boldB}{\ensuremath{ {\boldsymbol{b}}}}

\newcommand{\LL}{\ensuremath{\mathbb{L}}}
\newcommand{\LLL}{\ensuremath{\boldsymbol{L}}}
\newcommand{\VVV}{\ensuremath{\boldsymbol{V}}}
\newcommand{\WW}{\ensuremath{\mathbb{W}}}

\DeclareFontFamily{U}{mathx}{\hyphenchar\font45}
\DeclareFontShape{U}{mathx}{m}{n}{
      <5> <6> <7> <8> <9> <10>
      <10.95> <12> <14.4> <17.28> <20.74> <24.88>
      mathx10
      }{}
\DeclareSymbolFont{mathx}{U}{mathx}{m}{n}
\DeclareFontSubstitution{U}{mathx}{m}{n}

\newcommand{\Levy}{L\'{e}vy\ }
\newcommand{\levy}{L\'{e}vy}

\newtheorem{theorem}{Theorem}

\numberwithin{theorem}{subsection}

\newtheorem{assumption}{Assumption}

\newtheorem{corollary}[theorem]{Corollary}

\newtheorem{definition}{Definition}

\newtheorem{lemma}[theorem]{Lemma}
\newtheorem{notation}[theorem]{Notation}

\newtheorem{proposition}[theorem]{Proposition}
\newtheorem{remark}[theorem]{Remark}


\allowdisplaybreaks

\begin{document}
\title{Likelihood theory for the Graph Ornstein-Uhlenbeck process}
\author{\textsc{Valentin Courgeau\footnote{Corresponding author. Email: valentin.courgeau15@imperial.ac.uk}   \quad    Almut E.\ D.\ Veraart} \\
\textit{Department of Mathematics, Imperial College London}\\
\textit{ 180 Queen's Gate, 
 London, SW7 2AZ, 
UK}  \\
}
\maketitle
\begin{abstract}
We consider the problem of modelling restricted interactions between continuously-observed time series as given by a known static graph (or network) structure. \myRed{For this purpose, we define a parametric multivariate Graph Ornstein-Uhlenbeck (GrOU) process driven by a general \Levy process to study the momentum and network effects amongst nodes, effects that quantify the impact of a node on itself and that of its neighbours, respectively. We derive the maximum likelihood estimators (MLEs) and their usual properties (existence, uniqueness and efficiency) along with their asymptotic normality and consistency. Additionally, an Adaptive Lasso approach, or a penalised likelihood scheme, infers both the graph structure along with the GrOU parameters concurrently and is shown to satisfy similar properties.} Finally, we show that the asymptotic theory extends to the case when stochastic volatility modulation of the driving \Levy process is considered. 
\end{abstract}

 \noindent{\bf Keywords:} 
Ornstein-Uhlenbeck processes, multivariate \Levy process, continuous-time likelihood, maximum likelihood estimator, graphical modelling, central limit theorem, adaptive Lasso.\\


\section{Introduction}

\myRed{Ornstein-Uhlenbeck (OU) models, driven by Brownian motion or \Levy processes, form a class of continuous-time models with a broad range of applications: in finance for pairs trading \citep{hol2018estimation, endres2019tradingLevyDrivenOU} and volatility modelling \citep{barndorff2002bnsmodel, pigorsch2009DefinitionSemiPositiveMultOU}, in neuroscience \citep{Melanson2019DataDrivenStationaryJumpDiffusions}, or even in electricity management \citep{Longoria2019OULevyElectricityPortfolios}. In parallel, high-dimensional time series datasets fostered the development of sparse inference for OU-type processes \citep{Boninsegna2018SparseLearningDynamical, Gaiffas2019SparseOUProcess} as a way to control interactions within complex systems. On the other hand, graphical time series models are usually restricted to discrete-time time series \citep{zhu2017network, knight2016modelling, knight2019generalised}.} 

\myRed{From this observation, we introduce a mean-reverting process formulated as a graphical model and we derive its theoretical properties. This model contributes to the development of methodologies leveraging both the modelling flexibility of continuous-time models and the sparsity of graphical models. Furthermore, this article unlocks both hypothesis testing and uncertainty estimation in this setting.}

\myRed{In our framework, a graph is a set of nodes that interact together through a collection of links, or edges. Each node is described by a numerical value at any time and we observe a multivariate time series, where each component is the value of one node at a given time. To accommodate this graph structure, we introduce the \emph{Graph Ornstein-Uhlenbeck (GrOU)} process, a parametric autoregressive model placed on a graph and driven by a \Levy process \citep{Masuda2004, masuda2007ergodicity}. This strictly stationary model quantifies the impact of a node value on its increments, the \emph{momentum} effect, and the impact of the neighbouring nodes, the \emph{network} effect. It is interpreted as a continuous-time extension to the discrete-time Network Vector Autoregression (NAR) model \citep{zhu2017network}---a model used for the modelling of social media \citep{zhu2020multivariate}, financial returns \citep{chen2020community} or risk dynamics \citep{Chen2019TailEventDrivenNetworks}. We also introduce an alternative formulation with separate momentum and network effects for each node for a finer modelling of the graph dynamics.}

\myRed{For more details, the general multivariate \levy-driven OU process and its ergodic properties are presented in \cite{Masuda2004, masuda2007ergodicity, Sandric2016, Kevei2018}. We refer to \cite{sorensen1991likelihood} for a general theory of the likelihood inference for continuously-observed jump diffusions.}

\myRed{This article focuses on providing the theoretical guarantees necessary for applications.
First, we introduce the \levy-driven GrOU model. Although the estimation of \levy-driven OU processes have been largely considered \citep{masuda2010, Gushchin2020DriftEstimation}, to the best of the authors' knowledge, the formulation of the GrOU process is the first of its kind.}

\myRed{Second, under non-degeneracy conditions, we show that this model belongs to an exponential family \citep{kuchler1997exponentialbook, mancini2009non} with a fully-explicit likelihood function. We derive corresponding maximum likelihood estimators (MLEs) to estimate the momentum and networks effects. Those estimators are shown to be consistent and efficient in the sense of H{\'a}jek-Le Cam's convolution theorem \citep{hajek1970efficiencyLan, LeCam1990}. Those results extend the continuous-time asymptotic results given in the \levy-driven univariate setting \citep{mai2014efficient} and the multivariate case with a Brownian noise \citep{Hopfner2014AsymptoticStatistics, basak2008asymptotic}. }

\myRed{Then, we prove that this family of models is locally asymptotically normal (LAN) \citep{LeCam1990} and provide a couple of central limit theorems (CLTs) under square integrability conditions. In addition, we extend the adaptive Lasso regularisation scheme from \cite{Gaiffas2019SparseOUProcess} to the \levy-driven case and prove the asymptotic normality of the estimator. This approach can be used when the graph structure is not known a priori: our approach can not only be applied on graphical time series but also on general multivariate time series with sparse dependencies.}

\myRed{Finally, by assuming that the covariance matrix of the \Levy process is itself a matrix-valued OU process, i.e.\ both time-dependent and stochastic, the GrOU model is equipped with a stochastic covariance modulation term \citep{pigorsch2009DefinitionSemiPositiveMultOU} along with a pure-jump term. We show that the resulting process would still be ergodic using the mixing criterion from \cite{fuchs2013mixing}. The asymptotic results are shown to hold conditional on the knowledge of this volatility process and open the doors to a more complex modelling of graphical interactions of time series.}


The GrOU process is presented in Section \ref{section:network-ou} where we describe formally the momentum and network effects along with non-degeneracy conditions. Then, Section \ref{section:likelihood-and-estimators} is devoted to setting up the likelihood framework and to proving the existence and uniqueness of the MLEs. In Section \ref{section:asymptotic-theory-continuous-times}, both the LAN property of the model along with the H{\'a}jek-Le Cam efficiency of the MLEs are provided. We also present the MLE CLTs with an explicit limiting covariance matrix. \myRed{We present an Adaptive Lasso scheme as well as extend known Brownian asymptotic properties to the \levy-driven case in Section \ref{section:asymptotics-adaptive-lasso}.} Finally, in Section \ref{section:stoch-vol}, we extend further our framework to include an OU-type stochastic volatility term and we show that, conditional on the knowledge of the stochastic volatility process, the central limit theorems from Sections \ref{section:asymptotic-theory-continuous-times} \& \ref{section:asymptotics-adaptive-lasso} hold.
   
\section{A Graph Ornstein-Uhlenbeck process}
\label{section:network-ou}
In this section, we define the Graph Ornstein-Uhlenbeck (GrOU) process \myRed{to study either the aggregated or individual behaviour of nodes on the graph.}

\subsection{Notations}
\label{sectionL:notations}
We consider a filtered probability space $(\Omega, \mathcal{F}, (\mathcal{F}_t,\ t \in \R), \proba_0)$ to which all stochastic processes are adapted. We consider two-sided \Levy processes $(\LL_t,\ t \in \R)$  (i.e stochastic processes with stationary and independent increments and continuous in probability and $\LL_0 = \boldsymbol{0}_{\rmD}, \ \proba_0-a.s.$)  \citep[Remark 1]{Brockwell2009} which are without loss of generality assumed to be c{\`a}dl{\`a}g and we write $\YY_{t-}:= \lim_{s \uparrow t} \YY_s$ for any $t \in \R$. For any probability measure $\proba$, we denote by $\proba_t$ its restriction to the $\sigma$-field $\mathcal{F}_t$ for any $t \in \R$.

We denote by $\determinant$ the matrix determinant, the space of $\{0,1\}$-valued $d \times d$ matrices by $\MdB$, the space of real-valued $d \times d$ (resp.\ $n\times d$) matrices by $\MdR$ (resp.\ \MndR), the linear subspace of $d\times d$ symmetric matrices by $\Ss$, the (closed in $\Ss$) positive semidefinite cone (i.e.\ with the real parts of their eigenvalues non-negative) by $\Ss^+$ and the (open in $\Ss$) positive definite cone (i.e.\ with the real parts of their eigenvalues positive) by $\Ss^{++}$. In particular, $\Id \in \MdR$ denotes the $d\times d$ identity matrix.

We denote by $\lambda^{leb}$ the one-dimensional Lebesgue measure. For a non-empty topological space, $\mathcal{B}(S)$ is the Borel $\sigma$-algebra on $S$ and $\pi$ is some probability measure on $(S,\mathcal{B}(S))$. The collection of all Borel sets in $S \times \R$ with finite $\pi \otimes \lambda^{leb}$-measure is written as $\mathcal{B}_b(S \times \R)$. Also, the norms of vectors and matrices are denoted by $\|\cdot \|$. We usually take the Euclidean (or Frobenius) norm but due to the equivalence between norms, our results are not norm-specific and are valid under any norm in $\R^d$ or $\MdR$.
In addition, for an invertible matrix $\boldsymbol{M} \in \MdR$, we define $\langle \boldsymbol{x}, \boldsymbol{y} \rangle_{\boldsymbol{M}} := \boldsymbol{x}^\top \boldsymbol{M^{-1}} \boldsymbol{y}$ for $\boldsymbol{x}, \boldsymbol{y} \in \R^d$. Finally, for a process $\XX_t= (X^{(1)}_t,\dots,X^{(d)}_t)^\top\in\R^d$ we denote by $[\XX]_t$ the matrix $([X^{(i)},X^{(j)}]_t)$ of quadratic co-variations up to time $t \geq 0$.

 In this article, $\otimes$ denotes the Kronecker matrix product, $\odot$ is for the Hadamard (element-wise) matrix product and $\vectorise$ is the vectorisation transformation where columns are stacked on one another. We denote the inverse vectorisation transformation by $\vectorise^{-1}(\boldsymbol{x}) := (\vectorise(\Id)^\top \otimes \Id) \cdot (\Id \otimes \boldsymbol{x}) \in \MdR$ for $\boldsymbol{x}\in \R^{d^2}$.

\subsection{The \levy-driven Ornstein-Uhlenbeck process}
\label{section:levy-noise-details}
We recall the construction of Ornstein-Uhlenbeck (OU) processes and introduce their graphical interpretation, namely the GrOU process. We give two parametrisations specific to the modelling of graph structures namely the $\boldTheta$-GrOU and the $\boldPsi$-GrOU: the former encodes the momentum and network effects for the whole graph into two scalar parameters while the latter has separate parameters for each node and each neighbour.

We consider a $d$-dimensional OU process $\YY_t = (Y^{(1)}_t,\dots,Y^{(d)}_t)^\top$ for $t \geq 0$ satisfying the stochastic differential equation (SDE) for a \emph{dynamics} matrix $\rmQ\in\Ss^{++}$
\begin{equation}
	\label{eq:sde}
	d\YY_t = -\rmQ \YY_{t-}dt + d\LL_t,
\end{equation}
for a two-sided $d$-dimensional \Levy process $\LL_t=(L_t^{(1)},\ldots,L_t^{(d)})^{\top}$ \citep[Remark 1]{Brockwell2009} such that $\YY_0$ is independent of $(\LL_t, \ t \geq 0)$ \citep[Section 1]{Masuda2004}.

The \Levy process $\LL$ is defined by the \levy-Khintchine characteristic triplet $(\boldB, \Si, \nu)$ with respect to the truncation function $\tau(\boldsymbol{z}) := \mathbb{I}_{\{\boldsymbol{x} \in \R^d : \|\boldsymbol{x}\| \leq 1\}}(\boldsymbol{z})$ where $\mathbb{I}$ denotes the indicator function. More explicitly, the \levy-Khintchine representation yields, for $t \in \R$,
 $$\small{\E\left[\exp\left(i \boldsymbol{u}^\top \LL_t \right)\right] = \exp\left\{
t\left( i\boldsymbol{u}^\top \boldB - \frac{1}{2}\boldsymbol{u}^\top \Si \boldsymbol{u}+ \int_{\R^d\symbol{92}\{\boldsymbol{0}_d\}}\left[ \exp\left(i \boldsymbol{u}^\top \boldsymbol{z} \right) - 1 - i \boldsymbol{u}^\top \boldsymbol{z} \tau(\boldsymbol{z})\right]d\nu(\boldsymbol{z})\right)\right\}},$$ where $\boldsymbol{u},\ \boldB \in \R^d$, $\Si \in \Ss^{++}$ and $\nu$ is a \Levy measure on $\R^d$ satisfying $\int_{\R^d\symbol{92}\{\boldsymbol{0}\}} (1 \wedge \|\boldsymbol{z}\|^2)\nu(d\boldsymbol{z}) < \infty$. 

Again, without loss of generality, consider that $\proba_0$ is a probability measure where $(\LL_t,\ t \in \R)$ is a c{\`a}dl{\`a}g \Levy process as mentioned above. Also, we denote by $\proba_{t,0}$ the probability measure $P_{0}$ restricted to the $\sigma$-field $\mathcal{F}_t$ as introduced in Section \ref{sectionL:notations}.

\subsection{The OU process on a graph}
	The components of $\YY$ are interpreted as the \emph{nodes} of a graph structure linked together through a collection of \emph{edges}. Those are given in a \emph{adjacency} (or graph topology) matrix  $\A=(a_{ij})\in \MdB$: : $a_{ij}=1$ for an existing link between node $i$ and $j$, 0 otherwise. We assume that $a_{ii}=0$ for all $i \in \{1,\ldots,d\}$. Requiring the knowledge of the adjacency matrix is necessary for the graphical interpretation of the OU process. This limitation is alleviated in the sparse inference scheme introduced in Section \ref{section:asymptotics-adaptive-lasso}. 
\begin{assumption}
\label{assumption:A-known}
We assume that $\A$ is deterministic, static in time and known.
\end{assumption}
For $i\in \{1,\ldots,d\}$, define $n_i:= 1 \vee \sum_{j\not = i} a_{ij}$, which counts the number of neighbouring nodes the $i$-th node is connected to, or node degree. We can now \myRed{define} the row-normalised adjacency matrix
\begin{align*}
\An:= \diag(n_1^{-1},\ldots,n_d^{-1})\A.
\end{align*}
\myRed{to normalise the parameter values representing the momentum and networks effects which we introduce below.}
\paragraph{The $\boldTheta$-GrOU process}
We introduce the two-dimensional parameter vector $\boldTheta := (\theta_1, \theta_2)^\top \in \R^2$ describing the network effect and the momentum effect respectively, and define the matrix 
\begin{equation}
\label{eq:Q-using-theta}
\rmQ  = \Q(\boldTheta) :=\theta_2 \Id + \theta_1 \An.
\end{equation}
\myRed{The row-normalised adjacency matrix $\An$ makes the momentum parameter $\theta_2$ and the network parameter $\theta_1$ in \eqref{eq:Q-using-theta} directly comparable with one another independently of the node degrees.} This yields the SDE given by
$$d\YY_t = -\Q(\boldTheta) \YY_{t-} dt + d\LL_t,$$
whose $i$-th component satisfies the equation 
$$dY^{(i)}_t = - \theta_2 Y^{(i)}_tdt - \theta_1 n_i^{-1}\sum_{j\neq i} a_{ij}Y^{(j)}_t dt + dL^{(i)}_t,  \ t \geq 0.$$
\paragraph{The $\boldPsi$-GrOU process} Recall that we denote by $\odot$ the Hadamard product and by $\vectorise^{-1}$ the inverse vectorisation transformation. In general, for a $d^2$-dimensional vector $\boldPsi \in \R^{d^2}$, we have
\begin{equation}
\label{eq:Q-using-psi}
\rmQ = \Q(\boldPsi) := (\Id + \An) \odot \vectorise^{-1}(\boldPsi),
\end{equation}
\myRed{where
$$ \vectorise^{-1}(\boldPsi) = 
\begin{pmatrix}
\psi_{1} \ &		\dots 	& \psi_{d(d-1)+1} \\
\vdots   \ &  	\ddots 	& \vdots \\
\psi_{d} \ &  	\dots 	& \psi_{d\times d} \\
\end{pmatrix},$$}
such that the corresponding SDE is written as:
$$dY^{(i)}_t = -Q_{ii} Y^{(i)}_tdt - \sum_{j \neq i} Q_{ij} Y^{(j)}_t dt + dL^{(i)}_t, \quad t \geq 0.$$
\myRed{Formally, we obtain
$$dY^{(i)}_t = -\underbrace{\boldPsi_{d(i-1)+i}}_{\text{momentum effect}}Y^{(i)}_tdt - \underbrace{n_i^{-1}\sum_{j \neq i} a_{ij}\boldPsi_{d(j-1)+i}}_{\text{network effect}}Y^{(j)}_tdt + dL^{(i)}_t, \quad t \geq 0.$$}

 This second parametrisation alleviates the scarcity of network interactions imposed by $\boldTheta$ yet exposes the estimation to the curse of dimensionality as the number of nodes $d$ grows. For simplicity, one may write $\Q$ for $\Q(\boldTheta)$ or $\Q(\boldPsi)$ when the context is clear. 
With $\Q(\boldTheta)$, we restrict the interactions to the network and momentum effects. This extends the current framework to partially observable networks (e.g.\ too large for computations) where an exploration process needs to take place \citep[Section 5]{DereichMorters2013randomnetworks}. This makes the estimation robust again the curse of dimensionality coming from the number of nodes. We now define the GrOU process as follows.
\begin{definition}
	\label{definition:grou}
	The Graph Ornstein-Uhlenbeck (GrOU) process is a c{\`a}dl{\`a}g process $(\YY_t,\ t \geq 0)$ satisfying Equation \eqref{eq:sde} for some two-sided \Levy noise $(\LL_t,\ t \in \R)$ where $\Q$ is given by either Equation \eqref{eq:Q-using-theta} or by Equation \eqref{eq:Q-using-psi} such that $\Q$ is positive definite. This process is then called a $\boldTheta$-GrOU process or a $\boldPsi$-GrOU process, respectively.
\end{definition}
We give sufficient conditions for $\Q$ to be positive definite in both cases in Section \ref{section:stationary-solution}.

\subsection{Stationary solution}
\label{section:stationary-solution}
Recall that $\Ss^{++}$ is the set of $d \times d$ matrices such that the real parts of the eigenvalues are positive. \myRed{We restrict ourselves to study strictly stationary time series, we give the known conditions under which the OU process is strictly stationary.}
\begin{remark}
Note that $\rmQ \in \Ss^{++}$ if and only if $\determinant(e^{-t\Q}) \xrightarrow{t \rightarrow + \infty} 0$ \citep{Masuda2004}.
\end{remark}

Given the standard OU processes theory from \cite{Masuda2004,Brockwell2007}, we assume the following:
\begin{assumption}
\label{assumption:strong-solution-vector}
Suppose that $\rmQ \in \Ss^{++}$ and that the \Levy measure $\nu(\cdot)$ satisfies the log moment condition:
\begin{equation*}
    \int_{\| \boldsymbol{z} \|>1}\ln \|\boldsymbol{z}\|\nu(d\boldsymbol{z}) < \infty.
\end{equation*}
\end{assumption}

Then, under Assumption \ref{assumption:strong-solution-vector}, there is a unique strictly stationary solution of the above SDE given by
\begin{equation}
\label{eq:stationary-solution-vec}
\YY_t = e^{-(t-s)\Q}\YY_s + \int_s^t e^{-(t-u)\Q}d\LL_u, \qquad \text{for any $t \geq s \geq 0$.}
\end{equation}
Recall that the \levy-Khintchine characteristic triplet of $\LL$ with respect to the truncation function $\tau$ is denoted $(\boldB, \Si, \nu)$. Proposition 2.1, \cite{Masuda2004} yields that the transition probability from $\boldsymbol{x}$ at time $t$ denoted $\proba(t,\boldsymbol{x},\cdot)$ is characterised by the triplet $(b_{t,\boldsymbol{x}}, C_t, \nu_t)$ (with respect to $\tau$) defined as
\begin{align*}
	b_{t,\boldsymbol{x}} &:= e^{-t\Q}x + \int_0^t e^{-s\Q}bds + \int_{\R^d}\int_0^t e^{-s\Q}z\left[\tau(e^{-s\Q}z) - \tau(z) \right]ds\nu(dz),\\
	C_t &:= \int_0^t e^{-s\Q}Ce^{-s\Q^\top}ds, \quad \nu_t(S) := \int_0^t \nu\left(e^{s\Q}S\right)ds, \quad \text{for any } S \in \mathcal{B} (\R^d),
\end{align*}
where the limit as $t \rightarrow \infty$ leads a characteristic triplet of the form $(\boldB_{\infty}, C_{\infty}, \nu_{\infty})$ which characterises the unique invariant distribution of $\YY$ denoted by $\pi$ i.e.\ $\YY_t \xrightarrow{\ \mathcal{D} \ } \YY_{\infty} \sim \pi $ as $t \rightarrow \infty$. 

\begin{proposition}
\label{proposition:nar-equiv-identifiability}
If $\theta_2 > 0$ such that $\theta_2 > |\theta_1|$, then $\Q(\boldTheta) \in \Ss^{++}$.
\end{proposition}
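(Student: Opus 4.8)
The plan is to work directly with the spectrum of $\Q(\boldTheta)$, since, as recorded in the Remark preceding the statement, membership in $\Ss^{++}$ is equivalent to $\determinant(e^{-t\Q}) \to 0$ as $t\to\infty$, i.e.\ to the requirement that every eigenvalue of $\Q$ have strictly positive real part. From \eqref{eq:Q-using-theta} we have $\Q(\boldTheta) = \theta_2 \Id + \theta_1 \An$, so the spectrum of $\Q(\boldTheta)$ is exactly $\{\theta_2 + \theta_1 \mu : \mu \in \operatorname{spec}(\An)\}$. Hence the whole problem reduces to locating the spectrum of the row-normalised adjacency matrix $\An$, and the statement will follow once I show that $|\mu|\le 1$ for every eigenvalue $\mu$ of $\An$.

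First I would bound the spectral radius of $\An$ by $1$. By construction $\An = \diag(n_1^{-1},\dots,n_d^{-1})\A$ has non-negative entries, and its $i$-th absolute row sum is $n_i^{-1}\sum_{j\neq i} a_{ij}$. Because $n_i = 1 \vee \sum_{j\neq i} a_{ij}$ and $a_{ii}=0$, this row sum equals $1$ when node $i$ has at least one neighbour and equals $0$ when node $i$ is isolated; in either case it is at most $1$. Therefore $\|\An\|_\infty = \max_i \sum_j |(\An)_{ij}| \le 1$, and Gershgorin's circle theorem places every eigenvalue $\mu$ of $\An$ inside the union of discs centred at the diagonal entries (all zero) with radii equal to the absolute row sums, giving $|\mu|\le 1$.

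Combining the two observations finishes the argument: for any eigenvalue $\mu$ of $\An$,
$$
\operatorname{Re}(\theta_2 + \theta_1 \mu) = \theta_2 + \theta_1 \operatorname{Re}(\mu) \;\ge\; \theta_2 - |\theta_1|\,|\operatorname{Re}(\mu)| \;\ge\; \theta_2 - |\theta_1|\,|\mu| \;\ge\; \theta_2 - |\theta_1| \;>\; 0,
$$
where the last inequality is precisely the hypothesis $\theta_2 > |\theta_1|$ (and $\theta_2>0$ guarantees the chain is non-vacuous). Thus every eigenvalue of $\Q(\boldTheta)$ has strictly positive real part, so $\Q(\boldTheta)\in\Ss^{++}$.

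The only real subtlety worth flagging is that $\An$ is generally \emph{not} symmetric, even when $\A$ is, since the row normalisation $\diag(n_1^{-1},\dots,n_d^{-1})$ destroys symmetry; consequently one cannot invoke an orthogonal diagonalisation or argue via a real spectrum. The robust substitute is the Gershgorin/substochastic bound $\rho(\An)\le\|\An\|_\infty\le 1$, which controls complex eigenvalues uniformly and is the crux of the proof. Everything else is the elementary eigenvalue-shift identity for $\theta_2\Id + \theta_1\An$.
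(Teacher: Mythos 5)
Your proof is correct and is essentially the paper's argument: both rest on Ger\v{s}gorin's circle theorem combined with the fact that row-normalisation forces the absolute row sums of $\An$ to be at most $1$, so that the hypothesis $\theta_2 > |\theta_1|$ pushes the spectrum into the open right half-plane. The only cosmetic difference is that you apply Ger\v{s}gorin to $\An$ and then shift the spectrum via $\mu \mapsto \theta_2 + \theta_1\mu$, whereas the paper applies it directly to $\Q(\boldTheta)$ (whose diagonal entries are $\theta_2$ and whose off-diagonal row sums are at most $|\theta_1|$); your explicit handling of possibly complex eigenvalues of the non-symmetric $\An$ is a welcome clarification of what the paper states somewhat loosely as "all eigenvalues are strictly positive."
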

\begin{proof}
Recall that the Ger\v{s}gorin's circle theorem states that any eigenvalue of $\Q(\boldTheta)$ is found in a closed circle of centre $Q_{ii} = \theta_2$  and radius equal to the sum of non-diagonal entries of the $i$-th row $\sum_{j\neq i}|Q_{ij}|$ for some $i\in\{1,\dots,d\}$. 
Since $\An$ is row-normalised and given that $\theta_2 > |\theta_1|$, we conclude that the eigenvalues must be in an open disk with center $\theta_2$ and radius $|\theta_1|$. This disk is positioned in the positive half-plane and \myRed{does} not contain the origin. This yields that all eigenvalues are strictly positive.
%
\end{proof}

Proposition \ref{proposition:nar-equiv-identifiability} makes practical sense as it requires that the auto-regressive part of the model is predominant in absolute terms. For the $\boldPsi$-GrOU formulation, we obtain the following:
\begin{proposition}
\label{proposition:node-level-psi-identiability}
If we have
$$ \psi_{i(d-1) +i} > 0 \quad \text{and} \quad \psi_{d(i-1)+i} > n_i^{-1}\sum_{j\neq i}|\psi_{d(j-1)+i}| \quad \text{for any $i \in \{1,\dots,d\}$,}$$
then $\Q(\boldPsi) \in \Ss^{++}$. This means that $\vectorise^{-1}(\boldPsi) \in \MdR$ has positive diagonal elements.
\end{proposition}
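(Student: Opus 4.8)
The plan is to mirror the proof of Proposition~\ref{proposition:nar-equiv-identifiability}: localise the eigenvalues of $\Q(\boldPsi)$ via Ger\v{s}gorin's circle theorem, the only genuinely new ingredient being the bookkeeping of how the Hadamard product $(\Id + \An) \odot \vectorise^{-1}(\boldPsi)$ acts on the diagonal versus the off-diagonal entries. First I would read off the entries of $\Q(\boldPsi)$ from \eqref{eq:Q-using-psi} together with the displayed form of $\vectorise^{-1}(\boldPsi)$, which shows that $\bigl(\vectorise^{-1}(\boldPsi)\bigr)_{ij} = \psi_{d(j-1)+i}$. Since $a_{ii}=0$ forces $\An$ to have a zero diagonal, the factor $(\Id + \An)$ contributes $1$ on the diagonal and $n_i^{-1}a_{ij}$ off the diagonal, so that
\begin{equation*}
Q_{ii} = \psi_{d(i-1)+i}, \qquad Q_{ij} = n_i^{-1} a_{ij}\,\psi_{d(j-1)+i} \quad (i \neq j).
\end{equation*}

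Next I would apply Ger\v{s}gorin's theorem: every eigenvalue of $\Q(\boldPsi)$ lies in the union, over $i \in \{1,\dots,d\}$, of the closed disks centred at $Q_{ii} = \psi_{d(i-1)+i}$ with radius $R_i := \sum_{j\neq i}|Q_{ij}| = n_i^{-1}\sum_{j\neq i} a_{ij}\,|\psi_{d(j-1)+i}|$. The assumed strict diagonal-dominance condition gives $R_i \le n_i^{-1}\sum_{j\neq i}|\psi_{d(j-1)+i}| < \psi_{d(i-1)+i} = Q_{ii}$, where the first inequality uses $a_{ij}\in\{0,1\}$; in particular each centre $Q_{ii}$ is strictly positive, being bounded below by the nonnegative radius. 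Hence each Ger\v{s}gorin disk is contained in the open right half-plane $\{z \in \C : \operatorname{Re}(z) > 0\}$, so every eigenvalue of $\Q(\boldPsi)$ has strictly positive real part and $\Q(\boldPsi) \in \Ss^{++}$. The final claim is then immediate, since $\bigl(\vectorise^{-1}(\boldPsi)\bigr)_{ii} = \psi_{d(i-1)+i} = Q_{ii} > 0$.

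The step requiring the most care is not the eigenvalue localisation—which repeats the $\boldTheta$ case almost verbatim—but rather the transcription of the entries of $\Q(\boldPsi)$ and the reconciliation of the two slightly different sums appearing in the statement and in the Ger\v{s}gorin radius: the hypothesis is phrased with the unweighted sum $\sum_{j\neq i}|\psi_{d(j-1)+i}|$, whereas the radius involves the adjacency-weighted sum $\sum_{j\neq i} a_{ij}|\psi_{d(j-1)+i}|$, the gap being harmless precisely because $a_{ij}\in\{0,1\}$ makes the stated hypothesis (possibly strictly) stronger than what Ger\v{s}gorin requires. I would also flag, as in Proposition~\ref{proposition:nar-equiv-identifiability}, the mild abuse whereby $\Ss^{++}$ is applied to the generally non-symmetric matrix $\Q(\boldPsi)$ to mean simply that all eigenvalues have positive real part, which is the property actually invoked for the stationary solution~\eqref{eq:stationary-solution-vec}.
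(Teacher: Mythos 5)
Your proof is correct and takes essentially the same approach as the paper's: both read off the entries of $\Q(\boldPsi)$, bound the Ger\v{s}gorin radius $\sum_{j\neq i}|Q_{ij}| = n_i^{-1}\sum_{j\neq i}a_{ij}|\psi_{d(j-1)+i}|$ by the unweighted sum using $a_{ij}\in\{0,1\}$, and conclude by strict diagonal dominance exactly as in Proposition~\ref{proposition:nar-equiv-identifiability}. Your extra bookkeeping of the Hadamard-product entries and your remark that $\Ss^{++}$ is being used for a non-symmetric matrix to mean all eigenvalues have positive real part simply make explicit what the paper's two-line proof leaves implicit.
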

\begin{proof}
	The absolute sum of the off-diagonal elements of the $i$-th row of $\Q(\boldPsi)$ give 
	$$\sum_{j \neq i}|Q_{ij}| = n_{i}^{-1}\sum_{j \neq i} a_{ij}|\psi_{d(j-1)+i}| \leq  n_{i}^{-1}\sum_{j \neq i} |\psi_{d(j-1)+i}| < |\psi_{d(i-1)+i}|.$$
	Similarly to the proof of Proposition \ref{proposition:nar-equiv-identifiability}, applying Ger\v{s}gorin's circle theorem yields the expected result.
\end{proof}
\begin{remark}
\label{remark:var}
	The model defined in \eqref{eq:sde} is a generalisation of the (discrete-time) Vector Autoregressive (VAR) model \citep{sims1980macroeconomics} with a depth of 1. \myRed{For a step size $\Delta > 0$, we then denote the sampled process by $\XX_j:=(X_j^{(1)},\ldots, X_j^{(d)})^{\top}$ for $j\in \N$, where $X_{j}^{(k)}=Y_{j \Delta}^{(k)}$. Hence we have the \textnormal{VAR(1)}-representation
\begin{align*}
\XX_j = {\boldsymbol \Phi} \XX_{j-1} + \ZZ_j,\quad j\in \N,
\end{align*}
where the parameter matrix ${\boldsymbol \Phi}$ is given by
\begin{align*}
{\boldsymbol \Phi}=e^{-\Delta\Q}=e^{-\Delta(\theta_1 \An + \theta_2 \Id)}=e^{-\Delta\theta_1 \An}e^{-\Delta\theta_2 \Id},
\end{align*} 
and an i.i.d.\ noise sequence given by
\begin{align*}
\ZZ_j =\int_{(j-1)\Delta}^{j\Delta} e^{-(j\Delta-u)\Q}d\LL_u.
\end{align*}}
\end{remark}
\myRed{This VAR-like formulation with a fixed step size $\Delta$ is studied in \cite{fasen2013} where one estimates $e^{-\Delta \Q}$ as a $d \times d$ matrix directly. However, a strong identifiability issue hinders the estimation of $\Q$ from $e^{-\Delta \Q}$: the logarithm of a matrix (or log-matrix) is not necessarily itself a real matrix and, if so, it may be not unique \citep[p.\ 1146]{culver1966logmatrix}. The former holds if and only if $e^{-\Delta \Q}$ is nonsingular and each elementary divisor (Jordan block) of $e^{-\Delta \Q}$ belonging to a negative eigenvalue occurs an even number of times \citep[Th.\ 1]{culver1966logmatrix}. Since $e^{-\Delta \Q}$ has only positive eigenvalues, a real-valued log-matrix always exist. On the other hand, the uniqueness is more difficult to ascertain. It requires that all the eigenvalues of $e^{-\Delta \Q}$ are positive and real and that no elementary divisor (or Jordan block) of $e^{-\Delta \Q}$ belonging to any eigenvalue appears more than once \citep[Th.\ 2]{culver1966logmatrix}. Those conditions are difficult to check in practice: the eigenvalues $\lambda$ of $\Q$ have positive real parts but may have non-zero imaginary parts such that $e^{-\Delta\lambda}$ may be non-real. Also, the Jordan block assumption does not hold in general for all $\Q \in \Ss^{++}$. With this issue in mind, we introduce a fully-explicit likelihood function  in Section \ref{section:likelihood-and-estimators}; whose logarithm is well-defined as a real-valued positive function.} 

\section{Likelihood and estimators}
\label{section:likelihood-and-estimators}
Suppose we observe the process $\YY$ continuously on $[0,T]$ for $T\in \R\cup\{\infty\}$ and let $t \in [0,T]$. In this section, we present the likelihood framework of interest and derive closed-form formulas for the $\boldTheta$-GrOU and $\boldPsi$-GrOU MLEs. 
\subsection{Ergodicity}
Recall the ergodic theorem of a process $(\YY_t,\ t \in \R)$ satisfying Equation \eqref{eq:sde} is given by:
 \begin{proposition}{(Theorems 2.1 \& 2.6, \cite{masuda2007ergodicity})\\}
\label{proposition:ergodicity-vector}
Suppose that Assumptions \ref{assumption:A-known} and \ref{assumption:strong-solution-vector} hold. Then $(\YY_t, \ t \in \R)$ admits a unique invariant distribution $\pi$ for any choice of the law $\eta$ of the initial value $\YY_0$. Moreover, for any measurable function $g:\R^d \mapsto \R^{k}$ satisfying $\mathbb{E}\left[\|g(\YY_\infty)\|\right] := \int_{\R^d}\|g(y)\|\pi(dy) < \infty$ and for any $\eta$, we have 

\begin{equation}
    \label{eq:ergodicity-vector}
    \frac{1}{t}\int_0^t g(\YY_s)ds \xrightarrow{t \rightarrow \infty} \mathbb{E}\left[g(\YY_\infty)\right] := \int_{\R^d}g(y)\pi(dy), \qquad \proba_\eta-a.s.
\end{equation}
where $\proba_\eta$ is the law of $\YY$ associated with the initial value $\YY_0 \sim \eta$.
\end{proposition}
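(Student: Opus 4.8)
The plan is to regard $\YY$ as a temporally homogeneous (Feller) Markov process and to combine an explicit construction of its invariant law with an ergodic theorem for such processes. First I would settle existence and uniqueness of the invariant distribution $\pi$. Letting $s \to -\infty$ in the stationary solution \eqref{eq:stationary-solution-vec}, the natural candidate for the stationary variable is $\YY_\infty = \int_{-\infty}^{t} e^{-(t-u)\Q}\,d\LL_u$. Since $\Q \in \Ss^{++}$, the operator norm $\|e^{-(t-u)\Q}\|$ decays exponentially as $u \to -\infty$, and the log moment condition of Assumption \ref{assumption:strong-solution-vector} is precisely the integrability criterion guaranteeing almost sure convergence of this improper stochastic integral; its law is stationary and is characterised by the limiting triplet $(\boldB_\infty, C_\infty, \nu_\infty)$ recorded above. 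Uniqueness follows from the contraction $e^{-t\Q} \to 0$: starting \eqref{eq:stationary-solution-vec} from any invariant law and sending $s \to -\infty$ annihilates the memory term $e^{-(t-s)\Q}\YY_s$ in probability, forcing that law to coincide with the distribution of $\YY_\infty$.

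Next I would establish ergodicity of the stationary process and invoke Birkhoff's theorem. The representation \eqref{eq:stationary-solution-vec} exposes the mixing structure directly: for $t \geq s$ one has $\YY_t = e^{-(t-s)\Q}\YY_s + R_{s,t}$, where the remainder $R_{s,t} = \int_s^t e^{-(t-u)\Q}\,d\LL_u$ is independent of $\mathcal{F}_s$. As $t-s \to \infty$ the memory term vanishes almost surely, so $\YY_t$ becomes asymptotically independent of $\mathcal{F}_s$; this yields strong mixing of the stationary process and hence triviality of the shift-invariant $\sigma$-field under $\proba_\pi$. The continuous-time Birkhoff ergodic theorem then gives, for any $g$ with $\mathbb{E}[\|g(\YY_\infty)\|] < \infty$, that $\tfrac{1}{t}\int_0^t g(\YY_s)\,ds \to \mathbb{E}[g(\YY_\infty)]$ holds $\proba_\pi$-almost surely.

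The remaining and most delicate step is to upgrade this almost sure convergence from the stationary law to an \emph{arbitrary} initial law $\eta$, which is what \eqref{eq:ergodicity-vector} asserts. I would couple the process started from $\eta$ with the stationary one driven by the same noise $\LL$; their difference is $e^{-t\Q}(\YY_0 - \YY_0^{\pi}) \to 0$ almost surely. For bounded continuous $g$ this asymptotic-stationarity argument transfers the limit, but for a merely $\pi$-integrable $g$ the coupling alone does not suffice, and this is the main obstacle. The clean route is to verify a Foster--Lyapunov drift condition (for instance with the test function $V(y) = 1 + \|y\|$, using the log moment bound) together with irreducibility of a skeleton chain, which delivers positive Harris recurrence in the spirit of Meyn--Tweedie; the strong law of large numbers for additive functionals of positive Harris recurrent processes then yields \eqref{eq:ergodicity-vector} for every initial law $\eta$ and every $\pi$-integrable $g$. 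Proving the irreducibility and the drift inequality for the Lévy-driven generator under only the log moment hypothesis is where the real work lies.
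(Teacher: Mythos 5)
The paper never proves this proposition at all: it is quoted directly from Theorems 2.1 and 2.6 of \cite{masuda2007ergodicity}, so your attempt can only be compared against the strategy of that cited work. Your skeleton does follow the same route: construct $\pi$ as the law of the improper integral $\int_{-\infty}^{t}e^{-(t-u)\Q}d\LL_u$ (convergent exactly under the log-moment condition of Assumption \ref{assumption:strong-solution-vector}), get uniqueness from the contraction $e^{-t\Q}\rightarrow 0$, apply Birkhoff's theorem to the stationary version, and upgrade to arbitrary initial laws via positive Harris recurrence in the Meyn--Tweedie framework. You are also right that the last step is the real content: for merely measurable, $\pi$-integrable $g$, the coupling argument collapses, and the claim ``for any $\eta$'' is precisely the strong law for additive functionals of positive Harris recurrent processes.

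As a proof, however, the proposal is incomplete, and at one concrete point it is wrong. First, you explicitly defer the drift inequality and the irreducibility of a skeleton chain (``where the real work lies''); these are not finishing touches but the entire substance of the cited theorems, so what you have is a plan rather than a proof. Second, your proposed test function $V(y)=1+\|y\|$ cannot satisfy a Foster--Lyapunov inequality under Assumption \ref{assumption:strong-solution-vector} alone: the large-jump part of the generator applied to $V$ involves $\int_{\|\boldsymbol{z}\|>1}\left(\|y+\boldsymbol{z}\|-\|y\|\right)\nu(d\boldsymbol{z})$, and since $\|y+\boldsymbol{z}\|-\|y\|\geq \|\boldsymbol{z}\|-2\|y\|$, this integral is $+\infty$ for every $y$ whenever $\int_{\|\boldsymbol{z}\|>1}\|\boldsymbol{z}\|\nu(d\boldsymbol{z})=\infty$, which the log-moment condition permits. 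One must instead use a slowly growing Lyapunov function, e.g.\ $V(y)\asymp\log\|y\|$ for large $\|y\|$, whose increments $\log(1+\|y+\boldsymbol{z}\|)-\log(1+\|y\|)\leq\log(1+\|\boldsymbol{z}\|)$ are $\nu$-integrable on $\{\|\boldsymbol{z}\|>1\}$ precisely by the log-moment hypothesis; making that drift computation rigorous is the delicate part you have skipped. Finally, the irreducibility half of your gap is actually free in this paper's setting: the \levy\ triplet is assumed to have $\Si\in\Ss^{++}$, so the Gaussian component is non-degenerate, transition kernels are absolutely continuous with full support, and every skeleton chain is $\lambda^{leb}$-irreducible; noting this would have closed that portion of the argument.
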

According to Section \ref{section:stationary-solution}, recall that $\YY_t \xrightarrow{\ \mathcal{D} \ }\YY_\infty \sim \pi$ as $t \rightarrow \infty$ where $\pi$ is the invariant distribution characterised by $(\boldB_\infty, C_\infty, \nu_\infty)$.
This result is essential for the asymptotic properties of the model statistical inference and was further extended in \cite{Sandric2016, Kevei2018}.
We present the general likelihood framework used for GrOU processes.

\subsection{The fully-explicit likelihood function}
To infer the model parameters, we set up an explicit likelihood function to be maximised. For a general positive definite dynamics matrix $\Q$, the Radon-Nikodym derivative of the corresponding $d$-dimensional Ornstein-Uhlenbeck process (Eq.\ \ref{eq:sde}) 
is expressed as follows \citep{pap1996parameter, mai2014efficient}:
\begin{equation}
    \label{eq:radon-nikodym-derivative}
    \frac{dP_{t,\YY}}{d\proba_{t,0}} = \exp\left\{-\int_0^t \langle \rmQ \YY_{s}, d\YY^c_s\rangle_{\Si} - \frac{1}{2}\int_0^t\langle \rmQ \YY_s, \rmQ \YY_s\rangle_{\Si} ds \right\}, \quad t \in [0,T],
\end{equation}
where $\YY^c_s$ is the continuous $\proba_{0}$-martingale part and $\proba_{\YY}$ is a probability measure equivalent to $\proba_{0}$ such that the process in Equation \eqref{eq:radon-nikodym-derivative} is a martingale. Similarly to $\proba_{t,0}$, $\proba_{t,\YY}$ is the restriction of $\proba_{\YY}$ on $\mathcal{F}_t$ for any $t \in \R$.  We note that, since $\Si \in \Ss^{++}$, $\Si$ is invertible and hence Equation \eqref{eq:radon-nikodym-derivative} is well-defined. 

\begin{remark}
	In the rest of the article, we write $t=T$ (i.e.\ $t$ is the time horizon itself) and consider the likelihood at time $t$ directly.
\end{remark}

\subsection{The case of the $\theta$-GrOU process}
In this section, we write the likelihood for $\boldTheta$-GrOU processes and obtain the corresponding maximum likelihood estimator along with its existence and uniqueness. Consider the notations:
\begin{notation}
\label{notation:H-and-C}
Define the deterministic positive definite matrix
$$\boldsymbol{G}_\infty :=
\begin{pmatrix}
\E\left( \langle \An \YY_\infty, \An \YY_\infty \rangle_{\Si} \right) & \E\left(  \langle \An \YY_\infty,  \YY_\infty \rangle_{\Si} \right)\\
\E\left( \langle \An \YY_\infty,  \YY_\infty \rangle_{\Si} \right) & \E\left( \langle  \YY_\infty,  \YY_\infty \rangle_{\Si} \right)
\end{pmatrix},$$
and define
$$
\HH_t := - \begin{pmatrix}
\int_{0}^t\langle \An\YY_{s}, d\YY^c_s \rangle_\Si \\
\int_{0}^t\langle \YY_{s}, d\YY^c_s \rangle_\Si
\end{pmatrix} \ \text{such that} \ [\HH]_t = \begin{pmatrix}
		\int_0^t\langle \An \YY_{s},\An\YY_{s} \rangle_\Si ds & \int_0^t\langle \An\YY_{s},\YY_{s} \rangle_\Si ds\\
		 \int_0^t\langle \An\YY_{s},\YY_{s} \rangle_\Si ds & \int_0^t\langle \YY_{s},\YY_{s} \rangle_\Si ds
	\end{pmatrix}
.$$
\end{notation}
Using Equation \eqref{eq:Q-using-theta}, we deduce that
\begin{equation}
\label{eq:laplace-theta-likelihood}
	\int_0^t \langle \Q(\boldTheta) \YY_s, \Q(\boldTheta) \YY_s\rangle_{\Si} ds = \boldsymbol{\theta}^\top \cdot 
	[\HH]_t \cdot \boldsymbol{\theta}.
\end{equation}

\begin{lemma}
\label{lemma:HH_t-limit}
Suppose that Assumptions \ref{assumption:A-known} and \ref{assumption:strong-solution-vector} hold and that $\YY$ has finite second moments. Then, we have that $t^{-1}[\HH]_t$ converges almost surely to 
$\boldsymbol{G}_\infty$ as $t \rightarrow \infty$. Therefore, $[\HH]_t=O(t)$ componentwise as $t\rightarrow\infty$ and we obtain that $\E\left(|[\HH]_t|\right) \rightarrow \infty$ componentwise as $t \rightarrow \infty$.	
\end{lemma}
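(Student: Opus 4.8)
The plan is to reduce the matrix statement to four scalar ergodic limits, one per entry of $[\HH]_t$, and then invoke the ergodic theorem of Proposition \ref{proposition:ergodicity-vector}. Each entry of $[\HH]_t$ has the form $\int_0^t g_{M_1,M_2}(\YY_s)\,ds$, where $g_{M_1,M_2}(y) := \langle M_1 y, M_2 y\rangle_\Si = (M_1 y)^\top \Si^{-1}(M_2 y)$ with $M_1, M_2 \in \{\An, \Id\}$. First I would fix a pair $(M_1,M_2)$ and treat the quadratic form $g_{M_1,M_2}\colon \R^d \to \R$ as the measurable test function appearing in that proposition.

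The key verification is the integrability hypothesis $\E[|g_{M_1,M_2}(\YY_\infty)|] < \infty$. Here I would use Cauchy--Schwarz together with submultiplicativity of the operator norm to bound $|g_{M_1,M_2}(y)| \leq \|M_1\|\,\|M_2\|\,\|\Si^{-1}\|\,\|y\|^2$, so that $\E[|g_{M_1,M_2}(\YY_\infty)|] \leq \|M_1\|\|M_2\|\|\Si^{-1}\|\,\E[\|\YY_\infty\|^2]$, which is finite precisely by the finite-second-moment assumption, using that $\Si \in \Ss^{++}$ makes $\Si^{-1}$ a bounded operator. Applying Proposition \ref{proposition:ergodicity-vector} to each of the four choices of $(M_1,M_2)$ then gives $t^{-1}\int_0^t g_{M_1,M_2}(\YY_s)\,ds \to \E[g_{M_1,M_2}(\YY_\infty)]$ almost surely, and collecting the four limits into a matrix yields $t^{-1}[\HH]_t \to \boldsymbol{G}_\infty$ a.s. Since the limit is finite, $t^{-1}[\HH]_t = O(1)$, i.e. $[\HH]_t = O(t)$ componentwise.

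For the final claim $\E(|[\HH]_t|)\to\infty$ componentwise, I would compute the expectation directly rather than passing through the almost-sure limit. By Tonelli (the diagonal integrands are non-negative) and the strict stationarity of $\YY$ guaranteed by Assumption \ref{assumption:strong-solution-vector}, the diagonal entries satisfy $\E[\int_0^t g_{M,M}(\YY_s)\,ds] = t\,\E[g_{M,M}(\YY_\infty)]$, and $\E[g_{M,M}(\YY_\infty)] = \E[(M\YY_\infty)^\top\Si^{-1}(M\YY_\infty)] > 0$ because $\Si^{-1}$ is positive definite; hence the diagonal expectations grow linearly and diverge. For the off-diagonal entry, Fubini gives $\E[[\HH]_t^{12}] = t\,\E[\langle\An\YY_\infty,\YY_\infty\rangle_\Si]$, so whenever this mean is nonzero the triangle inequality $\E|[\HH]_t^{12}| \geq |\E[[\HH]_t^{12}]| = t\,|\E[\langle\An\YY_\infty,\YY_\infty\rangle_\Si]|$ forces divergence as well.

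I expect the only genuinely delicate point to be the integrability check feeding into Proposition \ref{proposition:ergodicity-vector}: everything downstream is routine once the quadratic test functions are shown to be $\pi$-integrable, which is exactly where the finite-second-moment hypothesis and the boundedness of $\Si^{-1}$ (from $\Si \in \Ss^{++}$) are used. The statement $\E(|[\HH]_t|)\to\infty$ is essentially an identifiability remark, and its only subtlety is the non-degenerate case of a nonvanishing off-diagonal stationary mean; the positive-definiteness of $\boldsymbol{G}_\infty$ recorded in Notation \ref{notation:H-and-C} already guarantees strictly positive diagonal means, which suffices for the growth of $[\HH]_t$.
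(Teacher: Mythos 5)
Your proof is correct and takes essentially the same route as the paper's: entrywise application of the ergodic theorem (Proposition \ref{proposition:ergodicity-vector}) with quadratic test functions whose $\pi$-integrability follows from the finite-second-moment assumption, then Fubini/stationarity to compute $\E\left([\HH]_t\right) = t\,\boldsymbol{G}_\infty$ combined with Jensen's inequality $\E\left(|[\HH]_t|\right) \geq |\E\left([\HH]_t\right)|$ for the divergence claim. If anything you are more careful than the paper, which silently treats all entries of $\boldsymbol{G}_\infty$ as nonzero when concluding componentwise divergence, whereas you correctly isolate the off-diagonal case $\E\left[\langle \An\YY_\infty,\YY_\infty\rangle_\Si\right]=0$ as the one point the Jensen argument does not cover.
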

\begin{proof}
By stationarity and ergodicity, $t^{-1}[\HH]_t$ converges almost surely to $\boldsymbol{G}_\infty$ as $t \rightarrow \infty$. This matrix has finite elements since $\YY$ has finite second moments. By Jensen's inequality, we obtain $\E\left(|\HH_t|\right) \geq |\E\left([\HH]_t\right)|$. By Fubini's theorem and stationarity we have that $\E\left([\HH]_t\right) = \int_0^t \boldsymbol{G}_\infty dt = O(t)$ componentwise as $t \rightarrow \infty$ and hence $\E\left(|[\HH]_t|\right) \rightarrow \infty$ componentwise as $t \rightarrow \infty$.
\end{proof}

We then set up a likelihood framework (in the sense of Section 2, \cite{morales2000renyi}) for the $\boldTheta$-GrOU process as follows:
\begin{proposition}
\label{proposition:radon-nikodym}
From Equation \eqref{eq:radon-nikodym-derivative}, we obtain the likelihood ratio $\mathcal{L}_t(\boldsymbol{\theta}; \YY)$:
$$
\mathcal{L}_t(\boldsymbol{\theta}; \YY) := \frac{dP_{t,\YY}}{d\proba_{t,0}} = \exp \left(\boldsymbol{\theta}^{\top}\HH_t - \frac{1}{2}  \boldsymbol{\theta}^\top \cdot [\HH]_t \cdot  \boldsymbol{\theta}\right), \quad \boldTheta \in \R^2.  
$$
Additionally, $t \mapsto \HH_t$ and $t \mapsto \boldsymbol{\theta}^\top \cdot [\HH]_t \cdot  \boldsymbol{\theta}$ (for a fixed $\boldsymbol{\theta})$ are c\`adl\`ag hence bounded and $\mathcal{F}_t$-measurable for any finite $t \in R$.
\end{proposition}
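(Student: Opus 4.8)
The plan is to substitute the parametrisation \eqref{eq:Q-using-theta} directly into the Radon--Nikodym derivative \eqref{eq:radon-nikodym-derivative} and to recognise the two resulting terms as $\boldsymbol{\theta}^\top \HH_t$ and $\tfrac{1}{2}\boldsymbol{\theta}^\top[\HH]_t\boldsymbol{\theta}$. Since $\Q(\boldTheta)=\theta_2 \Id + \theta_1 \An$ is linear in $\boldTheta$ and the pairing $\langle \cdot,\cdot\rangle_{\Si}$ is linear in its first argument, the exponent is affine-quadratic in $\boldTheta$, which is precisely the form claimed; the whole identity is therefore an exercise in linearity and bookkeeping rather than a substantive computation.

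For the linear (stochastic integral) term I would expand $\Q(\boldTheta)\YY_s=\theta_2 \YY_s+\theta_1 \An\YY_s$ and use linearity of both $\langle\cdot,\cdot\rangle_{\Si}$ and of the stochastic integral against the continuous martingale part $\YY^c$ to write
\begin{equation*}
-\int_0^t \langle \Q(\boldTheta)\YY_s, d\YY^c_s\rangle_{\Si}
= -\theta_1\int_0^t\langle \An\YY_s, d\YY^c_s\rangle_{\Si}
-\theta_2\int_0^t\langle \YY_s, d\YY^c_s\rangle_{\Si}
= \boldsymbol{\theta}^\top \HH_t,
\end{equation*}
where the last equality is just the definition of $\HH_t$ in Notation~\ref{notation:H-and-C} with $\boldsymbol{\theta}=(\theta_1,\theta_2)^\top$. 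For the quadratic term I would invoke \eqref{eq:laplace-theta-likelihood}, which already records $\int_0^t\langle\Q(\boldTheta)\YY_s,\Q(\boldTheta)\YY_s\rangle_{\Si}ds=\boldsymbol{\theta}^\top[\HH]_t\boldsymbol{\theta}$; combining the two gives the stated closed form. Should one wish to verify \eqref{eq:laplace-theta-likelihood} in place, it follows by expanding the symmetric bilinear form $\langle\cdot,\cdot\rangle_{\Si}$ into its $\theta_1^2$, $2\theta_1\theta_2$, and $\theta_2^2$ contributions and matching them against the three entries of $[\HH]_t$ in Notation~\ref{notation:H-and-C}.

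For the regularity claims I would argue as follows. The two components of $\HH_t$ are stochastic integrals of locally square-integrable predictable integrands against the continuous local martingale $\YY^c$, and such integrals admit almost surely continuous sample paths; the scalar $\boldsymbol{\theta}^\top[\HH]_t\boldsymbol{\theta}$ is a Lebesgue integral of a c\`adl\`ag (hence locally bounded) integrand and so is continuous in $t$. Continuity implies the c\`adl\`ag property, and a c\`adl\`ag function restricted to a compact interval $[0,t]$ is bounded; finally both processes are adapted (stochastic and Lebesgue integrals of adapted integrands being adapted), which yields $\mathcal{F}_t$-measurability for every finite $t$.

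The main obstacle, such as it is, is not the algebra but the justification that the integrands $\An\YY_s$ and $\YY_s$ lie in the domain of the stochastic integral against $\YY^c$: this is guaranteed by the finite-second-moment and stationarity hypotheses, through which Lemma~\ref{lemma:HH_t-limit} controls $[\HH]_t$, together with the fact that $\YY^c$ is well-defined as the continuous $\proba_0$-martingale part already appearing in \eqref{eq:radon-nikodym-derivative}. Once these integrability conditions are in hand, no genuine difficulty remains.
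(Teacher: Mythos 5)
Your proposal is correct and takes essentially the same route as the paper's own proof: both substitute $\Q(\boldTheta)=\theta_2\Id+\theta_1\An$ into \eqref{eq:radon-nikodym-derivative}, identify the quadratic term via \eqref{eq:laplace-theta-likelihood}, rely on Lemma \ref{lemma:HH_t-limit} (the finite-second-moment/stationarity hypothesis) for finiteness of $\HH_t$ and $[\HH]_t$, and deduce the c\`adl\`ag and measurability claims from standard properties of the stochastic integral against the continuous martingale part $\YY^c$. If anything, you spell out the linearity bookkeeping and the boundedness-on-compacts argument more explicitly than the paper does.
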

\begin{proof}
\myRed{According to Lemma \ref{lemma:HH_t-limit} and by continuity, $\HH$ and $[\HH]$ are finite for any $t \geq t_0$ where $t_0$ is such that both $\HH_{t_0}$ and $[\HH]_{t_0}$ are finite. Also, $t \mapsto [\HH]_t$ has a linear growth as $t \rightarrow \infty$.} Therefore, the likelihood is defined on $\boldsymbol{\Theta} := \{\boldsymbol{\theta} \in \R^2 : |\boldsymbol{\theta}^\top \cdot [\HH]_t \cdot  \boldsymbol{\theta}| < \infty, \ \forall t \geq 0 \} = \R^2$. By the properties of the stochastic integral with respect to the continuous martingale part of $\YY$, $t \mapsto \HH_t$ is indeed c\`adl\`ag.
\end{proof}

From Proposition \ref{proposition:nar-equiv-identifiability}, we define a compact set $\widehat{\boldsymbol{\Theta}}$ such that
$$\widehat{\boldsymbol{\Theta}} \subseteq \{(\theta_1, \theta_2)^\top \in \R^2: \theta_2 > 0 \text{ and } \theta_2 > |\theta_1|\}.$$ 

 We state the main result on the existence and uniqueness of the continuous-time Maximum Likelihood Estimator (MLE) using Notation \ref{notation:H-and-C}:

\begin{theorem}{($\boldTheta$-GrOU MLE with continuous-time observations)\\}
\label{theorem:mle_cts_vec}
Suppose that Assumptions \ref{assumption:A-known} and \ref{assumption:strong-solution-vector} hold. Assume that the $\boldTheta$-GrOU process $\YY$ is observed in continuous time and has finite second moments. Then, the MLE $\widehat{\boldsymbol{\theta}}_t$ on the compact set $\widehat{\boldsymbol{\Theta}}$ solves the equation $$\HH_t = [\HH]_t \cdot \widehat{\boldsymbol{\theta}}_t , \quad \text{for } t \geq 0.$$
Moreover, $\widehat{\boldsymbol{\theta}}_t$ satisfies the properties:
\begin{enumerate}
    \item We have $\det([\HH]_t) > 0 \ P_{t, \YY}-a.s.$ for $t \geq 0$ large enough and $$\widehat{\boldsymbol{\theta}}_t = [\HH]_t^{-1} \cdot \HH_t.$$
    \item The MLE $\widehat{\boldsymbol{\theta}}_t$ exists almost surely and uniquely under $\proba_{t,\YY}$.
\end{enumerate}
\end{theorem}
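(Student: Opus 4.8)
The plan is to exploit that, by Proposition \ref{proposition:radon-nikodym}, the log-likelihood is an explicit concave quadratic in the parameter, so the entire statement reduces to elementary convex optimisation together with a single ergodic-limit argument for invertibility. First I would set $\ell_t(\boldsymbol{\theta}) := \log \mathcal{L}_t(\boldsymbol{\theta}; \YY) = \boldsymbol{\theta}^\top \HH_t - \tfrac{1}{2}\boldsymbol{\theta}^\top [\HH]_t \boldsymbol{\theta}$, a smooth map on $\R^2$. Differentiating in $\boldsymbol{\theta}$ gives the score $\nabla \ell_t(\boldsymbol{\theta}) = \HH_t - [\HH]_t \boldsymbol{\theta}$ (using the symmetry of $[\HH]_t$) and the constant Hessian $-[\HH]_t$. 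Setting the score to zero is exactly the normal equation $\HH_t = [\HH]_t \widehat{\boldsymbol{\theta}}_t$, so any interior maximiser must satisfy it; this already yields the first displayed claim.

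Next I would establish the definiteness of $[\HH]_t$. For any $\boldsymbol{a} = (a_1,a_2)^\top \in \R^2$, Notation \ref{notation:H-and-C} gives $\boldsymbol{a}^\top [\HH]_t \boldsymbol{a} = \int_0^t \langle a_1 \An\YY_s + a_2 \YY_s,\, a_1\An\YY_s + a_2\YY_s\rangle_{\Si}\, ds \geq 0$, since $\Si^{-1}$ is positive definite; hence $[\HH]_t$ is positive semidefinite and $\ell_t$ is concave. To upgrade this to strict positivity I would invoke Lemma \ref{lemma:HH_t-limit}: $t^{-1}[\HH]_t \to \boldsymbol{G}_\infty$ almost surely, where $\boldsymbol{G}_\infty$ is positive definite by Notation \ref{notation:H-and-C}. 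By continuity of the determinant, $\determinant(t^{-1}[\HH]_t) \to \determinant(\boldsymbol{G}_\infty) > 0$ almost surely, so there is a (random) $t_0$ with $\determinant([\HH]_t) = t^2\, \determinant(t^{-1}[\HH]_t) > 0$ for all $t \geq t_0$. Thus $[\HH]_t$ is invertible for $t$ large enough and $\widehat{\boldsymbol{\theta}}_t = [\HH]_t^{-1}\HH_t$, which is property~1.

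For existence and uniqueness, once $[\HH]_t \succ 0$ the Hessian $-[\HH]_t$ is negative definite, so $\ell_t$ is strictly concave on $\R^2$ and admits the unique global maximiser $[\HH]_t^{-1}\HH_t$. Existence of a maximiser over the compact set $\widehat{\boldsymbol{\Theta}}$ follows from continuity of $\ell_t$, and uniqueness follows from strict concavity on the convex set $\widehat{\boldsymbol{\Theta}}$. To identify the constrained maximiser with $[\HH]_t^{-1}\HH_t$ (so that it solves the normal equation), I would argue that under $\proba_{t,\YY}$ the unconstrained maximiser is eventually interior: the ergodic theorem (Proposition \ref{proposition:ergodicity-vector}) drives $\widehat{\boldsymbol{\theta}}_t = [\HH]_t^{-1}\HH_t$ to the true parameter $\boldsymbol{\theta}_0 \in \interior(\widehat{\boldsymbol{\Theta}})$ (admissible by Proposition \ref{proposition:nar-equiv-identifiability}), so for $t$ large the global maximiser lies inside $\widehat{\boldsymbol{\Theta}}$ and is therefore the MLE, giving property~2.

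\textbf{Main obstacle.} The genuinely non-trivial ingredient is the invertibility of $[\HH]_t$, which rests on $\boldsymbol{G}_\infty$ being positive definite, i.e.\ a non-degeneracy/identifiability condition ensuring that $\An\YY_\infty$ and $\YY_\infty$ are not $\Si^{-1}$-collinear in $L^2(\pi)$. Everything else is routine: the remaining care is only in matching the constrained maximiser over $\widehat{\boldsymbol{\Theta}}$ with the unconstrained normal-equation solution, handled by consistency under $\proba_{t,\YY}$.
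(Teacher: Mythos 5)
Your overall route is more elementary than the paper's and most of it would go through: you treat the problem as bare-hands maximisation of a strictly concave quadratic, whereas the paper casts the model as an exponential family and invokes Theorem 8.2.1 of \cite{kuchler1997exponentialbook}, verifying minimality of the representation and steepness of the cumulant function $C_t(\boldTheta) = \tfrac{1}{2}\boldTheta^\top [\HH]_t \boldTheta$ before reading off existence and uniqueness. Your score computation, normal equation, and uniqueness-by-strict-concavity all match the substance of what that machinery delivers, and your treatment of the constrained maximiser (eventual interiority of the unconstrained solution) is actually more explicit than the paper's, which only remarks that convexity gives uniqueness on any compact set on which the MLE exists. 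Two caveats there: the consistency you invoke is proved in the paper \emph{after} this theorem (Proposition \ref{proposition:vec-cts-clt}) partly using it, so you must keep your self-contained ergodic sketch rather than cite consistency; and interiority requires the extra hypothesis $\boldsymbol{\theta}_0 \in \interior\,\widehat{\boldsymbol{\Theta}}$, which the theorem does not state.

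The genuine gap is exactly where you flag it, and flagging it is not the same as closing it: the positive definiteness of $\boldsymbol{G}_\infty$. You justify it ``by Notation \ref{notation:H-and-C}'', but that notation merely \emph{names} $\boldsymbol{G}_\infty$ a positive definite matrix; nothing there proves that this second-moment matrix is nonsingular, and your entire invertibility argument (continuity of the determinant along $t^{-1}[\HH]_t \rightarrow \boldsymbol{G}_\infty$) rests on it. The paper supplies the missing ingredient, and does so directly at finite $t$ rather than asymptotically: by Cauchy--Schwarz (in both its inner-product and integral forms), $\det([\HH]_t) \geq 0$, with equality only if $\An\YY_s$ and $\YY_s$ are $\Si^{-1}$-collinear for almost every $s$; collinearity is ruled out because $\diag(\An) = \boldsymbol{0}$, so the $i$-th component of $\An\YY_s$ is a combination of the neighbours $Y^{(j)}_s$, $j \neq i$, and does not involve $Y^{(i)}_s$ itself. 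This yields $\det([\HH]_t) > 0$ $P_{t,\YY}$-a.s.\ for every $t \geq 0$, which is both the fact you need and stronger than your large-$t$ conclusion. To complete your proof you should either reproduce this non-collinearity argument under the stationary law $\pi$ (thereby proving $\boldsymbol{G}_\infty$ is positive definite rather than assuming it) or adopt the paper's finite-$t$ Cauchy--Schwarz argument outright.
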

\begin{proof}
	See Appendix \ref{proof:theorem:mle_cts_vec}.
\end{proof}

This concludes the presentation of the two-parameter estimator and the MLE for the $\boldPsi$-GrOU process is defined next.

\subsection{The case of the $\psi$-GrOU process}
We focus on the $\boldPsi$-GrOU processes and define their likelihood along with the corresponding MLE. From \cite{basak2008asymptotic}, one deduces an intermediary result:
\begin{lemma}{(Adapted from Theorem 4.1, \cite{basak2008asymptotic})\\}
\label{lemma:square-kronecker-product}
Consider the $d\times d$ matrix ${\Kt}_t:=\int_0^t\YY_s \YY_s^\top ds$. This matrix is $\proba_{0}$-almost surely nonsingular for $t$ large enough in the sense that $\liminf_{t \rightarrow \infty} t^{-1}\lambda_{min}({\Kt}_t) > 0$ and we also have that $\lambda_{max}({\Kt}_t) = O(t) \ \proba_{0}-a.s.$ Here, $\lambda_{min}$ and $\lambda_{max}$ are respectively the smallest and largest eigenvalues (which are real since ${\Kt}_t$ is symmetric).
\end{lemma}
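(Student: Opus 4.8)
The plan is to reduce both assertions to the almost sure behaviour of the stationary second-moment matrix $\boldsymbol{\Gamma}_\infty := \E(\YY_\infty \YY_\infty^\top)$ and then to pass to the extreme eigenvalues by continuity. First I would apply the ergodic theorem of Proposition \ref{proposition:ergodicity-vector} to the matrix-valued map $g(\boldsymbol{y}) = \boldsymbol{y}\boldsymbol{y}^\top$, which is $\pi$-integrable precisely because $\YY$ has finite second moments, to obtain the componentwise almost sure limit $t^{-1}{\Kt}_t \to \boldsymbol{\Gamma}_\infty$ as $t \to \infty$. Since ${\Kt}_t$ is symmetric and positive semidefinite, $t^{-1}\lambda_{min}({\Kt}_t) = \lambda_{min}(t^{-1}{\Kt}_t)$ and likewise for $\lambda_{max}$; because the extreme eigenvalues of a symmetric matrix are Lipschitz in its entries (Weyl's inequality), the matrix convergence transfers to $t^{-1}\lambda_{min}({\Kt}_t) \to \lambda_{min}(\boldsymbol{\Gamma}_\infty)$ and $t^{-1}\lambda_{max}({\Kt}_t) \to \lambda_{max}(\boldsymbol{\Gamma}_\infty)$ almost surely.

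With these limits in hand, the second statement is immediate, since $\lambda_{max}(\boldsymbol{\Gamma}_\infty) < \infty$ gives $\lambda_{max}({\Kt}_t) = O(t)$. For the first statement, the $\liminf$ coincides with the genuine limit $\lambda_{min}(\boldsymbol{\Gamma}_\infty)$, so the whole problem collapses to proving that $\boldsymbol{\Gamma}_\infty \in \Ss^{++}$.

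That positive definiteness is where the real work lies, and I would extract it from the non-degeneracy of the Gaussian part of the driving noise. Writing the covariance of the invariant law as $\Cov(\YY_\infty) = C_\infty + \int_{\R^d} \boldsymbol{z}\boldsymbol{z}^\top \nu_\infty(d\boldsymbol{z})$, with $C_\infty = \int_0^\infty e^{-s\Q}\Si e^{-s\Q^\top}\,ds$ the Gaussian component of the stationary triplet $(\boldB_\infty, C_\infty, \nu_\infty)$, I would check that for any unit vector $\boldsymbol{v}$ one has $\boldsymbol{v}^\top C_\infty \boldsymbol{v} = \int_0^\infty \|\Si^{1/2} e^{-s\Q^\top}\boldsymbol{v}\|^2\,ds > 0$, because $\Si \in \Ss^{++}$ together with the invertibility of $e^{-s\Q^\top}$ makes the integrand strictly positive at $s=0$ and continuous; hence $C_\infty \in \Ss^{++}$. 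The jump contribution is positive semidefinite and $\boldsymbol{\Gamma}_\infty = \Cov(\YY_\infty) + \E(\YY_\infty)\E(\YY_\infty)^\top \succeq \Cov(\YY_\infty) \succeq C_\infty$, so $\lambda_{min}(\boldsymbol{\Gamma}_\infty) \geq \lambda_{min}(C_\infty) > 0$, which closes the argument.

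I expect the main obstacle to be exactly this lower bound: the ergodic theorem only yields $\boldsymbol{v}^\top \boldsymbol{\Gamma}_\infty \boldsymbol{v} = \Var(\boldsymbol{v}^\top \YY_\infty) + (\E(\boldsymbol{v}^\top \YY_\infty))^2$ for each fixed direction, and one must rule out the invariant law concentrating on a proper subspace uniformly over $\boldsymbol{v}$; the strictly positive definite Gaussian covariance $C_\infty$ is precisely what makes the bound uniform in $\boldsymbol{v}$ through $\lambda_{min}(C_\infty)$. Along the way I would also confirm that the integral defining $C_\infty$ converges, which follows from $\Q \in \Ss^{++}$ yielding $\|e^{-s\Q}\| \leq M e^{-\alpha s}$ for some $\alpha, M > 0$.
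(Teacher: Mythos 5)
Your proposal is correct, and its skeleton coincides with the paper's: both apply the ergodic theorem (Proposition \ref{proposition:ergodicity-vector}) to $g(\boldsymbol{y})=\boldsymbol{y}\boldsymbol{y}^\top$ to get $t^{-1}{\Kt}_t \to \E(\YY_\infty\YY_\infty^\top)$ almost surely, and then reduce everything to positive definiteness of that limit. Where you genuinely diverge is in how that positive definiteness is established. The paper asserts the identity $\E(\YY_\infty \YY_\infty^{\top}) = \int_0^\infty e^{-s\Q}\E(\LL_1 \LL_1^\top) e^{-s\Q^{\top}}ds$ and declares it positive definite; strictly speaking that formula conflates the second moment with the covariance and is exact only when the driving noise is centred (otherwise the mean contribution $\Q^{-1}\E(\LL_1)\E(\LL_1)^\top\Q^{-\top}$ sits outside the integral, not inside it), although the positive-definiteness conclusion survives because $\E(\LL_1\LL_1^\top) \succeq \Si$. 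You instead work with the stationary \levy-Khintchine triplet $(\boldB_\infty, C_\infty, \nu_\infty)$ already introduced in Section \ref{section:stationary-solution}, write $\Cov(\YY_\infty) = C_\infty + \int \boldsymbol{z}\boldsymbol{z}^\top \nu_\infty(d\boldsymbol{z})$, and lower-bound $\E(\YY_\infty\YY_\infty^\top) \succeq \Cov(\YY_\infty) \succeq C_\infty = \int_0^\infty e^{-s\Q}\Si e^{-s\Q^\top}ds \in \Ss^{++}$. This buys you two things: the argument is insensitive to whether $\LL$ (hence $\YY_\infty$) is centred, and the strict positivity is isolated in the Gaussian component, where it follows pointwise from $\Si \in \Ss^{++}$ and invertibility of $e^{-s\Q^\top}$. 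You also make explicit a step the paper leaves implicit, namely that a.s.\ convergence of the matrices transfers to the extreme eigenvalues (via Weyl/Lipschitz continuity), which is what turns the matrix limit into the stated $\liminf$ and $O(t)$ claims. In short: same architecture, but your positive-definiteness argument is the more careful and more general of the two.
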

\begin{proof}
	See Appendix \ref{proof:lemma:square-kronecker-product}.
\end{proof}

We define the equivalent matrix to $\HH_t$ as follows
\begin{definition}
\label{definition:node-level-response}
	We define the node-level integrated response vector as $\Iit_t := - \int_0^t \YY_s \otimes \Si^{-1}d\YY_s^c$ such that $[\Iit]_t := \: {\Kt}_t \otimes \Si^{-1}$ for any $t \geq 0$.
\end{definition} 
As hinted by Equation \eqref{eq:Q-using-psi}, we first derive the likelihood under unrestricted network interactions before applying the network topology (as a linear transformation of the former). The corresponding $\boldPsi$-GrOU likelihood is formulated as follows:
\begin{proposition}
	\label{proposition:node-level-likelihood} 
	We consider the dynamics $d\YY_t = - \vectorise^{-1}(\boldPsi) \cdot \YY_{t-}dt + d\LL_t$ for some general parameter $\boldPsi \in \R^{d^2}$ such that Prop.\ \ref{proposition:node-level-psi-identiability} holds. The likelihood with respect to $\boldsymbol{\psi}$ is given by
$$\mathcal{L}_t(\boldsymbol{\psi}; \YY) = \exp\left(\boldsymbol{\psi}^\top \cdot  \Iit_t - \frac{1}{2}\boldsymbol{\psi}^\top \cdot  [\Iit]_t \cdot \boldsymbol{\psi}\right), \quad \boldPsi \in \R^{d^2}.$$
\end{proposition}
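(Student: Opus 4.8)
The plan is to specialise the general Radon--Nikodym derivative \eqref{eq:radon-nikodym-derivative} to the dynamics matrix $\rmQ = \vectorise^{-1}(\boldPsi)$ and then rewrite both the stochastic-integral (linear) term and the quadratic-variation term as functions of $\boldPsi$ using Kronecker-product algebra. The whole argument reduces to two vectorisation identities; the probabilistic content (that \eqref{eq:radon-nikodym-derivative} is the correct likelihood) is inherited from the cited references and needs no further work here. Throughout I set $\boldPsi = \vectorise(\rmQ)$ and use the column-stacking convention fixed in Section \ref{sectionL:notations}.

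First I would record the key identity $\rmQ\boldsymbol{b} = (\boldsymbol{b}^\top \otimes \Id)\boldPsi$ for every $\boldsymbol{b}\in\R^d$, obtained from $\vectorise(\Id\,\rmQ\,\boldsymbol{b}) = (\boldsymbol{b}^\top \otimes \Id)\vectorise(\rmQ)$ together with the fact that a column vector equals its own vectorisation. Applying this with $\boldsymbol{b}=\YY_s$ and using that a scalar equals its transpose gives $\langle \rmQ\YY_s, d\YY_s^c\rangle_{\Si} = (d\YY_s^c)^\top \Si^{-1}(\YY_s^\top \otimes \Id)\boldPsi$. Since $\Si^{-1}$ is symmetric and the mixed-product rule $(A\otimes B)(C\otimes D) = (AC)\otimes(BD)$ yields $\Si^{-1}(\YY_s^\top \otimes \Id) = \YY_s^\top \otimes \Si^{-1}$, transposing the scalar turns the integrand into $\boldPsi^\top(\YY_s \otimes \Si^{-1}\,d\YY_s^c)$. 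Integrating and absorbing the leading minus sign of \eqref{eq:radon-nikodym-derivative} into the sign carried by the definition of $\Iit_t$ recovers exactly $\boldPsi^\top \Iit_t$, by Definition \ref{definition:node-level-response}.

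Next I would treat the quadratic term identically. Substituting $\rmQ\YY_s = (\YY_s^\top \otimes \Id)\boldPsi$ on both sides of $\langle \rmQ\YY_s, \rmQ\YY_s\rangle_{\Si} = (\rmQ\YY_s)^\top \Si^{-1}(\rmQ\YY_s)$ produces $\boldPsi^\top(\YY_s \otimes \Id)\,\Si^{-1}\,(\YY_s^\top \otimes \Id)\,\boldPsi$. Two applications of the mixed-product rule collapse the central block, first via $\Si^{-1}(\YY_s^\top \otimes \Id) = \YY_s^\top \otimes \Si^{-1}$ and then via $(\YY_s \otimes \Id)(\YY_s^\top \otimes \Si^{-1}) = (\YY_s\YY_s^\top)\otimes \Si^{-1}$, so the integrand is $\boldPsi^\top\big[(\YY_s\YY_s^\top)\otimes \Si^{-1}\big]\boldPsi$. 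Pulling $\boldPsi$ outside the Lebesgue integral and using linearity of the Kronecker product in its first factor gives $\int_0^t \langle \rmQ\YY_s, \rmQ\YY_s\rangle_{\Si}\,ds = \boldPsi^\top(\Kt_t \otimes \Si^{-1})\boldPsi = \boldPsi^\top[\Iit]_t\boldPsi$, again by Definition \ref{definition:node-level-response}. Inserting both reductions into \eqref{eq:radon-nikodym-derivative} yields the claimed expression for $\mathcal{L}_t(\boldPsi; \YY)$.

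The only genuine obstacle is bookkeeping: the correct ordering in $\boldsymbol{b}^\top \otimes \Id$ versus $\Id \otimes \boldsymbol{b}^\top$ depends on the column-stacking convention, and it must be tracked consistently so that the final factor reads $\Kt_t \otimes \Si^{-1}$ rather than the transposed ordering. I would guard against this by verifying the scalar identity $\boldsymbol{a}^\top \rmQ\boldsymbol{b} = \boldPsi^\top(\boldsymbol{b}\otimes\boldsymbol{a})$ entrywise on a single coordinate $Q_{ij}$ before trusting the matrix manipulations. Finally, finiteness of both integrals for all $\boldPsi \in \R^{d^2}$ (so that $\mathcal{L}_t$ is well defined on the set singled out by Proposition \ref{proposition:node-level-psi-identiability}) follows exactly as in the $\boldTheta$-case of Proposition \ref{proposition:radon-nikodym}, using the growth and nonsingularity of $\Kt_t$ established in Lemma \ref{lemma:square-kronecker-product}; no new integrability argument is required.
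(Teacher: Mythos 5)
Your proposal is correct and follows essentially the same route as the paper's proof: both specialise the Radon--Nikodym derivative \eqref{eq:radon-nikodym-derivative} to $\rmQ = \vectorise^{-1}(\boldPsi)$, rewrite the stochastic-integral term as $\boldPsi^\top \Iit_t$ and the quadratic term as $\boldPsi^\top [\Iit]_t \boldPsi$ via the mixed-product rule, and dispose of finiteness by the square integrability of $\YY$. The only difference is stylistic --- you use the coordinate-free identity $\vectorise(ABC) = (C^\top \otimes A)\vectorise(B)$ where the paper expands the inner products in explicit index sums before reassembling the Kronecker products --- and your entrywise sanity check $\boldsymbol{a}^\top \rmQ \boldsymbol{b} = \boldPsi^\top(\boldsymbol{b}\otimes\boldsymbol{a})$ correctly guards the stacking convention on which that shortcut depends.
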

\begin{proof}
	See Appendix \ref{proof:proposition:node-level-likelihood}.
\end{proof}
\begin{remark}
	To obtain the MLE of $\vectorise(\rmQ(\boldPsi))$, we factor in the network topology $\An$ by transforming linearly $\boldPsi$ into $\vectorise(\Id + \An) \odot \boldPsi$ as given in Eq.\ \eqref{eq:Q-using-psi}. Therefore, there is no need to reformulate the likelihood given in Proposition \ref{proposition:node-level-likelihood}.
\end{remark}
Define a compact set $\widehat{\boldsymbol{\Psi}} \subseteq \R^{d^2}$ such that
\begin{equation}
	\label{eq:compact-psi-set}
	\widehat{\boldsymbol{\Psi}} \subseteq \Big\{\boldPsi \in \R^{d^2}: \psi_{d(i-1)+i} > n_i \sum_{j\neq i}|\psi_{d(j-1)+i}| > 0,\ \forall i \in \{1,\dots,d\}\Big\},
\end{equation}
that is where $\Q(\boldPsi)$ is diagonally dominant: this ensures the well-definedness of the GrOU process. Similarly to the $\boldTheta$-GrOU case, we denote by $\left(\proba_\YY^{\boldPsi}, \ \boldPsi \in \widehat{\boldsymbol{\Psi}}\right)$ the statistical models of \levy-driven Ornstein-Uhlenbeck processes with respect to the likelihood from Proposition \ref{proposition:node-level-likelihood} indexed on $\widehat{\boldsymbol{\Psi}}$.

We formulate an equivalent to Theorem \ref{theorem:mle_cts_vec} for the $\boldPsi$-GrOU process:

\begin{theorem}{($\boldPsi$-GrOU MLE with continuous-time observations)\\}
\label{theorem:mle_cts_vec_psi}
Suppose that Assumptions \ref{assumption:A-known} and \ref{assumption:strong-solution-vector} hold. Assume that the $\boldPsi$-GrOU process $\YY$ is observed in continuous time and has finite second moments. Then, the MLE $\widehat{\boldPsi}_t$ on the compact set $\widehat{\boldsymbol{\Psi}}$ solves the equation $$\Iit_t = [\Iit]_t \cdot \widehat{\boldPsi}_t , \quad \text{for } t \geq 0.$$
Moreover, $\widehat{\boldPsi}_t$ satisfies the properties:
\begin{enumerate}
    \item Since $\det([\Iit]_t) > 0 \ P_{t, \YY}-a.s.$ for $t \geq 0$ large enough and
   $$\widehat{\boldPsi}_t = [\Iit]_t^{-1} \cdot \Iit_t.$$
    \item The MLE $\widehat{\boldPsi}_t$ exists almost surely and uniquely under $\proba_{t,\YY}$.
\end{enumerate}
\end{theorem}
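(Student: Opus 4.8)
The plan is to mirror the proof of Theorem \ref{theorem:mle_cts_vec}, exploiting that the log-likelihood $\ell_t(\boldPsi) := \boldPsi^\top \Iit_t - \tfrac{1}{2}\boldPsi^\top [\Iit]_t \boldPsi$ arising from Proposition \ref{proposition:node-level-likelihood} is a concave quadratic function of $\boldPsi \in \R^{d^2}$. Differentiating with respect to $\boldPsi$ yields the gradient $\Iit_t - [\Iit]_t \boldPsi$ and the constant Hessian $-[\Iit]_t$. Since $[\Iit]_t = \Kt_t \otimes \Si^{-1}$ is a Kronecker product of positive semidefinite matrices (with $\Kt_t=\int_0^t\YY_s\YY_s^\top ds$ always positive semidefinite and $\Si^{-1}$ positive definite), it is positive semidefinite, so $\ell_t$ is concave and any stationary point is a global maximiser. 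Setting the gradient to zero gives the normal equation $\Iit_t = [\Iit]_t \widehat{\boldPsi}_t$ in the statement. It then remains to show that $[\Iit]_t$ is almost surely invertible, indeed positive definite, for $t$ large enough, which simultaneously delivers the closed form $\widehat{\boldPsi}_t = [\Iit]_t^{-1}\Iit_t$, strict concavity of $\ell_t$, and hence existence and uniqueness of the maximiser.

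The central step is the positive definiteness of $[\Iit]_t = \Kt_t \otimes \Si^{-1}$ (Definition \ref{definition:node-level-response}), and I would obtain it directly from the Kronecker eigenvalue structure: the eigenvalues of $\Kt_t \otimes \Si^{-1}$ are the products $\lambda_i(\Kt_t)\,\mu_j(\Si^{-1})$ ranging over the eigenvalues of the two factors. By Lemma \ref{lemma:square-kronecker-product}, $\Kt_t$ is $\proba_0$-a.s.\ nonsingular for $t$ large enough, with $\liminf_{t\to\infty} t^{-1}\lambda_{\min}(\Kt_t) > 0$, so every $\lambda_i(\Kt_t)$ is strictly positive; and because $\Si \in \Ss^{++}$, every $\mu_j(\Si^{-1})$ is strictly positive. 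Hence all eigenvalues of $[\Iit]_t$ are strictly positive for $t$ large, giving positive definiteness, and the determinant claim follows from $\det([\Iit]_t) = \det(\Kt_t)^d\,\det(\Si^{-1})^d > 0$. Strict concavity of $\ell_t$ on this event then yields a unique maximiser over the convex compact set $\widehat{\boldPsi}$, and when the stationary point $[\Iit]_t^{-1}\Iit_t$ lies in $\widehat{\boldPsi}$ the constrained and unconstrained optimisers coincide.

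Finally, the almost-sure statements must be placed under the correct measure. Lemma \ref{lemma:square-kronecker-product} is stated $\proba_0$-a.s., whereas the theorem asserts the conclusion $\proba_{t,\YY}$-a.s. Since $\proba_{\YY}$ is constructed via the Radon--Nikodym derivative \eqref{eq:radon-nikodym-derivative} to be equivalent to $\proba_0$, the two measures share the same null sets, so the event $\{\det([\Iit]_t) > 0 \text{ for all } t \text{ large}\}$ also has full measure under $\proba_{t,\YY}$; on that event the MLE exists, equals $[\Iit]_t^{-1}\Iit_t$, and is unique.

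I expect the only genuine obstacle to lie inside Lemma \ref{lemma:square-kronecker-product}, namely the nonsingularity of $\Kt_t$ together with the sharp $\lambda_{\min}$ and $\lambda_{\max}$ growth rates, which is precisely why it is isolated and proved separately in the Appendix. Granting that lemma, the remainder is the routine concave-optimisation and Kronecker-algebra argument above, structurally identical to the $\boldTheta$-GrOU case with $\HH_t, [\HH]_t$ replaced by $\Iit_t, [\Iit]_t$ and the $2\times 2$ information matrix replaced by the $d^2\times d^2$ Kronecker product $\Kt_t \otimes \Si^{-1}$.
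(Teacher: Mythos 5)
Your proposal is correct and structurally matches the paper's proof, which is itself only two sentences: invoke Lemma \ref{lemma:square-kronecker-product} for the almost-sure invertibility of ${\Kt}_t=\int_0^t\YY_s\YY_s^\top ds$, then ``apply the same argument as for Theorem \ref{theorem:mle_cts_vec}.'' The one difference in framing is that the argument you are asked to replicate runs through the exponential-family machinery of K\"uchler--S{\o}rensen (their Theorem 8.2.1, after checking minimality of the representation and steepness of the cumulant function $C_t(\boldPsi)=\tfrac12\boldPsi^\top[\Iit]_t\boldPsi$, with existence and uniqueness then equivalent to $|\Iit_t|<\infty$ a.s.\ and $\det([\Iit]_t)>0$), whereas you argue by elementary concave quadratic optimisation: gradient $\Iit_t-[\Iit]_t\boldPsi$, constant Hessian $-[\Iit]_t$, strict concavity once $[\Iit]_t\in\Ss^{++}$. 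For a quadratic log-likelihood these are the same computation, so nothing is lost; your route is more self-contained, and you additionally make explicit two steps the paper leaves implicit, namely the Kronecker spectral fact that $\lambda_i({\Kt}_t)\mu_j(\Si^{-1})>0$ gives $\det([\Iit]_t)=\det({\Kt}_t)^d\det(\Si^{-1})^d>0$, and the transfer of the $\proba_0$-a.s.\ statement of Lemma \ref{lemma:square-kronecker-product} to $\proba_{t,\YY}$ via equivalence of the measures through the Radon--Nikodym derivative \eqref{eq:radon-nikodym-derivative} --- a point the paper glosses over when it cites the lemma as if it were stated under $\proba_{t,\YY}$. Both your write-up and the paper's share the same residual imprecision about the constrained problem on $\widehat{\boldsymbol{\Psi}}$ (the normal equation characterises the unconstrained maximiser, which must be assumed or shown to lie in the compact set), so this is not a gap relative to the paper's own standard.
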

\begin{proof}
	Recall that $\int_0^t\YY_s \YY_s^\top ds$ is $\proba_{t,\YY}-a.s.$ invertible (see Lemma \ref{lemma:square-kronecker-product}). The same argument as for Theorem \ref{theorem:mle_cts_vec} can be applied.
\end{proof}

\section{Asymptotic theory for the MLEs}
\label{section:asymptotic-theory-continuous-times}
\myRed{Asymptotic properties of MLEs are a necessary step into formulating hypothesis tests necessary for sound inference.} In this section, we consider having access to a continuous flow of data and we derive the asymptotic normality of the afore-mentioned estimators as well as an augmented estimator on the whole $\Q$ matrix. We consider the estimators efficiency in the sense of H{\'a}jek-Le Cam's convolution theorem \citep[Section 2]{hajek1970efficiencyLan} 
under local asymptotic normality \citep[Chapter 5, Section 6]{LeCam1990}.

\subsection{Asymptotics for $\theta$-GrOU}
\label{section:network-level-asymptotics}
 
We now prove the consistency of the MLE for the $\boldTheta$-GrOU process on a compact set $\widehat{\boldsymbol{\Theta}}$.
\begin{proposition}{(Consistency of the estimator)\\}
\label{proposition:vec-cts-clt}
Suppose Assumptions \ref{assumption:A-known} \& \ref{assumption:strong-solution-vector} hold. Suppose that $(\YY_t,\ t \geq 0)$ satisfies Equation \eqref{eq:sde} and has finite second moments for $\boldsymbol{\theta} \in \boldsymbol{\widetilde{\Theta}}$ where $\boldsymbol{\widetilde{\Theta}}$ is a compact set in $\boldsymbol{{\Theta}}$. The MLE $\widehat{\boldsymbol{\theta}}_t$ is a consistent estimator under $\proba_{\YY}$ in the sense that $\widehat{\boldsymbol{\theta}}_t \xrightarrow{\ p \ } \boldsymbol{\theta}$ as $t \rightarrow \infty$. 
\end{proposition}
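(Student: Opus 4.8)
The plan is to exploit the closed form of the estimator supplied by Theorem~\ref{theorem:mle_cts_vec}: for $t$ large enough we have $\widehat{\boldsymbol{\theta}}_t = [\HH]_t^{-1}\HH_t$ almost surely, so the whole problem reduces to controlling these two factors as $t\to\infty$. First I would introduce the $d\times 2$ regressor matrix $\boldsymbol{R}_s := (\An\YY_s \mid \YY_s)$ whose columns are $\An\YY_s$ and $\YY_s$, so that $\Q(\boldTheta)\YY_s = \boldsymbol{R}_s\boldTheta$, $\HH_t = -\int_0^t \boldsymbol{R}_s^\top\Si^{-1}\,d\YY_s^c$ and $[\HH]_t = \int_0^t \boldsymbol{R}_s^\top\Si^{-1}\boldsymbol{R}_s\,ds$, matching Notation~\ref{notation:H-and-C}.

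The central step is to rewrite the score $\HH_t$ under the true law $\proba_\YY$. Since $\YY_s^c$ denotes the continuous $\proba_0$-martingale part, Girsanov's theorem applied to the equivalent change of measure in \eqref{eq:radon-nikodym-derivative} gives $\YY_t^c = \widetilde{\YY}_t^c - \int_0^t \Q(\boldTheta)\YY_s\,ds$, where $\widetilde{\YY}^c$ is a $\proba_\YY$-Brownian motion with covariance $\Si$. Substituting $d\YY_s^c = d\widetilde{\YY}_s^c - \Q(\boldTheta)\YY_s\,ds$ and using $\Q(\boldTheta)\YY_s = \boldsymbol{R}_s\boldTheta$ yields the decomposition $\HH_t = \boldsymbol{M}_t + [\HH]_t\,\boldTheta$, where $\boldsymbol{M}_t := -\int_0^t \boldsymbol{R}_s^\top\Si^{-1}\,d\widetilde{\YY}_s^c$ is a continuous $\proba_\YY$-local martingale with predictable quadratic variation $\langle\boldsymbol{M}\rangle_t = \int_0^t \boldsymbol{R}_s^\top\Si^{-1}\boldsymbol{R}_s\,ds = [\HH]_t$. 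Hence $\widehat{\boldsymbol{\theta}}_t - \boldTheta = [\HH]_t^{-1}\boldsymbol{M}_t$, i.e.\ the entire estimation error is carried by this mean-zero martingale term.

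To conclude I would normalise by $t$. Lemma~\ref{lemma:HH_t-limit} gives $t^{-1}[\HH]_t \to \boldsymbol{G}_\infty$ almost surely, and $\boldsymbol{G}_\infty$ is positive definite, so $(t^{-1}[\HH]_t)^{-1}\to\boldsymbol{G}_\infty^{-1}$ a.s. For each component $M_t^{(k)}$ the bracket satisfies $\langle M^{(k)}\rangle_t = ([\HH]_t)_{kk}\sim t\,(\boldsymbol{G}_\infty)_{kk}\to\infty$, so the strong law of large numbers for continuous local martingales gives $M_t^{(k)}/\langle M^{(k)}\rangle_t\to 0$ a.s., whence $t^{-1}\boldsymbol{M}_t\to\boldsymbol{0}$ a.s.\ componentwise. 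Writing $\widehat{\boldsymbol{\theta}}_t-\boldTheta = (t^{-1}[\HH]_t)^{-1}(t^{-1}\boldsymbol{M}_t)\to\boldsymbol{G}_\infty^{-1}\cdot\boldsymbol{0}=\boldsymbol{0}$ a.s.\ then yields the asserted convergence $\widehat{\boldsymbol{\theta}}_t\xrightarrow{\ p \ }\boldTheta$ (in fact almost surely). A minor point to dispatch is the compactness of $\widehat{\boldsymbol{\Theta}}$: since the unconstrained solution $[\HH]_t^{-1}\HH_t$ converges to the interior point $\boldTheta$, it eventually lies in $\widehat{\boldsymbol{\Theta}}$, so the constrained MLE coincides with it for large $t$.

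The hard part will be the martingale decomposition of the score, namely recognising that under $\proba_\YY$ the $\proba_0$-continuous martingale part acquires the compensating drift $-\int_0^\cdot \Q(\boldTheta)\YY_s\,ds$; this is exactly what converts $\HH_t$ into $\boldsymbol{M}_t + [\HH]_t\boldTheta$ and exposes a fluctuation term of bracket $[\HH]_t$. Once this is established, the positive definiteness of $\boldsymbol{G}_\infty$ (from Lemma~\ref{lemma:HH_t-limit} together with the identifiability guaranteed by Proposition~\ref{proposition:nar-equiv-identifiability}) and the martingale strong law make the remainder routine.
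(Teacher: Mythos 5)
Your proof is correct, but it takes a genuinely different route from the paper's. The paper proves Proposition \ref{proposition:vec-cts-clt} by staying inside the exponential-family framework of \cite{kuchler1997exponentialbook}: it verifies the hypotheses of Theorem 8.3.1 there --- the ergodic limit $t^{-1}[\HH]_t \to \boldsymbol{G}_\infty$ from Lemma \ref{lemma:HH_t-limit} together with the stochastic boundedness, uniformly over the compact set, of $[\HH]_t^{-1/2}\left(\HH_t - \nabla C_t(\boldTheta)\right)$ --- and then invokes that theorem to conclude $\widehat{\boldsymbol{\theta}}_t \xrightarrow{\ p \ } \boldsymbol{\theta}$. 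You instead argue directly from the martingale structure: Girsanov under $\proba_{\YY}$ gives $d\YY^c_s = d\widetilde{\YY}^c_s - \Q(\boldTheta)\YY_s\,ds$, hence the score decomposition $\HH_t = \boldsymbol{M}_t + [\HH]_t\,\boldTheta$ with $\langle \boldsymbol{M}\rangle_t = [\HH]_t$, so that $\widehat{\boldsymbol{\theta}}_t - \boldTheta = [\HH]_t^{-1}\boldsymbol{M}_t$, and consistency follows from Lemma \ref{lemma:HH_t-limit} plus the strong law for continuous local martingales. This decomposition is exactly the one the paper deploys elsewhere --- it is the $\boldTheta$-analogue of Lemma \ref{lemma:martingale-decomposition}, and the same split $\HH_t = \HH^{(s)}_t + \HH^{(d)}_t$ drives the LAN proof of Lemma \ref{lemma:network-asymptotic-normal} --- but the paper does not use it for consistency. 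What your route buys: it is self-contained (no appeal to the general theory of exponential families of processes) and it delivers almost-sure convergence, strictly stronger than the convergence in probability claimed. What the paper's route buys: the uniformity-in-$\boldTheta$ bookkeeping over the compact set is outsourced to a ready-made theorem, and the same machinery is then reused verbatim for the CLT (Theorem \ref{th:clt-theta}), keeping the two proofs parallel. One point to tighten: your handling of the constraint assumes $\boldTheta$ is interior to the compact set. If $\boldTheta$ sits on the boundary, note that the log-likelihood is the quadratic $\boldsymbol{x} \mapsto \boldsymbol{x}^\top \HH_t - \tfrac{1}{2}\boldsymbol{x}^\top [\HH]_t\, \boldsymbol{x}$, so comparing the constrained maximiser $\widehat{\boldsymbol{c}}_t$ with $\boldTheta$ yields $\lambda_{\min}([\HH]_t)\,\|\widehat{\boldsymbol{c}}_t - [\HH]_t^{-1}\HH_t\|^2 \leq \lambda_{\max}([\HH]_t)\,\|\boldTheta - [\HH]_t^{-1}\HH_t\|^2$; since both extreme eigenvalues grow linearly in $t$ by Lemma \ref{lemma:HH_t-limit} and the right-hand norm vanishes almost surely by your argument, the constrained estimator converges as well, so the conclusion is unaffected.
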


\begin{proof}
	See Appendix \ref{proof:proposition:vec-cts-clt}.
\end{proof}

We denote by $\left(\proba_\YY^{\boldTheta}, \ \boldTheta \in \widehat{\boldsymbol{\Theta}}\right)$ the statistical models of \levy-driven Ornstein-Uhlenbeck processes with respect to the likelihood from Proposition \ref{proposition:radon-nikodym} indexed on $\widehat{\boldsymbol{\Theta}}$. We obtain the local asymptotic normality for this sequence:
\begin{lemma}
\label{lemma:network-asymptotic-normal}
	 The family of statistical models $\left(\proba_\YY^{\boldTheta}, \ \boldTheta \in \widehat{\boldsymbol{\Theta}}\right)$ with respect to $\left(\mathcal{L}_t(\boldTheta;\YY): \boldTheta \in \widehat{\boldsymbol{\Theta}}\right)$ is locally asymptotically normal.
\end{lemma}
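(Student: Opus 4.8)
The plan is to read off the Local Asymptotic Normality (LAN) expansion directly from the exponential-family form of the likelihood in Proposition~\ref{proposition:radon-nikodym}: because $\log\mathcal{L}_t(\boldTheta;\YY)=\boldTheta^\top\HH_t-\tfrac12\boldTheta^\top[\HH]_t\boldTheta$ is \emph{quadratic} in $\boldTheta$, the local log-likelihood ratio will be an \emph{exact} quadratic with no Taylor remainder to estimate. Fix $\boldTheta$ in the interior of $\widehat{\boldsymbol{\Theta}}$, take the scalar local rate $t^{-1/2}$ and the perturbation $\boldTheta+t^{-1/2}\boldsymbol{h}$ for $\boldsymbol{h}\in\R^2$. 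Expanding and using the symmetry of $[\HH]_t$ gives
$$\log\mathcal{L}_t\!\left(\boldTheta+t^{-1/2}\boldsymbol{h};\YY\right)-\log\mathcal{L}_t\!\left(\boldTheta;\YY\right)=\boldsymbol{h}^\top\Delta_t(\boldTheta)-\tfrac12\boldsymbol{h}^\top\Gamma_t\boldsymbol{h},$$
where $\Delta_t(\boldTheta):=t^{-1/2}\bigl(\HH_t-[\HH]_t\boldTheta\bigr)$ is the rescaled score and $\Gamma_t:=t^{-1}[\HH]_t$. Thus the proof reduces to two convergences under the true law $\proba_\YY^{\boldTheta}$: that $\Gamma_t$ tends to a deterministic positive-definite matrix, and that $\Delta_t(\boldTheta)$ is asymptotically centred Gaussian with that same covariance.

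The first is exactly Lemma~\ref{lemma:HH_t-limit}: by stationarity and ergodicity $\Gamma_t=t^{-1}[\HH]_t\to\boldsymbol{G}_\infty$ almost surely, and $\boldsymbol{G}_\infty$ is positive definite (Notation~\ref{notation:H-and-C}), so it serves as the limiting Fisher information. For the second, the key step is to show that, \emph{under $\proba_\YY^{\boldTheta}$}, the score $\HH_t-[\HH]_t\boldTheta$ is a genuine continuous martingale whose bracket is $[\HH]_t$. I would obtain this by a Girsanov computation on the change of measure \eqref{eq:radon-nikodym-derivative}: writing $N_t=-\int_0^t\langle\Q(\boldTheta)\YY_s,d\YY_s^c\rangle_\Si$ for the exponent of the Doléans--Dade exponential $\mathcal{L}_t(\boldTheta;\YY)=\mathcal{E}(N)_t$, one finds $\langle\YY^c,N\rangle_t=-\int_0^t\Q(\boldTheta)\YY_s\,ds$, so that $\widetilde{M}_t:=\YY^c_t+\int_0^t\Q(\boldTheta)\YY_s\,ds$ is a continuous $\proba_\YY^{\boldTheta}$-martingale with $[\widetilde{M}]_t=\Si\,t$. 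Substituting $d\YY^c_s=d\widetilde{M}_s-\Q(\boldTheta)\YY_s\,ds$ into the two integrals defining $\HH_t$, the drift contribution cancels precisely against $[\HH]_t\boldTheta$ (because $\theta_1\langle\An\YY_s,\An\YY_s\rangle_\Si+\theta_2\langle\An\YY_s,\YY_s\rangle_\Si=\langle\An\YY_s,\Q(\boldTheta)\YY_s\rangle_\Si$, and similarly for the second component), leaving
$$\HH_t-[\HH]_t\boldTheta=-\int_0^t\begin{pmatrix}(\An\YY_s)^\top\Si^{-1}\\ \YY_s^\top\Si^{-1}\end{pmatrix}d\widetilde{M}_s.$$
A direct bracket computation, using $[\widetilde{M}]_t=\Si\,t$, recovers exactly the matrix $[\HH]_t$ of Notation~\ref{notation:H-and-C}.

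With the score identified as a continuous martingale of bracket $[\HH]_t$, the conclusion follows from a multivariate central limit theorem for continuous local martingales (Rebolledo's theorem, or equivalently the exponential-family framework of \cite{kuchler1997exponentialbook}): since $t^{-1}$ times the bracket of $\Delta_t(\boldTheta)$ equals $\Gamma_t\to\boldsymbol{G}_\infty$ \emph{deterministically}, and the martingale is continuous so the Lindeberg jump condition is vacuous, we obtain $\Delta_t(\boldTheta)\xrightarrow{\ \mathcal{D}\ }\mathcal{N}(\boldsymbol{0},\boldsymbol{G}_\infty)$ under $\proba_\YY^{\boldTheta}$. Together with $\Gamma_t\to\boldsymbol{G}_\infty$ this is precisely Le Cam's LAN expansion, with central sequence $\Delta_t(\boldTheta)$ and information $\boldsymbol{G}_\infty$. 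I expect the main obstacle to be this martingale identification under the true measure: the absence of a Taylor remainder and the convergence of $\Gamma_t$ are immediate, but the Girsanov bookkeeping must be arranged so that the finite-variation part $[\HH]_t\boldTheta$ is annihilated \emph{exactly}; any residual drift would shift the limit and spoil the centring. The fact that the limiting bracket $\boldsymbol{G}_\infty$ is a \emph{deterministic} (rather than random) matrix—a consequence of ergodicity—is what yields ordinary, rather than mixed, normality and makes the CLT step clean once the martingale structure is in place.
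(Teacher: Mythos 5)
Your proposal is correct and follows essentially the same route as the paper: an exact quadratic local log-likelihood expansion, identification of the score $\HH_t-[\HH]_t\boldTheta$ as a continuous square-integrable martingale with bracket $[\HH]_t$, the ergodic limit $t^{-1}[\HH]_t\to\boldsymbol{G}_\infty$, and a continuous-martingale CLT (the paper invokes Theorem A.7.7 of \cite{kuchler1997exponentialbook}, which delivers the same joint convergence as your Rebolledo argument). The only difference is presentational: where you derive the martingale property of the score under $\proba_\YY^{\boldTheta}$ via an explicit Girsanov computation on the Dol\'eans--Dade exponential, the paper obtains the identical decomposition directly by substituting $d\YY^c_t=d\WW_t-\Q(\boldTheta)\YY_{t-}dt$, so that its stochastic term $\HH^{(s)}_t$ coincides with your $\HH_t-[\HH]_t\boldTheta$ and its $\WW$ with your $\widetilde{M}$.
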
 
\begin{proof}
	See Appendix \ref{proof:lemma:network-asymptotic-normal}
\end{proof}
The central limit theorem for $\widehat{\boldTheta}$ is given by:
\begin{theorem}
\label{th:clt-theta}
	Suppose that $(\YY_t,\ t \geq 0)$ satisfies Equation \eqref{eq:sde} and has finite second moments for $\boldsymbol{\theta} \in \boldsymbol{\widetilde{\Theta}}$. In addition, suppose that $\E\left[\exp(\boldTheta^\top \HH_t)\right]<\infty$ for $t$ large enough. Then, the MLE $\widehat{\boldTheta}_t$ satisfies under $\proba_{\YY}$
\begin{equation}
\label{eq:clt-stddev-time-dep}
   [\HH]_t^{1/2}\cdot \left(\widehat{\boldsymbol{\theta}}_t - \boldsymbol{\theta}\right) \xrightarrow{\ \mathcal{D} \ } \mathcal{N}(\boldsymbol{0}, I_{2 \times 2}), \quad \text{as $t \rightarrow \infty$.}
\end{equation}
Moreover, $\widehat{\boldTheta}_t$ is efficient in the sense of H{\'a}jek-Le Cam's convolution theorem.
\end{theorem}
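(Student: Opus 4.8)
The plan is to reduce the statement to a central limit theorem for a continuous local martingale whose quadratic variation is exactly $[\HH]_t$, and then to read off efficiency from the local asymptotic normality of Lemma~\ref{lemma:network-asymptotic-normal}. By Theorem~\ref{theorem:mle_cts_vec}, for $t$ large enough $\det([\HH]_t)>0$ and $\widehat{\boldTheta}_t=[\HH]_t^{-1}\HH_t$, so that the recentred, rescaled estimator can be written as
$$
[\HH]_t^{1/2}\bigl(\widehat{\boldTheta}_t-\boldTheta\bigr)=[\HH]_t^{-1/2}\,\bigl(\HH_t-[\HH]_t\boldTheta\bigr).
$$
I would therefore study the score $\boldsymbol{M}_t:=\HH_t-[\HH]_t\boldTheta$. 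Using $\Q(\boldTheta)=\theta_2\Id+\theta_1\An$ together with Notation~\ref{notation:H-and-C}, the two components of $\boldsymbol{M}_t$ collapse to $-\int_0^t\langle\An\YY_s,\,d\YY^c_s+\Q\YY_s\,ds\rangle_\Si$ and $-\int_0^t\langle\YY_s,\,d\YY^c_s+\Q\YY_s\,ds\rangle_\Si$.

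The exponential-moment hypothesis $\E[\exp(\boldTheta^\top\HH_t)]<\infty$ is what legitimises the change of measure: it ensures the density in \eqref{eq:radon-nikodym-derivative} is a genuine mean-one $\proba_0$-martingale, so that $\proba_\YY$ is a probability measure and Girsanov's theorem applies. Under $\proba_\YY$ the shifted process $\widetilde{\YY}^c_t:=\YY^c_t+\int_0^t\Q\YY_s\,ds$ is then a continuous local martingale with $d\langle\widetilde{\YY}^c\rangle_t=\Si\,dt$. Hence $\boldsymbol{M}_t=-\int_0^t(\An\YY_s,\YY_s)^\top\Si^{-1}\,d\widetilde{\YY}^c_s$ is a continuous $\proba_\YY$-local martingale, and a direct computation of quadratic (co)variations using $d\langle\widetilde{\YY}^c\rangle_t=\Si\,dt$ shows $[\boldsymbol{M}]_t=[\HH]_t$.

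Next I would apply the central limit theorem for continuous local martingales. By Lemma~\ref{lemma:HH_t-limit}, $t^{-1}[\boldsymbol{M}]_t=t^{-1}[\HH]_t\to\boldsymbol{G}_\infty$ almost surely, with $\boldsymbol{G}_\infty$ deterministic and positive definite. Because the normalised quadratic variation converges to a \emph{deterministic} limit, the martingale CLT (applied componentwise via the Cramér--Wold device, the deterministic limit making stable convergence coincide with ordinary weak convergence) gives $t^{-1/2}\boldsymbol{M}_t\xrightarrow{\mathcal D}\mathcal N(\boldsymbol{0},\boldsymbol{G}_\infty)$. Combining this with $t^{-1}[\HH]_t\to\boldsymbol{G}_\infty$ a.s.\ and Slutsky's lemma yields
$$
[\HH]_t^{-1/2}\boldsymbol{M}_t=\bigl(t^{-1}[\HH]_t\bigr)^{-1/2}\,t^{-1/2}\boldsymbol{M}_t\xrightarrow{\mathcal D}\boldsymbol{G}_\infty^{-1/2}\,\mathcal N(\boldsymbol{0},\boldsymbol{G}_\infty)=\mathcal N(\boldsymbol{0},I_{2\times2}),
$$
which is exactly \eqref{eq:clt-stddev-time-dep}. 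For efficiency I would compare the asymptotic law with the Hájek--Le Cam bound: Lemma~\ref{lemma:network-asymptotic-normal} supplies the LAN property with rescaled Fisher information $\boldsymbol{G}_\infty$, while the display above shows that $t^{1/2}(\widehat{\boldTheta}_t-\boldTheta)$ has limiting covariance $\boldsymbol{G}_\infty^{-1}$, the inverse Fisher information; the convolution theorem then certifies that $\widehat{\boldTheta}_t$ attains the optimal bound.

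The main obstacle is making the multivariate continuous-martingale CLT fully rigorous and dovetailing it with the other ingredients: confirming that the exponential-moment assumption is precisely what is needed so that $\boldsymbol{M}$ is a true $\proba_\YY$-martingale under the Girsanov change of measure, and exploiting the a.s.\ convergence of $t^{-1}[\HH]_t$ to a deterministic, invertible matrix so as to bypass the delicate stable-convergence bookkeeping and to identify the limiting covariance with the inverse Fisher information required for the efficiency conclusion.
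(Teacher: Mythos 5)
Your proposal is correct, and for the convergence \eqref{eq:clt-stddev-time-dep} it takes a genuinely more self-contained route than the paper. The paper's proof verifies the regularity conditions of the exponential-family machinery of K\"uchler and S{\o}rensen (continuity of $t\mapsto\HH_t$, bounded variation of $t \mapsto \nabla C_t(\boldTheta)=[\HH]_t\boldTheta$, and the ergodic convergence $t^{-1}[\HH]_t\rightarrow\boldsymbol{G}_\infty$ together with the same limit for $t^{-1}\E([\HH]_t)$ via Fubini--Tonelli) and then invokes their Theorem 8.3.4 as a black box to produce \eqref{eq:clt-stddev-time-dep}; you instead unwind what that theorem encapsulates, writing $[\HH]_t^{1/2}(\widehat{\boldTheta}_t-\boldTheta)=[\HH]_t^{-1/2}(\HH_t-[\HH]_t\boldTheta)$, identifying the score as a continuous $\proba_\YY$-local martingale with quadratic variation exactly $[\HH]_t$, and applying the continuous-martingale CLT plus Slutsky. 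Your decomposition is in fact the same one the paper uses inside the proof of Lemma \ref{lemma:network-asymptotic-normal} (the splitting $\HH_t=\HH^{(s)}_t+\HH^{(d)}_t$ with $\HH^{(d)}_t=[\HH]_t\boldTheta$, Equation \eqref{eq:drift-term-reformulation}, followed by the martingale CLT in the form of Theorem A.7.7 of K\"uchler--S{\o}rensen), so your argument essentially promotes the paper's LAN computation into a direct proof of the theorem; what it buys is transparency (the role of the score martingale and of the deterministic limit $\boldsymbol{G}_\infty$ is explicit, and stable convergence can be sidestepped), at the cost of redoing work the citation absorbs. Two remarks: your reading of the hypothesis $\E[\exp(\boldTheta^\top\HH_t)]<\infty$ as what guarantees the density in \eqref{eq:radon-nikodym-derivative} is a mean-one martingale is a plausible gloss, but note the paper already postulates in Section 3.2 that $\proba_\YY$ is a probability measure under which that process is a martingale, and in its own proof the exponential-moment condition enters only through the hypotheses of Theorem 8.3.4; this does not affect the validity of your argument. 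The efficiency part of your proposal (LAN from Lemma \ref{lemma:network-asymptotic-normal}, limiting covariance $\boldsymbol{G}_\infty^{-1}$ equal to the inverse Fisher information, then the H\'ajek--Le Cam convolution theorem) coincides with the paper's.
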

\begin{proof}
	See Appendix \ref{proof:th:clt-theta}.
\end{proof}

\begin{corollary}
\label{corollary:2d-cts-clt}
Under the same assumptions as in Theorem \ref{th:clt-theta}, one obtains
$$t^{1/2}(\widehat{\boldsymbol{\theta}}_t - \boldsymbol{\theta}) \xrightarrow{\ \mathcal{D} \ } \mathcal{N}\left(\boldsymbol{0_2}, \boldsymbol{G}_\infty^{-1}\right), \quad \text{as $t \rightarrow \infty$,}$$
where $$\boldsymbol{G}_\infty :=
\begin{pmatrix}
\E\left( \langle \An \YY_\infty, \An \YY_\infty \rangle_{\Si} \right) & \E\left(  \langle \An \YY_\infty,  \YY_\infty \rangle_{\Si} \right)\\
\E\left( \langle \An \YY_\infty,  \YY_\infty \rangle_{\Si} \right) & \E\left( \langle  \YY_\infty,  \YY_\infty \rangle_{\Si} \right)
\end{pmatrix},$$
which is positive definite by Cauchy-Schwarz's inequality.
\end{corollary}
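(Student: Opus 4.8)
The plan is to obtain the corollary directly from Theorem~\ref{th:clt-theta} by rescaling the random normalisation $[\HH]_t^{1/2}$ by $t^{1/2}$ and replacing it, via Slutsky's theorem, by the deterministic limit furnished by Lemma~\ref{lemma:HH_t-limit}. First I would use that $t$ is a positive scalar and $t^{-1}[\HH]_t$ is symmetric positive semidefinite to factor the symmetric square root as $[\HH]_t^{1/2} = t^{1/2}\,(t^{-1}[\HH]_t)^{1/2}$; uniqueness of the symmetric positive semidefinite square root makes this identity exact. Consequently, for $t$ large enough that $[\HH]_t$ (hence $t^{-1}[\HH]_t$) is invertible, one has the algebraic identity
\begin{equation*}
t^{1/2}\bigl(\widehat{\boldsymbol{\theta}}_t - \boldsymbol{\theta}\bigr) = \bigl(t^{-1}[\HH]_t\bigr)^{-1/2}\cdot [\HH]_t^{1/2}\bigl(\widehat{\boldsymbol{\theta}}_t - \boldsymbol{\theta}\bigr),
\end{equation*}
in which the second factor converges in distribution to $\mathcal{N}(\boldsymbol{0}, I_{2\times 2})$ by Theorem~\ref{th:clt-theta}.

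Next I would verify that $\boldsymbol{G}_\infty$ is positive definite, so that its inverse square root exists and the maps involved are continuous at $\boldsymbol{G}_\infty$. The diagonal entries $\E(\langle\An\YY_\infty,\An\YY_\infty\rangle_\Si)$ and $\E(\langle\YY_\infty,\YY_\infty\rangle_\Si)$ are strictly positive because $\Si\in\Ss^{++}$ and $\YY_\infty$ is non-degenerate, while the Cauchy-Schwarz inequality for the inner product $\langle\cdot,\cdot\rangle_\Si$ bounds the off-diagonal cross term strictly below the geometric mean of the diagonal terms, the strictness coming from the fact that $\An\YY_\infty$ and $\YY_\infty$ are not almost surely collinear. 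Hence $\det(\boldsymbol{G}_\infty)>0$ and $\boldsymbol{G}_\infty\in\Ss^{++}$.

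By Lemma~\ref{lemma:HH_t-limit} we have $t^{-1}[\HH]_t\to\boldsymbol{G}_\infty$ almost surely, and since matrix inversion and the symmetric square root are continuous on the open cone $\Ss^{++}$, the continuous mapping theorem yields $(t^{-1}[\HH]_t)^{-1/2}\to\boldsymbol{G}_\infty^{-1/2}$ almost surely, hence in probability. Applying Slutsky's theorem to the product above, the right-hand side converges in distribution to $\boldsymbol{G}_\infty^{-1/2}\,\mathcal{N}(\boldsymbol{0}, I_{2\times 2})$, which is $\mathcal{N}(\boldsymbol{0}, \boldsymbol{G}_\infty^{-1/2}\boldsymbol{G}_\infty^{-1/2}) = \mathcal{N}(\boldsymbol{0},\boldsymbol{G}_\infty^{-1})$ by symmetry of $\boldsymbol{G}_\infty^{-1/2}$, giving the claim. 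The only genuinely delicate steps are the strict Cauchy-Schwarz bound securing $\boldsymbol{G}_\infty\in\Ss^{++}$ (which rules out degenerate collinearity of $\An\YY_\infty$ and $\YY_\infty$) and the continuity of the matrix square root at $\boldsymbol{G}_\infty$; the remainder is a routine Slutsky argument.
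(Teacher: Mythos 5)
Your proposal is correct and follows essentially the same route as the paper's proof: Lemma~\ref{lemma:HH_t-limit} plus the continuous mapping theorem to get $(t^{-1}[\HH]_t)^{-1/2}\to\boldsymbol{G}_\infty^{-1/2}$ almost surely, then Theorem~\ref{th:clt-theta} and Slutsky's lemma to conclude. You simply spell out the details (the square-root factorisation, the strict Cauchy--Schwarz argument for $\boldsymbol{G}_\infty\in\Ss^{++}$, and the final covariance computation) that the paper leaves implicit.
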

\begin{proof} Recall the result of Lemma \ref{lemma:HH_t-limit}; the continuous mapping theorem yields that $([\HH]_t/t)^{1/2}\rightarrow \boldsymbol{G}_\infty^{1/2}$ $\proba_\YY$-a.s.
By application of Theorem \ref{th:clt-theta} and Slutsky's lemma, the result follows directly.
\end{proof}

\subsection{Asymptotics for $\psi$-GrOU}
We present a decomposition of the MLE $\widehat{\boldsymbol{\boldPsi}}$ in the following lemma:
\begin{lemma}
	\label{lemma:martingale-decomposition}
	In the context of Theorem \ref{theorem:mle_cts_vec_psi}, we have that $$\widehat{\boldsymbol{\psi}}_t - \boldsymbol{\psi} = [\Iit]_t^{-1}\Mt_t, \quad t \geq 0,$$ 
where $\Mt_t := \int_0^t \YY_s \otimes d\WW_s$ is the (martingale) remainder vector and $[\Iit]_t = {\Kt}_t \otimes \Id$.
\end{lemma}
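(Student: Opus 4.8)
The plan is to reduce the statement to a single computation, namely identifying the score of the exponential family, evaluated at the true parameter, with the claimed martingale. By Theorem \ref{theorem:mle_cts_vec_psi}, together with Lemma \ref{lemma:square-kronecker-product} (which makes ${\Kt}_t$, and hence $[\Iit]_t = {\Kt}_t\otimes\Si^{-1}$, invertible $\proba_{t,\YY}$-a.s.\ for $t$ large enough), the estimator has the closed form $\widehat{\boldPsi}_t = [\Iit]_t^{-1}\Iit_t$. I would first rewrite this as
\begin{equation*}
\widehat{\boldPsi}_t - \boldPsi = [\Iit]_t^{-1}\bigl(\Iit_t - [\Iit]_t\,\boldPsi\bigr), \qquad t \geq 0,
\end{equation*}
so that the whole lemma collapses to showing that the residual score $\Iit_t - [\Iit]_t\,\boldPsi$ equals $\Mt_t$.

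Next I would unfold $[\Iit]_t\,\boldPsi$ using vectorisation calculus. Since $\boldPsi = \vectorise(\Q)$ with $\Q = \vectorise^{-1}(\boldPsi)$ and ${\Kt}_t = \int_0^t \YY_s\YY_s^\top\,ds$ is symmetric, the identity $(\mathbf{A}\otimes\mathbf{B})\vectorise(\mathbf{X}) = \vectorise(\mathbf{B}\mathbf{X}\mathbf{A}^\top)$ yields
\begin{equation*}
[\Iit]_t\,\boldPsi = ({\Kt}_t\otimes\Si^{-1})\,\vectorise(\Q) = \vectorise\!\left(\Si^{-1}\Q\,{\Kt}_t\right) = \int_0^t \YY_s\otimes\bigl(\Si^{-1}\Q\,\YY_s\bigr)\,ds,
\end{equation*}
where the last equality pushes the integral inside and uses $\vectorise(\mathbf{u}\mathbf{v}^\top) = \mathbf{v}\otimes\mathbf{u}$. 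Combining this with $\Iit_t = -\int_0^t \YY_s\otimes\Si^{-1}\,d\YY_s^c$ from Definition \ref{definition:node-level-response} gives
\begin{equation*}
\Iit_t - [\Iit]_t\,\boldPsi = -\int_0^t \YY_s\otimes\Si^{-1}\bigl(d\YY_s^c + \Q\,\YY_s\,ds\bigr).
\end{equation*}

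The crux is then to recognise $d\YY_s^c + \Q\,\YY_s\,ds$ as a pure martingale differential under the data-generating law $\proba_\YY$. Conceptually this is the statement that the score of a continuous-time exponential family is a $\proba_\YY$-martingale at the true parameter; concretely I would verify it by a Girsanov computation with $\log(d\proba_{t,\YY}/d\proba_{t,0})$, which shows that the (measure-invariant) continuous martingale part $\YY^c$ acquires, under $\proba_\YY$, precisely the compensator $-\int_0^\cdot \Q\,\YY_s\,ds$. Hence $d\YY_s^c + \Q\,\YY_s\,ds = d\WW_s$ is the increment of the continuous Gaussian martingale driving the noise (with the $\Si$-normalisation absorbed into the definition of $\WW$), and the display above becomes $\int_0^t \YY_s\otimes d\WW_s = \Mt_t$. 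Finally, $\Mt_t$ is a stochastic integral of the left-continuous adapted (hence predictable) integrand $\YY_{s-}$ against a continuous martingale, so it is itself a local martingale, which justifies the name and, with the factorisation above, proves $\widehat{\boldPsi}_t - \boldPsi = [\Iit]_t^{-1}\Mt_t$.

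I expect the main obstacle to be exactly this third step: correctly identifying the continuous $\proba_\YY$-martingale part of $\YY$ and tracking the $\Si$-bookkeeping so that the residual matches the stated normalisation $\Mt_t = \int_0^t \YY_s\otimes d\WW_s$ and $[\Iit]_t = {\Kt}_t\otimes\Id$ (the predictable quadratic variation of $\Mt$). The Kronecker and $\vectorise$ manipulations of the second step are routine, and the local-martingale property in the last step is immediate from the theory of stochastic integration against continuous martingales.
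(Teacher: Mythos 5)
Your proof is correct and is essentially the paper's own argument: the paper likewise starts from the closed form $\widehat{\boldPsi}_t = [\Iit]_t^{-1}\Iit_t$ (Theorem \ref{theorem:mle_cts_vec_psi}) and obtains the decomposition $\Iit_t = [\Iit]_t\,\boldPsi + \Mt_t$ by inserting the Girsanov/SDE identity $d\YY^c_s = -\Q\YY_s\,ds + d\WW_s$ into Definition \ref{definition:node-level-response}, citing Example 8.3.6 of \cite{kuchler1997exponentialbook} and leaving your Kronecker--vectorisation algebra implicit. The $\Si$-bookkeeping you flag is indeed the only delicate point---strictly the remainder is $-\int_0^t \YY_s\otimes\Si^{-1}\,d\WW_s$, a square-integrable martingale (by the finite second moments of $\YY$) with quadratic variation ${\Kt}_t\otimes\Si^{-1}$, and both the sign and the $\Si^{-1}$ are absorbed into the definition of $\WW$ exactly as you anticipate; the paper's own statement glosses over this normalisation in writing $[\Iit]_t = {\Kt}_t\otimes\Id$.
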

\begin{proof}
	See Appendix \ref{proof:lemma:martingale-decomposition}.
\end{proof}

Similarly to Section \ref{section:network-level-asymptotics}, we denote by $\left(\proba_\YY^{\boldPsi}: \ \boldPsi \in \widehat{\boldsymbol{\Psi}}\right)$ the statistical models of \levy-driven Ornstein-Uhlenbeck processes with respect to the likelihood from Proposition \ref{proposition:node-level-likelihood} indexed on $\widehat{\boldsymbol{\Psi}}$. We obtain the local asymptotic normality for this sequence:
\begin{lemma}
\label{lemma:node-asymptotic-normal}
	 The family of statistical models $\left(\proba_\YY^{\boldPsi}:\ \boldPsi \in \widehat{\boldsymbol{\Psi}}\right)$ with respect to $\left(\mathcal{L}_t(\boldPsi;\YY): \ \boldTheta \in \widehat{\boldsymbol{\Psi}}\right)$ is locally asymptotically normal.
\end{lemma}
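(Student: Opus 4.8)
The plan is to verify the defining local quadratic (LAN) expansion of the log-likelihood ratio directly from the exponential-family form of $\mathcal{L}_t(\boldPsi;\YY)$ in Proposition \ref{proposition:node-level-likelihood}, following the same route as Lemma \ref{lemma:network-asymptotic-normal}. Fix an interior point $\boldPsi \in \widehat{\boldsymbol{\Psi}}$, a local direction $u \in \R^{d^2}$, and the rate $t^{1/2}$, and set $\boldPsi_t := \boldPsi + t^{-1/2}u$. Writing $\log\mathcal{L}_t(\boldPsi;\YY) = \boldPsi^\top \Iit_t - \tfrac12 \boldPsi^\top [\Iit]_t \boldPsi$ and expanding the difference, the algebra collapses (using symmetry of $[\Iit]_t$) to
\begin{equation*}
\log\frac{\mathcal{L}_t(\boldPsi_t;\YY)}{\mathcal{L}_t(\boldPsi;\YY)} = t^{-1/2}u^\top\left(\Iit_t - [\Iit]_t\boldPsi\right) - \tfrac12 u^\top\left(t^{-1}[\Iit]_t\right)u .
\end{equation*}
By Lemma \ref{lemma:martingale-decomposition} the bracketed score term is exactly the martingale remainder, $\Iit_t - [\Iit]_t\boldPsi = \Mt_t$, so with the central sequence $\Delta_t := t^{-1/2}\Mt_t$ and $\Gamma_t := t^{-1}[\Iit]_t$ the ratio reads $u^\top\Delta_t - \tfrac12 u^\top \Gamma_t u$. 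It then remains to show that $\Gamma_t$ converges to a deterministic positive definite matrix $\Gamma$ and that $\Delta_t \xrightarrow{\ \mathcal{D}\ } \mathcal{N}(\boldsymbol{0},\Gamma)$ under $\proba_\YY^{\boldPsi}$.

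For the quadratic term, recall from Definition \ref{definition:node-level-response} that $[\Iit]_t = \Kt_t \otimes \Si^{-1}$ with $\Kt_t = \int_0^t \YY_s\YY_s^\top ds$. Applying the ergodic theorem (Proposition \ref{proposition:ergodicity-vector}) with $g(y)=yy^\top$, which is $\pi$-integrable since $\YY$ has finite second moments, gives $t^{-1}\Kt_t \to \boldsymbol{K}_\infty := \E(\YY_\infty\YY_\infty^\top)$ almost surely, hence $\Gamma_t \to \Gamma := \boldsymbol{K}_\infty \otimes \Si^{-1}$. This limit is deterministic, which is precisely what places the model in the genuine LAN regime rather than merely the mixed-normal one; moreover it is positive definite, since $\Si \in \Ss^{++}$ by assumption and $\boldsymbol{K}_\infty$ is positive definite because the nondegenerate Gaussian part makes $\YY_\infty$ genuinely $d$-dimensional (cf.\ Lemma \ref{lemma:square-kronecker-product}), so the Kronecker product of two positive definite matrices is positive definite.

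The main obstacle is the central limit theorem for $\Delta_t$. Since $\Mt_t$ is driven by the continuous Gaussian martingale part of $\LL$, it is a continuous local martingale under $\proba_\YY^{\boldPsi}$, and the score-martingale (Bartlett-type) identity gives that its predictable bracket coincides with the observed information, $\langle \Mt\rangle_t = [\Iit]_t$, so that $\langle\Delta\rangle_t = t^{-1}[\Iit]_t = \Gamma_t \to \Gamma$ almost surely by the previous step. Because $\Mt$ is continuous, no Lindeberg or jump-negligibility condition is required, and the multivariate central limit theorem for continuous local martingales with deterministic limiting bracket yields $\Delta_t \xrightarrow{\ \mathcal{D}\ } \mathcal{N}(\boldsymbol{0},\Gamma)$. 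Combining the two limits, and since $\Gamma$ is deterministic so that Slutsky's lemma applies to the product $u^\top\Gamma_t u$, the log-likelihood ratio converges in distribution to $u^\top\Delta - \tfrac12 u^\top\Gamma u$ with $\Delta\sim\mathcal{N}(\boldsymbol{0},\Gamma)$, which is the LAN property with central sequence $\Delta_t$ and information $\Gamma$; uniformity of the remainder in $u$ over compacts is immediate because the displayed expansion is exact. The one care point is to justify combining the almost-sure ergodic convergence of the random bracket with the distributional martingale convergence, which, $\Gamma$ being deterministic, reduces to Slutsky exactly as in the $\boldTheta$-case of Lemma \ref{lemma:network-asymptotic-normal}.
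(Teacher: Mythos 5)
Your proposal is correct and follows essentially the same route as the paper: the paper's own proof is a one-line reference to the argument of Lemma \ref{lemma:network-asymptotic-normal} carried out in higher dimension, and that argument is exactly what you spell out --- the exact quadratic expansion of the log-likelihood ratio, identification of the score $\Iit_t - [\Iit]_t\boldPsi = \Mt_t$ as a continuous square-integrable martingale (the paper's $\HH^{(s)}$/$\HH^{(d)}$ decomposition in the $\boldTheta$ case, your Lemma \ref{lemma:martingale-decomposition} here), ergodic convergence of $t^{-1}[\Iit]_t$ to the deterministic positive definite matrix $\E(\YY_\infty\YY_\infty^\top)\otimes\Si^{-1}$, and a martingale CLT (Theorem A.7.7 of K\"uchler--S{\o}rensen in the paper, your continuous-local-martingale CLT) combined via Slutsky. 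Your write-up is a faithful, and more detailed, rendering of the intended proof.
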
 
\begin{proof}
We can apply a similar argument to the proof of Lemma \ref{lemma:network-asymptotic-normal} in $\R^d$.
\end{proof}

As presented in Theorem \ref{th:clt-theta}, we obtain a central limit theorem for $\boldPsi$-GrOU with continuous-time observations as follows:

\begin{theorem}
\label{th:cv-cts-time}
Assume Assumptions \ref{assumption:A-known} \& \ref{assumption:strong-solution-vector} hold and that Prop.\ \ref{proposition:node-level-psi-identiability} can be applied. In addition, suppose that $\E\left[\exp(\boldPsi^\top \A^\Si_t) \right] < \infty$ for $t$ large enough and $\boldPsi \in \widehat{\boldsymbol{\Psi}}$. Then, $\widehat{\boldPsi}_t$ is consistent and we obtain:
$$t^{1/2}\left(\widehat{\boldsymbol{\psi}}_t - \boldsymbol{\psi}\right) \xrightarrow{\ \mathcal{D} \ } \mathcal{N}\left(\boldsymbol{0}_{d^2}, \E\left( \YY_\infty \YY_\infty^\top\right)^{-1} \otimes \Si \right), \quad \text{as $t \rightarrow \infty$,}$$
where we recall that $\E\left( \YY_\infty \YY_\infty^\top\right) = \int_0^\infty e^{-s\Q}  \Si   e^{-s\Q^\top} ds.$ Moreover, $\widehat{\boldPsi}_t$ is efficient in the sense of H{\'a}jek-Le Cam's convolution theorem.
\end{theorem}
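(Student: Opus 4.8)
The plan is to follow the blueprint of Theorem \ref{th:clt-theta}, transplanting it from $\R^2$ to $\R^{d^2}$, and to exploit the martingale decomposition already isolated in Lemma \ref{lemma:martingale-decomposition}. That lemma writes $\widehat{\boldPsi}_t - \boldPsi = [\Iit]_t^{-1}\Mt_t$, where $\Mt_t$ is the (continuous) martingale part of the score and, by Definition \ref{definition:node-level-response}, $[\Iit]_t = \Kt_t\otimes\Si^{-1}$ with $\Kt_t = \int_0^t\YY_s\YY_s^\top ds$. Rescaling, I would write $t^{1/2}(\widehat{\boldPsi}_t - \boldPsi) = (t^{-1}[\Iit]_t)^{-1}\cdot(t^{-1/2}\Mt_t)$, so that the statement reduces to two largely independent ingredients: an almost-sure limit for the normalised information matrix, and a central limit theorem for the normalised martingale.

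For the first ingredient, I would apply the ergodic theorem (Proposition \ref{proposition:ergodicity-vector}) to $g(y) = yy^\top$, which is $\pi$-integrable because $\YY$ has finite second moments, giving $t^{-1}\Kt_t\to\E(\YY_\infty\YY_\infty^\top)$ almost surely; Lemma \ref{lemma:square-kronecker-product} guarantees that $\Kt_t$, hence $[\Iit]_t$, is eventually invertible and that the limit is positive definite. Tensoring with $\Si^{-1}$ yields $t^{-1}[\Iit]_t\to\E(\YY_\infty\YY_\infty^\top)\otimes\Si^{-1}$ a.s. Consistency of $\widehat{\boldPsi}_t$ then drops out by combining this with the martingale strong law $t^{-1}\Mt_t\to 0$ a.s. (valid since $\langle\Mt\rangle_t = [\Iit]_t = O(t)$), exactly as in Proposition \ref{proposition:vec-cts-clt}.

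For the second ingredient, I would invoke the central limit theorem for continuous local martingales. Since $\Mt$ is continuous (its integrand is driven by the Brownian part of $\LL$), no Lindeberg/jump condition is needed, and it suffices that $t^{-1}\langle\Mt\rangle_t$ converge almost surely to a deterministic positive-definite matrix; as $\Mt$ is the martingale part of the score, its predictable quadratic variation coincides with the observed information, $\langle\Mt\rangle_t = [\Iit]_t$, so the same ergodic argument gives the limit $\E(\YY_\infty\YY_\infty^\top)\otimes\Si^{-1}$. The exponential-moment hypothesis $\E[\exp(\boldPsi^\top\A^\Si_t)]<\infty$ certifies that the likelihood process is a genuine (not merely local) martingale, licensing the measure change and the application of the CLT. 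This delivers $t^{-1/2}\Mt_t\xrightarrow{\ \mathcal{D}\ }\mathcal{N}(\boldsymbol{0}_{d^2}, \E(\YY_\infty\YY_\infty^\top)\otimes\Si^{-1})$.

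Finally, I would stitch the two pieces together with Slutsky's lemma. Writing $K := \E(\YY_\infty\YY_\infty^\top)$, the deterministic factor converges to $(K\otimes\Si^{-1})^{-1} = K^{-1}\otimes\Si$, so the limit is a centred Gaussian with covariance $(K^{-1}\otimes\Si)(K\otimes\Si^{-1})(K^{-1}\otimes\Si)$; the mixed-product rule $(A\otimes B)(C\otimes D) = AC\otimes BD$ collapses this to $K^{-1}\otimes\Si$, the claimed variance. Efficiency then follows as in Theorem \ref{th:clt-theta}: Lemma \ref{lemma:node-asymptotic-normal} supplies local asymptotic normality, and the attained asymptotic covariance $K^{-1}\otimes\Si$ coincides with the inverse of the limiting Fisher information read off from the LAN expansion, so the H{\'a}jek--Le Cam convolution theorem identifies $\widehat{\boldPsi}_t$ as efficient. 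I expect the main obstacle to be the martingale CLT step — securing the deterministic positive-definite limit of the normalised quadratic variation and using the exponential-moment condition to pass from a local to a true martingale — together with the careful Kronecker bookkeeping that makes the asymptotic variance collapse to $K^{-1}\otimes\Si$ and matches it to the LAN information for the efficiency claim.
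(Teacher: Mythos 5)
Your proof is correct and reaches the paper's conclusion through a slightly different toolchain. The paper's own proof mirrors Proposition \ref{proposition:vec-cts-clt}: it defines the cumulant $C_t(\boldPsi) = \tfrac{1}{2}\boldPsi^\top [\Iit]_t \boldPsi$, establishes the ergodic limit $t^{-1}[\Iit]_t \to \E(\YY_\infty\YY_\infty^\top)\otimes\Si^{-1}$ and the stochastic-boundedness/finite-variation hypotheses, and then invokes the exponential-family MLE theorem (Theorem 8.3.4 of \cite{kuchler1997exponentialbook}) as a black box to obtain the randomly-normed CLT $[\Iit]_t^{1/2}(\widehat{\boldPsi}_t-\boldPsi)\xrightarrow{\ \mathcal{D}\ }\mathcal{N}(\boldsymbol{0}_{d^2},\Isquare)$, finishing with Slutsky's lemma exactly as you do. You bypass that citation: starting from the decomposition $\widehat{\boldPsi}_t-\boldPsi=[\Iit]_t^{-1}\Mt_t$ of Lemma \ref{lemma:martingale-decomposition}, you apply the CLT for continuous local martingales, using the score-martingale identity $\langle\Mt\rangle_t=[\Iit]_t$ together with the same ergodic limit; this essentially unfolds what Theorem 8.3.4 does internally, and indeed the paper itself falls back on the underlying martingale CLT (Theorem A.7.7 of \cite{kuchler1997exponentialbook}) when proving LAN in Lemma \ref{lemma:network-asymptotic-normal}. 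Note that your identification $\langle\Mt\rangle_t=[\Iit]_t=\Kt_t\otimes\Si^{-1}$ is the coherent reading of Lemma \ref{lemma:martingale-decomposition}, whose stated form ($\Kt_t\otimes\Id$, with no $\Si^{-1}$ inside $\Mt$) is notationally inconsistent with Definition \ref{definition:node-level-response}. Two things your route buys: the Kronecker bookkeeping $(\E(\YY_\infty\YY_\infty^\top)\otimes\Si^{-1})^{-1}=\E(\YY_\infty\YY_\infty^\top)^{-1}\otimes\Si$ is made explicit rather than implicit in the Slutsky step, and your efficiency argument (LAN from Lemma \ref{lemma:node-asymptotic-normal} plus the H\'ajek--Le Cam convolution theorem, with the attained covariance matching the inverse of the LAN information) actually supplies a step the paper's appendix proof omits, since there the efficiency claim is asserted in the statement but only argued in the $\boldTheta$-case of Theorem \ref{th:clt-theta}.
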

\begin{proof}
	See Appendix \ref{proof:th:cv-cts-time}.
\end{proof}

We have derived an essential result for a general dynamics matrix (e.g.\ $\vectorise^{-1}(\boldPsi)$) with diagonal elements dominating the average off-diagonal parameters row-wise. We extend the result to include the network topology and derive a corollary for such a graph-constrained estimator as follows:
\begin{corollary}
	In the same setting as in Theorem \ref{th:cv-cts-time}, we have:
$$t^{1/2}\left\{\vectorise\left[\Q(\widetilde{\boldPsi}_t)\right] - \vectorise\Big[\Q(\boldPsi)\Big]\right\} \xrightarrow{\ \mathcal{D} \ } \mathcal{N}\left(\boldsymbol{0}_{d^2},\rmD_{\A} \cdot \E\left( \YY_\infty \YY_\infty^\top\right)^{-1} \otimes \Si \cdot \rmD_{\A}\right),\quad \text{as $t \rightarrow \infty$,}$$
where $\rmD_{\A} := \diag\left(\vectorise(\Id +\An)\right)$.
\end{corollary}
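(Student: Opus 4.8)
The plan is to identify the map $\boldPsi\mapsto \vectorise[\Q(\boldPsi)]$ as a fixed \emph{linear} transformation and to transport the central limit theorem of Theorem~\ref{th:cv-cts-time} through it, using the elementary fact that a deterministic linear image of an asymptotically Gaussian sequence is again asymptotically Gaussian.

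First I would record the algebraic identity linking the graph-constrained dynamics matrix to the unconstrained parameter. Applying the standard vectorisation rule for Hadamard products, namely $\vectorise(\mA\odot\mB)=\diag(\vectorise(\mA))\,\vectorise(\mB)$, to Equation~\eqref{eq:Q-using-psi} with $\mA=\Id+\An$ and $\mB=\vectorise^{-1}(\boldPsi)$, and using $\vectorise(\vectorise^{-1}(\boldPsi))=\boldPsi$, I obtain $\vectorise[\Q(\boldPsi)]=\diag(\vectorise(\Id+\An))\,\boldPsi=\rmD_{\A}\,\boldPsi$. The same identity applied to the estimator gives $\vectorise[\Q(\widetilde{\boldPsi}_t)]=\rmD_{\A}\,\widehat{\boldPsi}_t$, so the centred quantity factors cleanly as $\vectorise[\Q(\widetilde{\boldPsi}_t)]-\vectorise[\Q(\boldPsi)]=\rmD_{\A}(\widehat{\boldPsi}_t-\boldPsi)$.

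Second, I would multiply by $t^{1/2}$ and invoke Theorem~\ref{th:cv-cts-time}, which supplies $t^{1/2}(\widehat{\boldPsi}_t-\boldPsi)\xrightarrow{\ \mathcal D\ }\mathcal N(\boldsymbol 0_{d^2},\E(\YY_\infty\YY_\infty^\top)^{-1}\otimes\Si)$. Because $\rmD_{\A}$ is a fixed deterministic matrix, the continuous mapping theorem (equivalently, a Cram\'er--Wold / characteristic-function argument for linear images of Gaussians) yields $t^{1/2}\rmD_{\A}(\widehat{\boldPsi}_t-\boldPsi)\xrightarrow{\ \mathcal D\ }\mathcal N(\boldsymbol 0_{d^2},\rmD_{\A}[\E(\YY_\infty\YY_\infty^\top)^{-1}\otimes\Si]\rmD_{\A}^\top)$. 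Since $\rmD_{\A}=\diag(\vectorise(\Id+\An))$ is diagonal, hence symmetric, we have $\rmD_{\A}^\top=\rmD_{\A}$, which is precisely the asserted limiting covariance.

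The computation is essentially routine once this linear identity is in place, so there is no serious analytic obstacle; the only point deserving care is that $\rmD_{\A}$ is \emph{singular} (its diagonal carries a zero at every off-diagonal coordinate $(i,j)$ with $a_{ij}=0$), so the limit is a \emph{degenerate} Gaussian supported on the coordinate subspace indexed by the diagonal together with the existing edges. This is the expected behaviour: the entries of $\Q$ corresponding to absent edges are identically zero under the graph constraint and are estimated as zero with no asymptotic variance. I would therefore state the conclusion as a possibly degenerate normal limit and, if a non-degenerate covariance is wanted, restrict the statement to the non-vanishing coordinates of $\rmD_{\A}$.
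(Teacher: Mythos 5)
Your proof is correct and takes essentially the same route as the paper: the Hadamard-product vectorisation identity $\vectorise[\Q(\boldPsi)] = \diag(\vectorise(\Id+\An))\,\boldPsi = \rmD_{\A}\,\boldPsi$, followed by pushing the limit of Theorem \ref{th:cv-cts-time} through the fixed (symmetric, since diagonal) linear map $\rmD_{\A}$. Your closing remark that $\rmD_{\A}$ is singular, so the limit is a degenerate Gaussian supported on the coordinates corresponding to the diagonal and the existing edges, is a correct refinement that the paper leaves implicit.
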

\begin{proof}
	By a property of the Hadamard product, observe that $\vectorise\left(\rmQ(\boldPsi)\right) =  \vectorise(\Id +\An) \odot \boldPsi =  \diag(\vectorise(\Id +\An)) \cdot \boldPsi = \rmD_{\A} \cdot \boldPsi$. By Theorem \ref{th:cv-cts-time}, the result follows directly.
\end{proof}
The term $\rmD_{\A}$ highlights the application of the network topology and yields a generalised form of the two-dimensional central limit theorem given in Corollary \ref{corollary:2d-cts-clt}. 

\begin{remark}
\label{remark:second-var-identifiability}
\cite{fasen2013} proved a similar central limit theorem but for the regression on $e^{-\Q}$ itself which remains an alternative to the MLE approach, but the identifiability issues mentioned below Remark \ref{remark:var}, in Section \ref{section:stationary-solution} hinders the direct estimation of $\mQ$ from $e^{-\Q}$.
\end{remark}
%

Until this point, we have assumed the availability of the adjacency matrix. However, sparse stochastic processes have become increasingly influential to handle high-dimension problems \citep{Gaiffas2019SparseOUProcess, ma2021sparseAutoregressions, belomestny2019sparse}. Regularisation  through a penalty on the model parameters is an important component of this literature and we show in the next section that general \levy-driven OU processes can be consistently transformed into GrOU processes in this context.

\section{Asymptotic theory of the Adaptive Lasso regularisation}
\label{section:asymptotics-adaptive-lasso}
\myRed{A key limitation of the MLEs is that the adjacency matrix should be fully specified: this limits the applicability of the GrOU process to datasets where the graph topology is known (as in Assumption \ref{assumption:A-known}). Also, regularisation techniques are a powerful tool to create sparse graph-like structure for high-dimensional problems \citep{chen2020community, ma2021sparseAutoregressions}. To prove that such tools can be used in our setting, we propose an Adaptive Lasso scheme and show its consistency and asymptotic normality.
\subsection{Adaptive Lasso Regularisation}
Applying an $L^1$-penalty on the dynamics matrix $\Q$ to the log-likelihood, called a Lasso regression, is a common practice to introduce sparsity into $\Q$. Then, the regularised process can be interpreted as a proper GrOU process. In addition, a parameter allows the practitioners to tune how sparse the then-estimated adjacency matrix should be. 
\begin{notation}
	The support of a vector or a matrix $x$ is denoted $\supp(x)$ and is defined as the set of indices of non-null coordinates of $x$. Additionnally, given a set of indices $\gI$, we denote by $x_{|\gI}$, the restriction of $x$ to the indices in $\gI$ and $x_{|\gI\times\gI}$ to the indices in $\gI\times\gI$.
\end{notation}
Similarly, Adaptive Lasso (AL) regularisation schemes leverages a penalty that takes into account any $t^{1/2}$-consistent estimator---the MLE herein---to provide better theoretical guarantees such as \emph{consistency in variable selection} \citep[Section 2.6]{buhlmann2011statistics} or asymptotic normality. The former is defined as the support of the estimator converging to the support of the true parameter asymptotically.
\subsection{Definition}
An AL scheme \citep{Gaiffas2019SparseOUProcess} applied on a \levy-driven OU process $\YY$ with \emph{unknown} dynamics matrix $\rmQ_0$ yields a GrOU-like process with non-trivial adjacency matrix $\A$. It is defined by
\begin{equation}
	\label{eq:adaptive-lasso}
	\widehat{\rmQ}_{\AL,t} := \argmax_{\Q} \ell_t(\rmQ) - \lambda \|\rmQ \odot |\widehat{\rmQ}_t|^{-\gamma} \|_1,
\end{equation}
for fixed parameters $\lambda \geq 0$ and $\gamma > 0$. Also, $\odot$ denotes the Hadamard product and the denominator of the penalty $|\widehat{\rmQ}_t|^{-\gamma}$ is evaluated elementwise. The log-likelihood is given by
$$\ell_t(\rmQ) = -\int_0^t \langle \rmQ \YY_{s}, d\YY^c_s\rangle_{\Si} - \frac{1}{2}\int_0^t\langle \rmQ \YY_s, \rmQ \YY_s\rangle_{\Si} ds,$$
with the corresponding $d \times d$ MLE matrix
$$\widehat{\rmQ}_t := - {\Kt}_t^{-1} \cdot \int_0^t \YY_{s} \cdot (d\YY^c_s)^\top.$$ 
The MLE components are almost-surely non-zero and penalise more the entries that are expected to be zero. }

\myRed{ Conditional on the knowledge of $\rmQ_0$, we show two oracle properties: (a) the scheme is consistent in variable selection: i.e.\ the support of $\widehat{\rmQ}_{\AL,t}$ converges to the support of the true parameter $\rmQ_0$ as $t \rightarrow \infty$; (b) the estimator is asymptotically normal as $t \rightarrow \infty$ over the support of the true parameter. For instance, a Lasso regression with Gaussian noise is not consistent \citep{zou2006adaptive}.
\subsection{Asymptotic properties}
 The parameter $\lambda$ is implicitly a function of the time horizon $t$, i.e.\ $\lambda = \lambda(t)$. We present an equivalent to Th.\ 4, \cite{Gaiffas2019SparseOUProcess} for \levy-driven OU processes.
\begin{theorem}{(Adapted from Th.\ 4, \cite{Gaiffas2019SparseOUProcess})}
\label{theorem:adaptive-lasso}
	Suppose that Assumptions \ref{assumption:A-known} \& \ref{assumption:strong-solution-vector} hold for a \levy-driven OU process $\YY$ with a true but unknown dynamics matrix $\rmQ_0$. For a fixed $\gamma > 0$, assume that $\lambda = \lambda(t)$ verifies $\lambda(t) t^{1/2} \rightarrow 0$ and $\lambda(t) t^{(1+\gamma)/2}\rightarrow \infty$ as $t \rightarrow \infty$. Then, under the assumption that $\rmQ_0$ is known, we obtain:
	\begin{enumerate}
		\item Consistency of the variable selection: $\proba\left(\supp(\widehat{\rmQ}_{\AL,t}) = \supp(\rmQ_0) \right) \rightarrow 1$ as $t \rightarrow \infty$.
		\item Asymptotic normality: 
		 $$t^{1/2}\left(\vectorise(\widehat{\rmQ}_{\AL,t}) - \vectorise(\rmQ_0)\right)_{|\gQ_0} \xrightarrow{\ \mathcal{D} \ } \mathcal{N}\left(\boldsymbol{0}_{d^2}, \E\left( \YY_\infty \YY_\infty^\top\right)^{-1}_{|\gQ_0 \times \gQ_0} \otimes \Si_{|\gQ_0 \times \gQ_0}  \right), \quad \text{as $t \rightarrow \infty$,}$$
		 where $\gQ_0 := \supp(\rmQ_0)$.
	\end{enumerate}
\end{theorem}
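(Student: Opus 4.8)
The plan is to adapt the standard two-step oracle-property argument (in the spirit of Zou) to this continuous-time, ergodic setting, building on the martingale decomposition of Lemma \ref{lemma:martingale-decomposition} and the central limit theorem of Theorem \ref{th:cv-cts-time}. Writing $\vq := \vectorise(\rmQ)$ and $\vq_0 := \vectorise(\rmQ_0)$, Proposition \ref{proposition:node-level-likelihood} turns \eqref{eq:adaptive-lasso} into the maximisation of the concave map
$$
G_t(\vq) = \vq^\top \Iit_t - \tfrac{1}{2}\vq^\top [\Iit]_t \vq - \lambda(t) \sum_k w_k\, |q_k|, \qquad w_k := \big|[\vectorise(\widehat{\rmQ}_t)]_k\big|^{-\gamma},
$$
with $[\Iit]_t = \Kt_t \otimes \Si^{-1}$ and $\Kt_t = \int_0^t \YY_s\YY_s^\top ds$. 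I would reparametrise around the truth, $\vq = \vq_0 + t^{-1/2}\vu$, and study $\Phi_t(\vu) := G_t(\vq_0 + t^{-1/2}\vu) - G_t(\vq_0)$, whose unique maximiser is $\widehat{\vu}_t = t^{1/2}\big(\vectorise(\widehat{\rmQ}_{\AL,t}) - \vq_0\big)$. Using $\Iit_t - [\Iit]_t\vq_0 = \Mt_t$ from Lemma \ref{lemma:martingale-decomposition}, the smooth part of $\Phi_t$ is $t^{-1/2}\vu^\top \Mt_t - \tfrac12 \vu^\top (t^{-1}[\Iit]_t)\vu$.

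For the limits, ergodicity (Proposition \ref{proposition:ergodicity-vector}) gives $t^{-1}\Kt_t \to \E(\YY_\infty \YY_\infty^\top)$ almost surely, hence $t^{-1}[\Iit]_t \to \Gamma := \E(\YY_\infty\YY_\infty^\top)\otimes\Si^{-1}$, while the martingale CLT underlying Theorem \ref{th:cv-cts-time} gives $t^{-1/2}\Mt_t \xrightarrow{\ \mathcal{D}\ } W \sim \mathcal N(\boldsymbol{0}_{d^2}, \Gamma)$. The penalty is handled coordinate-wise. On $\gQ_0 := \supp(\rmQ_0)$ one has $w_k \to |q_{0,k}|^{-\gamma}$ a.s. and $t^{1/2}\big(|q_{0,k}+t^{-1/2}u_k| - |q_{0,k}|\big) \to u_k \sign(q_{0,k})$, so the contribution is $O\big(\lambda(t) t^{-1/2}\big)$ and vanishes under the upper rate condition $\lambda(t) t^{1/2}\to 0$; off $\gQ_0$ the $t^{1/2}$-consistency of $\widehat{\rmQ}_t$ makes $w_k$ of order $t^{\gamma/2}$, so the contribution scales like $\lambda(t)\, t^{(\gamma-1)/2}|u_k|$, which the lower growth condition on $\lambda(t)$ forces to $+\infty$ whenever $u_k\neq 0$. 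Thus $\Phi_t$ converges, in finite-dimensional distribution, to $\Phi(\vu) = \vu^\top W - \tfrac12\vu^\top\Gamma\vu$ when $u_k = 0$ for all $k\notin\gQ_0$, and to $-\infty$ otherwise. Since each $\Phi_t$ is concave, the convergence-of-argmax theorem for convex processes yields $\widehat{\vu}_t \xrightarrow{\ \mathcal D\ } \argmax_\vu \Phi(\vu)$, a limit supported on $\gQ_0$ and equal there to $\big(\Gamma_{|\gQ_0\times\gQ_0}\big)^{-1}W_{|\gQ_0}$; this is the centred Gaussian with the covariance announced in part (2).

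Part (1) splits into the absence of false negatives and of false positives. The former is immediate from part (2): asymptotic normality on $\gQ_0$ gives $\proba\big([\vectorise(\widehat{\rmQ}_{\AL,t})]_k\neq 0\big)\to 1$ for each $k\in\gQ_0$. For the latter, fix $k\notin\gQ_0$ and use the Karush--Kuhn--Tucker conditions for \eqref{eq:adaptive-lasso}: on $\{[\vectorise(\widehat{\rmQ}_{\AL,t})]_k\neq 0\}$ stationarity reads $\big(\Iit_t - [\Iit]_t\, \vectorise(\widehat{\rmQ}_{\AL,t})\big)_k = \lambda(t)\, w_k\, \sign\big([\vectorise(\widehat{\rmQ}_{\AL,t})]_k\big)$. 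Writing $\Iit_t - [\Iit]_t\,\vectorise(\widehat{\rmQ}_{\AL,t}) = \Mt_t + [\Iit]_t\big(\vq_0 - \vectorise(\widehat{\rmQ}_{\AL,t})\big)$ and dividing by $t^{1/2}$, the left-hand side is $O_p(1)$ (its first term by the CLT, its second by $t^{-1}[\Iit]_t\to\Gamma$ together with the $t^{1/2}$-rate of $\widehat{\rmQ}_{\AL,t}$ just obtained), whereas $t^{-1/2}\lambda(t) w_k$ diverges. Hence the probability of this event tends to $0$, giving $\proba\big(\supp(\widehat{\rmQ}_{\AL,t}) = \supp(\rmQ_0)\big)\to 1$.

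The main obstacle is concentrated in the off-support coordinates. First, the argument needs the weights $w_k$, $k\notin\gQ_0$, to diverge at a \emph{precise} rate: this requires not merely the marginal CLT of Theorem \ref{th:cv-cts-time} but that $t^{1/2}[\vectorise(\widehat{\rmQ}_t)]_k$ be tight and non-degenerate, so that $w_k$ is sharply of order $t^{\gamma/2}$, and the bookkeeping reconciling this rate with the two hypotheses on $\lambda(t)$ is the delicate point. Second, the KKT step relies on an a priori $t^{1/2}$-rate for the \emph{penalised} estimator itself, which is not given but must be extracted from the $\argmax$ convergence of the first step; the two parts therefore have to be sequenced so that this rate is in hand before selection consistency is argued. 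By comparison, the ergodic and martingale limits, and the identification of the limiting quadratic form, are routine consequences of Proposition \ref{proposition:ergodicity-vector} and the CLT machinery already established.
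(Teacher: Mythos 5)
Your overall strategy---localise as $\vq=\vq_0+t^{-1/2}\vu$, split the localised objective into a smooth quadratic part (via Lemma \ref{lemma:martingale-decomposition}, ergodicity and a martingale CLT) plus a coordinate-wise analysis of the penalty, pass to the argmax by convexity, and then run a KKT/stationarity argument to rule out false positives---is essentially the paper's own proof, with the same sequencing (normality first, then selection consistency); your explicit appeal to an argmax theorem for concave processes is, if anything, cleaner than the paper's implicit passage to the limiting maximiser, and your tightness/non-degeneracy worry about the off-support weights is already settled by Theorem \ref{th:cv-cts-time} (convergence in law to a non-degenerate Gaussian gives both tightness of $t^{1/2}[\vectorise(\widehat{\rmQ}_t)]_k$ and, since the limit has no atom at zero, that $|t^{1/2}[\vectorise(\widehat{\rmQ}_t)]_k|^{-\gamma}$ is bounded away from $0$ with probability tending to one).

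There is, however, a genuine gap in your rate bookkeeping, located exactly at the point you flag as delicate. You keep the penalty as $\lambda(t)\sum_k w_k|q_k|$, i.e.\ Equation \eqref{eq:adaptive-lasso} read literally. Off the support, your own computation then gives a contribution of order $\lambda(t)\,t^{(\gamma-1)/2}|u_k|$, and you assert that the hypothesis $\lambda(t)\,t^{(1+\gamma)/2}\rightarrow\infty$ forces this to diverge. It does not: $\lambda(t)\,t^{(\gamma-1)/2}=t^{-1}\cdot\lambda(t)\,t^{(\gamma+1)/2}$, and for instance $\lambda(t)=t^{-(1+\gamma)/2}\log t$ satisfies both stated hypotheses while $\lambda(t)\,t^{(\gamma-1)/2}=t^{-1}\log t\rightarrow 0$. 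For such a $\lambda$ the penalty is asymptotically negligible in your localised objective, the limit of $\Phi_t$ is the unrestricted Gaussian quadratic, and both your argmax step and your KKT step collapse---in the latter you need $t^{-1/2}\lambda(t)w_k\rightarrow\infty$, which up to an $O_p(1)$ factor is again $\lambda(t)\,t^{(\gamma-1)/2}$. The paper avoids this by an explicit (if tersely justified) rescaling at the very start of its proof: ``we change the penalty rate from $\lambda$ to $\lambda t$.'' In other words, the theorem's two rate conditions are calibrated to the penalty $\lambda(t)\,t\,\|\rmQ \odot |\widehat{\rmQ}_t|^{-\gamma}\|_1$, equivalently to penalising the time-averaged log-likelihood $t^{-1}\ell_t$. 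Under that normalisation the on-support term is $O\bigl(\lambda(t)\,t^{1/2}\bigr)\rightarrow 0$ and the off-support term is of order $\lambda(t)\,t^{(1+\gamma)/2}\rightarrow\infty$, matching the hypotheses exactly. Your proof therefore needs this rescaling inserted up front (or, equivalently, the hypotheses restated as $\lambda(t)\,t^{-1/2}\rightarrow 0$ and $\lambda(t)\,t^{(\gamma-1)/2}\rightarrow\infty$); as written, the two key divergence claims do not follow from the stated conditions, and with them fall both parts of the theorem.
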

\begin{proof}
	See Appendix \ref{proof:theorem:adaptive-lasso}.
\end{proof}
Note that the adjacency matrix can therefore be estimated as follows:
$$(\widehat{\A}_{\AL, t})_{ij} := \indicator_{\{x\neq 0\}}\left((\widehat{\rmQ}_{\AL, t})_{ij}\right),$$
and the $\boldTheta$-GrOU inference can be applied next as a simplification step for high-dimensional applications although the impact of model misspecification is left for future research. 
Finally, the penalty parameter $\lambda$ can be chosen to reach a given sparsity criterion (trial-and-error) or by cross-validation \citep[Section 4.1]{Gaiffas2019SparseOUProcess}.}

\section{An extension to a volatility-modulated GrOU process}
\label{section:stoch-vol}

\myRed{Stationary noise distributions are usually too simplistic to explain the intrinsic variability of the data. Volatility modulation adds a stochastic scaling factor \citep{cai2016estimating, belomestny2019sparse} which follows its own dynamics to better represent exogenous source of uncertainty \citep{pigorsch2009DefinitionSemiPositiveMultOU, yang2020method} whilst a jump component helps to model unforeseen perturbations or rare calendar events \citep{barndorffVeraart2012StochVol}.}

We extend the framework of Section \ref{section:network-ou} to include a stochastic volatility modulation through a positive semidefinite Ornstein-Uhlenbeck (PSOU hereafter) process \citep{pigorsch2009multivariate, pigorsch2009DefinitionSemiPositiveMultOU} and a time-changed jump term. 

For the latter term, we adapt the univariate framework introduced in \cite{barndorffVeraart2012StochVol} to the multivariate case. We find that the volatility modulation and the jump term preserve the core properties of the model---i.e.\ its stationarity and ergodicity---which in turn imply that extensions of the results from Sections \ref{section:likelihood-and-estimators} and \ref{section:asymptotic-theory-continuous-times} hold.

\begin{notation}
	For a process $(\XX_t, \ t \geq 0) \subseteq \R^d$, we denote by $\varphi_{\XX_t}(\boldsymbol{u}):=\E\left[\exp\left(i\boldsymbol{u}^\top\XX_t\right)\right]$ its characteristic function at time $t$. Similarly, for an $\MdR$-valued process $(\boldsymbol{X}_t)$, we write $\varphi_{\boldsymbol{X}_t}(\boldsymbol{u}) := \E\left\{ \exp\left[i \trace(\boldsymbol{u}^\top\boldsymbol{X}_t)\right]\right\}$. Finally, we denote by $\log \varphi(\cdot)$ the  distinguished logarithm of $\varphi$ for an infinitely divisible distribution \citep[Lemma 7.6]{sato1999levy}
\end{notation}

\begin{remark}
	For a two-sided \Levy process $(\LLL_t, \ t \in \R) \subseteq \MdR$ and for adapted processes $(\boldsymbol{A}_t = (A_{ij,t}), \ t \geq 0)$, $(\boldsymbol{B}_t = (B_{ij,t}), \ t \geq 0)\subseteq \MdR$ with respect to $\LLL$, we denote by $\int_0^t \boldsymbol{A}_a d\boldsymbol{L}_s \boldsymbol{B}_s$ the matrix whose $(i,j)$-th element is given by $\sum_{k,l}\int_0^t A_{ik,s}  B_{lj,s} dL_{kl,s}$.
\end{remark}

\subsection{Model extension}
\label{section:stochastic-volatility-presentation}
Consider the continuous-time process $(\YY^{(v)}_t,\ t \geq 0)$ satisfying the stochastic differential equation
\begin{equation}
\label{eq:sde-ou-with-spou-jump-vol}
	d\YY^{(v)}_t = -\rmQ \YY^{(v)}_t dt + \Si_t^{1/2} d\WW_t +  d\JJ_{T_t}, \quad t \geq 0,
\end{equation}
where $(\Si_t, \ t \in \R)$ is a c{\`a}dl{\`a}g stochastic volatility (SV) process. In addition, $(\WW_t, \ t \in \R)$ is a d-dimensional Brownian motion process, $(T_t, \ t \in \R)$ is an increasing continuous process where $T_t \rightarrow \pm\infty$ $\proba_0$-a.s.\ as $t \rightarrow \pm\infty$ and $(\JJ_t, \ t \in \R)$ is a two-sided pure-jump \Levy process with characteristic triplet $(\gamma_\JJ, \boldsymbol{0}, \nu_{\JJ})$ with respect to the truncation function $\tau(\boldsymbol{z}) := \mathbb{I}_{\{\boldsymbol{x} \in \R^d : \|\boldsymbol{x}\| \leq 1\}}(\boldsymbol{z})$ (see Section \ref{section:levy-noise-details}).
Under standard regularity conditions (see Sections \ref{section:stoch-vol-term} \& \ref{section:pure-jump-component}), we know that the unique candidate for a stationary solution to Equation \eqref{eq:sde-ou-with-spou-jump-vol} is
\begin{equation}
\label{eq:stationary-solution-with-stoch-vol}
	\YY^{(v)}_t = \int_{-\infty}^t e^{-(t-s)\Q}\Si^{1/2}_s d\WW_s + \int_{-\infty}^t e^{-(t-s)\Q}d\JJ_{T_s}, \quad t \in \R,
\end{equation}
where both terms are well-defined by Corollary 4.1, \cite{basse2014stochastic} (see Sections \ref{section:stoch-vol-characteristic-function} \& \ref{section:time-changed-pure-jump-process-characteristic-function}). Note that we have now extended the domain from $t\geq 0$ to $t \in \R$ \citep[Remark 1]{Brockwell2009}. We study each term separately in Sections \ref{section:stochastic-volatility-component} \& \ref{section:pure-jump-component}.
%

Regarding the volatility process, consider a positive definite matrix $\VVV \in \Ss^{++}$ and a two-sided $d\times d$ matrix \Levy subordinator $(\LLL_t, \ t \in \R)$ \citep{barndorff2008matrix} such that $(\Si_t,\  t \in \R)$ is a stationary positive semidefinite Ornstein-Uhlenbeck (PSOU) process \citep{pigorsch2009DefinitionSemiPositiveMultOU}, i.e.\ given by

\begin{equation}
	\label{eq:sigma-statio-integral}
	\Si_t = \int_{-\infty}^t e^{-(t-s)\VVV} d\LLL_s e^{-(t-s)\VVV^\top}, \quad t \in \R.
\end{equation}
We recall the existence conditions of this stationary process in Section \ref{section:stoch-vol-inv-distribution-and-osd}.
\begin{assumption}
	We assume the independence between $\WW$, $\LLL$, $\JJ$ and $T$.
\end{assumption}

In the following two subsections, we characterise both terms presented in the stationary solution in Eq. \eqref{eq:stationary-solution-with-stoch-vol}. We then prove that the resulting process $(\YY^{(v)}_t)$ is mixing hence ergodic which requires additional definitions presented in the next section.

To prove the ergodicity of the model presented in Section \ref{section:stochastic-volatility-presentation}, we augment our framework with another class of stochastic mixed moving average (MMA) processes which have well-studied asymptotic behaviour such as the mixing and ergodic properties (see Section \ref{section:levy-bases-and-mma-processes}).

\subsection{\Levy bases, MMA processes and the mixing property}
\label{section:levy-bases-and-mma-processes}
In this section, we recall the definitions of \Levy bases, characteristic quadruplet and \levy-driven MMA processes.
\begin{definition}{\citep[Definition 3.1]{fuchs2013mixing}}
	A $d$-dimensional \Levy basis on $S\times \R$ is an $\R^d$-valued random measure $\Lambda = \{\Lambda(B): B\in\mathcal{B}_b(S \times \R)\}$ satisfying:
	\begin{enumerate}[(a)]
		\item the distribution of $\Lambda(B)$ is infinitely divisible for all $B \in \mathcal{B}_b(S\times \R)$;
		\item for any $n\in \N$ and pairwise disjoint sets $B_1,\dots,B_n \in \mathcal{B}_b(S \times \R)$ the random variables $\Lambda(B_1),\dots,\Lambda(B_n)$ are independent and
		\item for any pairwise disjoint sets $(B_i \in \mathcal{B}_b(S \times \R),\ i \in \N)$ satisfying $\bigcup_{n \in \N}B_n \in \mathcal{B}_b(S \times \R)$ the series $\sum_{n=1}^\infty \Lambda(B_n)$ converges almost surely and it holds that $\Lambda(\bigcup_{n\in \N}) = \sum_{n\in \N} \Lambda(B_n)$ almost surely.
	\end{enumerate}
\end{definition}
\begin{remark}
Here, we take $S= \Ss^{++}$ and note that a definition of $\Ss^{+}$-valued \Levy bases on $S\times \R$ would be formulated similarly.	
\end{remark}

As in \cite{fuchs2013mixing, Barndorff2011SupOU}, we restrict ourselves  to time-homogeneous and factorisable \Levy bases, i.e.\ with characteristic function $$\E\left[\exp\left(i\boldsymbol{z}^\top\Lambda(B)\right)\right] = \exp\left(\log \varphi(\boldsymbol{z}) \times \Pi(B)\right),\quad \text{for any $B \in \mathcal{B}_b(\Ss^{++}\times \R)$,}$$ 
where $\Pi = \pi \otimes\lambda^{leb}$ is the product of a probability measure $\pi$ on $\Ss^{++}$ and the Lesbesgue measure on $\R$ and $\boldsymbol{z} \mapsto \varphi(\boldsymbol{z})$ is the characteristic function of an infinitely divisible distribution (Section \ref{section:levy-noise-details}) characterised, say, by a triplet $(\gamma, \Si, \nu)$. 

Let $\widetilde{\gamma}(A):=\gamma$ and $\widetilde{\Si}(A):=\Si$ be trivial maps from $\Ss^{++}$ to, respectively, $\R^d$ and $\Ss^{+}$, and let  $\widetilde{\nu}(dx, A) := \nu(dx)$ be an extension of $\nu$ to $\R^d \times \Ss^{++}$.
As per Section 3, \cite{fuchs2013mixing} and p.\ 162 \cite{BarndorffVeraart2018AmbitFrameworkQuadruple}, any such quadruplet $(\widetilde{\gamma}, \widetilde{\Si}, \widetilde{\nu}, \pi)$ characterises completely in law a \Levy basis $\Lambda$ in the sense of Definition 33, \cite{BarndorffVeraart2018AmbitFrameworkQuadruple} where $\pi$ is then called the intensity measure (as an extension of the control measure from \cite{RajputRosinki1989SpectralIDdistributions}). \myRed{Indeed, $S \mapsto \int_S \widetilde{\gamma}(A) \pi(dA) = \gamma \pi(S)$ and $S \mapsto \int_S \widetilde{\Si}(A) \pi(dA) = \Si \pi(S)$ are respectively signed and unsigned measures on $\left(\Ss^{++}, \mathcal{B}(\Ss^{++})\right)$; while $\int_S \nu(dx, A) \pi(dA) = \nu(dx) \pi(S)$ is a \Levy measure on $\R$ for a fixed $S\in\Ss^{++}$.}

For the existence of integrals with respect to a \Levy basis, see Th.\ 3.2, \cite{fuchs2013mixing} and Th.\ 2.7, \cite{RajputRosinki1989SpectralIDdistributions}.
We recall the definition of multivariate MMA processes as follows:
\begin{definition}{\citep[adapted from Definition 3.3]{fuchs2013mixing}}\label{definition:mma} Let $\Lambda$ be an $\R^d$-valued \Levy basis on $S \times \R$ and let $f: \S \times \R \rightarrow \MndR$ be a measurable function. If the process 
$$\int_S \int_{\R} f(A, t-s)\Lambda(dA,ds),$$
exists in the sense of Theorem 3.2, \cite{fuchs2013mixing}, for all $t \in \R$, it is called an $n$-dimensional mixed moving average process (MMA for short). The function $f$ is said to be its kernel function.	
\end{definition}

Finally, we also recall the definition of mixing processes:
\begin{definition}
	\label{definition:mixing-def} A process $(\YY_t, \ t \in \R)$ is mixing if and only if, for any $t \in \R$
		 $$\proba\left(\{\YY_t\in A\}\cap \{\YY_{t+h}\in B\} \right) \longrightarrow{} \proba(\YY_t\in A)\proba(\YY_{t+h}\in B), \quad \text{as $h \rightarrow \infty$,}$$ for any $A\in \mathcal{F}_{-\infty}^t = \sigma(\{\YY_s, \ s \leq t\})$, $B\in \mathcal{F}_{t+h}^\infty = \sigma(\{\YY_s, \ s \geq t+h\})$.
\end{definition}
It is straightforward to observe that this implies ergodicity. \cite{fuchs2013mixing} adapt the mixing conditions given in \cite{maruyama1970infinitely} and \cite{rosinski1997equivalence} to the multivariate context and prove that \levy-driven MMA processes are mixing (Theorem 3.5 therein).

\subsection{Stochastic volatility component}
\label{section:stochastic-volatility-component}
This PSOU process and several extensions have been developed and studied in the last decade \citep{pigorsch2009DefinitionSemiPositiveMultOU, pigorsch2009multivariate, Barndorff2011SupOU, fuchs2013mixing, barndorffVeraart2012StochVol}. The ability to model specific marginal distributions  whilst remaining tractable gives a flexible and powerful method to augment our original model \citep[Sections 4.2 and 5]{pigorsch2009DefinitionSemiPositiveMultOU}.

Denote by $\boldsymbol{\rho}:\Ss\rightarrow \Ss$ the linear operator $\boldsymbol{X} \mapsto \VVV \boldsymbol{X} + \boldsymbol{X} \VVV^\top$ such that $e^{t\boldsymbol{\rho}}(\Ss)=\Ss$  \citep[Section 3]{pigorsch2009DefinitionSemiPositiveMultOU}.

\subsubsection{Invariant distribution and operator self-decomposability}
\label{section:stoch-vol-inv-distribution-and-osd}

The literature focuses on the existence and uniqueness of the invariant distribution given in Equation \eqref{eq:sigma-statio-integral} \citep{Masuda2004, pigorsch2009DefinitionSemiPositiveMultOU}.
 Suppose that
\begin{equation}
\label{eq:levy-subordinator-log-moment}
	\int_{\Ss^{+}} (\log\|\boldsymbol{Z}\| \vee 0)\nu_{\LLL}(d\boldsymbol{Z}) < \infty,
\end{equation}
then, there exists a unique invariant distribution $F_{\Si}$ (according to  Prop.\ 2.2, \cite{Masuda2004}, and Th.\ 4.1 \& 4.2, \cite{sato1984operator}) which we take in its matrix-valued form. The distribution $F_{\Si}$ is operator self-decomposable with respect to the linear operator $\boldsymbol{\rho}$ \citep[Prop.\ 4.3]{pigorsch2009DefinitionSemiPositiveMultOU}. Hence, $F_\Si$ is absolutely continuous if the support of $\LLL$ is non-degenerate (i.e.\ $\nu_{\LLL} (a+S)<1$ for any $a\in\Ss$ and $S\subseteq\Ss$ such that $\dim(S) \leq \dim(\Ss)-1$, see \cite{Yamazato1983}). In that case, note that this stationary distribution is almost surely concentrated on  $\Ss^{++}$ with respect to the Lebesgue measure \citep[Th.\ 4.4]{pigorsch2009DefinitionSemiPositiveMultOU}. According to Section \ref{section:stationary-solution}, \cite{pigorsch2009DefinitionSemiPositiveMultOU}, one can write for $t\geq 0$ that
$$\Si_t = e^{-t\VVV}\Si_0e^{-t\VVV^\top} + \int_0^t e^{-(t-s)\VVV}d\LLL_s e^{-(t-s)\VVV^\top},$$
or, in vectorised form, that
$$\vectorise(\Si_t) = e^{-t (\VVV \otimes \Id + \Id \otimes \VVV)}\vectorise(\Si_0) + \int_0^t e^{-(t-s) (\VVV \otimes \Id + \Id \otimes \VVV)}d\vectorise(\LLL_s).$$
Note that if $\VVV \in \Ss^{++}$ then $\VVV \otimes \Id + \Id \otimes \VVV \in \Ss^{++}$. 
%
In particular, $(\Si_t, \ t \geq 0)$ satisfies
\begin{equation}
	\label{eq:sde-stochastic-vol}
	d\Si_t = - \left(\VVV\Si_{t-} + \Si_{t-}\VVV^\top\right)dt + d\LLL_{t}, \quad t \geq 0,
\end{equation}
where $\Si_0 \in \Ss^+$. We conclude that $(\Si_t)$ is an MMA process \citep[Def.\ 3.3]{fuchs2013mixing}.
\begin{proposition}
\label{proposition:sigma-mma}
Suppose the framework given in Sections \ref{section:stochastic-volatility-presentation} \& \ref{section:stochastic-volatility-component} holds. Then, $(\Si_t,\ t\in \R)$ is an MMA process as given in Definition \ref{definition:mma}.
\end{proposition}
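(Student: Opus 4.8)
The plan is to exhibit $(\Si_t)$ explicitly in the integral form of Definition \ref{definition:mma} by identifying the driving \Levy basis and the kernel function, and then to inherit the existence of the stationary integral from the results already recorded in Section \ref{section:stoch-vol-inv-distribution-and-osd}. I would work with the vectorised process $\vectorise(\Si_t)$, since the two-sided multiplication $e^{-(t-s)\VVV}\,d\LLL_s\,e^{-(t-s)\VVV^\top}$ appearing in \eqref{eq:sigma-statio-integral} is bilinear rather than linear, and the MMA integral of Definition \ref{definition:mma} is a linear functional of the basis. The Kronecker identity $\vectorise(\boldsymbol{A}\boldsymbol{X}\boldsymbol{B}) = (\boldsymbol{B}^\top\otimes\boldsymbol{A})\vectorise(\boldsymbol{X})$ together with $e^{-u\VVV}\otimes e^{-u\VVV} = e^{-u(\VVV\otimes\Id+\Id\otimes\VVV)}$ turns the integrand into the linear form $e^{-(t-s)\mathcal{V}}\,d\vectorise(\LLL_s)$ with $\mathcal{V}:=\VVV\otimes\Id+\Id\otimes\VVV$, which is exactly the vectorised stationary representation already recalled in Section \ref{section:stoch-vol-inv-distribution-and-osd}.

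Next I would set $S=\Ss^{++}$ and let $\Lambda$ be the (matrix-valued, equivalently $\R^{d^2}$-valued via $\vectorise$, as flagged in the remark following the \Levy-basis definition) \Levy basis on $S\times\R$ associated in the sense of Section \ref{section:levy-bases-and-mma-processes} with the matrix subordinator $\LLL$ and the degenerate intensity measure $\pi=\delta_{\VVV}$, so that $\Pi=\delta_{\VVV}\otimes\lambda^{leb}$ and $\int_S\int_\R g(A,s)\,\Lambda(dA,ds)=\int_\R g(\VVV,s)\,d\vectorise(\LLL_s)$ for suitable integrands $g$. I would then define the kernel $f(A,u):=e^{-u(A\otimes\Id+\Id\otimes A)}\,\mathbb{I}_{\{u\geq 0\}}$, which is a measurable map $S\times\R\to\MdsquareR$. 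With these choices the MMA integral $\int_S\int_\R f(A,t-s)\,\Lambda(dA,ds)$ collapses to $\int_{-\infty}^t e^{-(t-s)\mathcal{V}}\,d\vectorise(\LLL_s)=\vectorise(\Si_t)$, so $(\Si_t)$ is of the required form.

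Finally, to conclude that this is a genuine MMA process I must verify that the integral exists in the sense of Theorem 3.2 of \cite{fuchs2013mixing}. Here the exponential decay of the kernel---guaranteed by $\VVV\in\Ss^{++}$, hence $\mathcal{V}\in\Ss^{++}$ and $\|e^{-u\mathcal{V}}\|\to 0$ as $u\to\infty$---combined with the log-moment condition \eqref{eq:levy-subordinator-log-moment} on $\nu_{\LLL}$ yields the required integrability against the characteristics of $\Lambda$. This is precisely the content already used in Section \ref{section:stoch-vol-inv-distribution-and-osd} to obtain the unique invariant distribution $F_{\Si}$, so existence is inherited rather than re-proved.

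I expect the main obstacle to be bookkeeping rather than anything deep. The subtlety is that the single-speed PSOU (a fixed $\VVV$) must be forced into the \Levy-basis framework of Section \ref{section:levy-bases-and-mma-processes}, which was written with an integration over the spatial variable in mind; this is resolved cleanly by the Dirac intensity $\pi=\delta_{\VVV}$. The only genuine verification is that the integrability hypotheses of the MMA existence theorem coincide with the log-moment assumption \eqref{eq:levy-subordinator-log-moment}, which I would confirm by matching the kernel's exponential decay against the \Levy characteristics of the subordinator $\LLL$.
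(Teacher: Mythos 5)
Your proposal is correct and follows essentially the same route as the paper's proof: vectorise $\Si_t$, introduce the \Levy basis on $\Ss^{++}\times\R$ built from the subordinator $\LLL$ with the Dirac intensity measure $\pi=\delta_{\VVV}$, and identify $\vectorise(\Si_t)$ as a $d^2$-dimensional MMA process with kernel $\indicator_{[0,\infty)}(t-s)\,e^{-(t-s)(\boldsymbol{A}\otimes\Id+\Id\otimes\boldsymbol{A})}$. Your explicit check of the existence hypotheses of Theorem 3.2 of \cite{fuchs2013mixing} against the log-moment condition \eqref{eq:levy-subordinator-log-moment} is slightly more detailed than the paper, which leaves that verification implicit, but it is the same argument.
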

\begin{proof}
	See Appendix \ref{proof:proposition:sigma-mma}.
\end{proof}

\subsubsection{Characteristic function}
\label{section:stoch-vol-characteristic-function}
Recall that $(\LLL_t, \ t \in \R)$ is taken to be a two-sided matrix \Levy subordinator process: a process that is $\Ss^+$-increasing (such that $\LLL_t - \LLL_s \in \Ss^{+}$ for any $t > s$) and of finite variation \citep{barndorff2007positive}. 
It is characterised by a triplet $(\boldsymbol{\gamma}_{\LLL}, \boldsymbol{0}, \nu_{\LLL})$ where $\boldsymbol{\gamma}_{\LLL} \in \Ss^{+}$ and $\nu_{\LLL}$ is a \Levy measure on the space of positive semidefinite matrices $\Ss^{+}$ such that 
 \begin{equation}
 \label{eq:levy-matrix-subordinator-moment-conditions}
 	\int_{\Ss^{+}} (\|\ZZZ\| \wedge 1)\nu_{\LLL}(d\ZZZ) < \infty, \quad \text{and} \quad  \nu_{\LLL}(\{\boldsymbol{0}\}) = 0.
 \end{equation}
 According to Part 1, \cite{barndorff2015change}, given Equation  \eqref{eq:levy-matrix-subordinator-moment-conditions}, its characteristic function at time $t \in \R$ given by
\begin{equation}
\label{eq:characteristic-function-matrix-subordinator}
\varphi_{\LLL_t}(\UUU) := \exp\left\{ t\left[i\trace(\boldsymbol{\gamma}_{\LLL} \ZZZ) + \int_{\Ss^{+} \symbol{92}\{\boldsymbol{0}\}}\left(e^{i \trace(\ZZZ\UUU)}-1\right)\nu_{\LLL}(d\ZZZ)\right]\right\}, \quad \text{for} \  \UUU \in \Ss,	
\end{equation}
with respect to the truncation function $\widetilde{\tau}(\boldsymbol{X}) \equiv 0$ on $\MdR$ \cite[Part 1]{barndorff2015change}.

 Theorem 4.9, \cite{pigorsch2009DefinitionSemiPositiveMultOU} yields that if $\VVV\in \Ss^{++}$ and Equations \eqref{eq:levy-subordinator-log-moment} \& \eqref{eq:levy-matrix-subordinator-moment-conditions} hold, then
 the PSOU process $(\Si_t,\ t \in \R)$ is strictly stationary and its distribution is infinitely divisible with characteristic function
$$\varphi_\Si(\UUU) = \exp \left\{i \trace(\boldsymbol{\gamma}_\Si) + \int_{\Ss^{+} \symbol{92} \{\boldsymbol{0}\}}\left(e^{i \trace(\ZZZ \UUU)} - 1\right)\nu_\Si(d\ZZZ)\right\}, \quad \text{for} \ \UUU \in \Ss,$$
where $\boldsymbol{\gamma}_\Si := \boldsymbol{\rho}^{-1}(\boldsymbol{\gamma}_{\LLL}) \in \Ss^+$ and
	$$\nu_{\Si}(S) := \int_{0}^\infty \int_{\Ss^{+}\symbol{92}\{\boldsymbol{0}\}} \indicator_{S}\left(e^{-s\VVV}\ZZZ e^{-s\VVV^\top}\right)\nu_{\LLL}(d\ZZZ)ds, \quad \text{for} \ S \in \mathcal{B}(\Ss^+\symbol{92}\{\boldsymbol{0}\}).$$
In that case, note that $\nu_{\Si}(\Ss\symbol{92}\Ss^{+}) = 0$. 

Following the characterisation of $(\Si_t, \ t \in \R)$, we present the second part of the noise in Equation \eqref{eq:sde-ou-with-spou-jump-vol} which is a pure-jump time-changed \Levy process.

\subsubsection{Stochastic volatility of a multivariate OU process}
\label{section:stoch-vol-term}
Suppose that $(\Si_t)$ is strictly stationary. Let $a < b \in \R\cup\{\pm \infty\}$ and consider the process 
$$\mathbb{F}^{(1)}_{ab} = \int_{a}^b e^{-(b-s)\Q}\Si_s^{1/2}d\WW_s.$$
By Proposition \ref{proposition:sigma-mma}, $(\Si_t)$ is an \levy-driven MMA process  hence locally uniformly bounded as given by Theorem 4.3, (ii), \cite{Barndorff2011SupOU}. Therefore, for any $t \geq 0$, the integral
\begin{equation}
\label{eq:integrated-variance-lebesgue}
	\int_{-\infty}^t e^{-(t-s)\Q} \Si_s e^{-(t-s)\Q^\top}ds \ \text{is a Lebesgue integral of $(\Si_t)$ $\omega$-wise.}
\end{equation} 
We obtain the following distributional property for $F^{(1)}_{ab}$:
\begin{proposition}
	\label{proposition:f1-non-degenerate}
	 The distribution of $\mathbb{F}^{(1)}_{ab}$ is non-degenerate in the sense of \cite{Yamazato1983} for any $a<b \in \R\cup\{\pm\infty\}$
\end{proposition}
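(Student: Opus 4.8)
The plan is to condition on the volatility path and exploit the Gaussian structure of the resulting stochastic integral. First I would observe that, by the assumed independence of $\WW$ and $\LLL$ (and hence of $\WW$ and the $\LLL$-driven process $(\Si_s)$), conditioning on the $\sigma$-algebra generated by $(\Si_s,\ s\in[a,b])$ leaves $\WW$ a Brownian motion while rendering the integrand $e^{-(b-s)\Q}\Si_s^{1/2}$ deterministic. Thus $\mathbb{F}^{(1)}_{ab}$ is, conditionally, a centred Gaussian vector with conditional covariance matrix
$$\Gamma := \int_a^b e^{-(b-s)\Q}\Si_s^{1/2}\left(\Si_s^{1/2}\right)^\top e^{-(b-s)\Q^\top}\,ds = \int_a^b e^{-(b-s)\Q}\Si_s e^{-(b-s)\Q^\top}\,ds,$$
which is almost surely finite by \eqref{eq:integrated-variance-lebesgue} (using the exponential decay of $e^{-(b-s)\Q}$ for $\Q\in\Ss^{++}$ and the local boundedness of the MMA process $(\Si_s)$ when $a=-\infty$).

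Second, I would show that $\Gamma\in\Ss^{++}$ almost surely. Fix $\boldsymbol{v}\in\R^d\setminus\{\boldsymbol{0}\}$ and set $\boldsymbol{w}_s := e^{-(b-s)\Q^\top}\boldsymbol{v}$. Since every matrix exponential is invertible, $\boldsymbol{w}_s\neq\boldsymbol{0}$ for all $s$, and
$$\boldsymbol{v}^\top \Gamma \boldsymbol{v} = \int_a^b \boldsymbol{w}_s^\top \Si_s\,\boldsymbol{w}_s\,ds.$$
Because the invariant distribution $F_\Si$ is concentrated on $\Ss^{++}$ (Th.\ 4.4, \cite{pigorsch2009DefinitionSemiPositiveMultOU}), one has $\proba(\Si_s\in\Ss^{++})=1$ for each fixed $s$; Fubini's theorem then yields that, almost surely, $\Si_s\in\Ss^{++}$ for $\lambda^{leb}$-almost every $s\in[a,b]$. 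On that full-measure set of times the integrand $\boldsymbol{w}_s^\top\Si_s\boldsymbol{w}_s$ is strictly positive, so $\boldsymbol{v}^\top\Gamma\boldsymbol{v}>0$ almost surely. As $\boldsymbol{v}$ was arbitrary, $\Gamma$ is almost surely positive definite.

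Finally, I would conclude via the mixture representation of the unconditional law. Conditionally on $(\Si_s)$, the law $\mathcal{N}(\boldsymbol{0},\Gamma)$ with nonsingular $\Gamma$ is absolutely continuous with respect to the Lebesgue measure on $\R^d$, hence assigns zero mass to every proper affine hyperplane. Integrating this conditional density against the law of the volatility path shows that the unconditional distribution of $\mathbb{F}^{(1)}_{ab}$ is itself absolutely continuous, so it charges no proper affine subspace and is therefore non-degenerate in the sense of \cite{Yamazato1983}.

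The main obstacle is the second step: upgrading the pointwise-in-time almost-sure positive definiteness of $\Si_s$ to the almost-sure positive definiteness of the time-integrated matrix $\Gamma$. This hinges on two ingredients that must be combined carefully, namely the concentration of $F_\Si$ on $\Ss^{++}$ together with Fubini's theorem, and the invertibility of $e^{-(b-s)\Q^\top}$ which guarantees $\boldsymbol{w}_s\neq\boldsymbol{0}$ throughout the integration interval.
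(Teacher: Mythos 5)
Your first step coincides exactly with the paper's: by independence of $\WW$ and $\LLL$, conditioning on $\sigma(\{\Si_s,\ s\in[a,b]\})$ makes $\mathbb{F}^{(1)}_{ab}$ a centred Gaussian vector with covariance $\Gamma=\int_a^b e^{-(b-s)\Q}\Si_s e^{-(b-s)\Q^\top}ds$; the paper writes this as the conditional characteristic function $\exp\{-\frac12\boldsymbol{u}^\top\Gamma\boldsymbol{u}\}$ and obtains a.s.\ finiteness of $\Gamma$ via Corollary 4.1 of \cite{basse2014stochastic}. From there the two arguments genuinely diverge. The paper never tries to prove $\Gamma\in\Ss^{++}$ almost surely; it takes expectations, bounds the \emph{unconditional} characteristic function as $\E\bigl(e^{i\boldsymbol{u}^\top\mathbb{F}^{(1)}_{ab}}\bigr)\le 1-c_2\|\boldsymbol{u}\|^2$ for $\|\boldsymbol{u}\|$ small, where $c_2$ is built from the deterministic matrix $\int_a^b e^{-(b-s)\Q}\E(\Si_1)e^{-(b-s)\Q^\top}ds$, and then concludes by Proposition 24.19 of \cite{sato1999levy}: a law whose characteristic function is strictly below one in modulus in every direction near the origin cannot sit on a proper hyperplane. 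In other words, the paper only needs non-degeneracy of the \emph{mean} integrated covariance, not of the random matrix $\Gamma$ itself.

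The one genuine gap in your argument is the claim $\proba(\Si_s\in\Ss^{++})=1$. Theorem 4.4 of \cite{pigorsch2009DefinitionSemiPositiveMultOU}, as recalled in Section \ref{section:stoch-vol-inv-distribution-and-osd}, gives concentration of the stationary law $F_\Si$ on $\Ss^{++}$ only when the support of $\nu_{\LLL}$ is non-degenerate in the sense of \cite{Yamazato1983}; this is \emph{not} among the standing assumptions \eqref{eq:levy-subordinator-log-moment} and \eqref{eq:levy-matrix-subordinator-moment-conditions} under which the proposition is stated. If $\LLL$ only charges a lower-dimensional cone in $\Ss^{+}$, each $\Si_s$ may be singular almost surely and your second step collapses, even though the conclusion can still hold: integration over time can restore full rank of $\Gamma$ because the kernels of $\Si_s$ move with $s$, and this slack is precisely what the paper's expectation-based argument exploits. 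So your proof is valid conditional on that extra hypothesis, and under it you actually deliver more than the proposition asks, namely absolute continuity of the law of $\mathbb{F}^{(1)}_{ab}$ rather than mere non-degeneracy. Two smaller remarks: the mixture step (a mixture of nonsingular Gaussians is absolutely continuous) is fine; and your positive-definiteness argument survives the usual quantifier objection over uncountably many $\boldsymbol{v}$ only because the exceptional null event --- that $\{s:\Si_s\notin\Ss^{++}\}$ has positive Lebesgue measure --- does not depend on $\boldsymbol{v}$, so you should state that ordering explicitly rather than concluding ``as $\boldsymbol{v}$ was arbitrary'' after an a.s.\ statement for fixed $\boldsymbol{v}$.
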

\begin{proof}
	See Appendix \ref{proof:proposition:f1-non-degenerate}.
\end{proof}

In the case when $a = -\infty$, we can prove the stationarity of the stochastic volatility term as follows
\begin{proposition}
\label{proposition:f1-statio}
	If $(\Si_t, \ t\in \R)$ is strictly stationary, then $(\mathbb{F}^{(1)}_{-\infty t}, \ t\in \R)$ is strictly stationary.
\end{proposition}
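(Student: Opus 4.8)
The plan is to show that $(\mathbb{F}^{(1)}_{-\infty,t},\ t\in\R)$ has time-shift-invariant finite-dimensional distributions, which is equivalent to strict stationarity. Fix $n\in\N$, times $t_1<\dots<t_n$, a shift $h\in\R$, and vectors $\boldsymbol{u}_1,\dots,\boldsymbol{u}_n\in\R^d$; the goal is to prove that the joint law of $(\mathbb{F}^{(1)}_{-\infty,t_1+h},\dots,\mathbb{F}^{(1)}_{-\infty,t_n+h})$ coincides with that of $(\mathbb{F}^{(1)}_{-\infty,t_1},\dots,\mathbb{F}^{(1)}_{-\infty,t_n})$. The main device is to exploit the independence between $\WW$ and $\LLL$ (hence between $\WW$ and $\Si$) by conditioning on the whole path $(\Si_s,\ s\in\R)$: because the driving noise is Brownian, each integral becomes a Wiener integral of a deterministic integrand once $\Si$ is frozen.

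Conditionally on $\Si$, the family $(\mathbb{F}^{(1)}_{-\infty,t_k})_{k=1}^n$ is centred and jointly Gaussian, with conditional cross-covariances
$$\Gamma_{jk}(\Si) := \int_{-\infty}^{t_j \wedge t_k} e^{-(t_j-s)\Q}\,\Si_s\, e^{-(t_k-s)\Q^\top}\,ds,$$
each well-defined by Corollary 4.1 of \cite{basse2014stochastic} together with the local boundedness of $(\Si_s)$ coming from Proposition \ref{proposition:sigma-mma} (via Theorem 4.3(ii) of \cite{Barndorff2011SupOU}). Hence the conditional joint characteristic function takes the form
$$\E\!\left[\exp\!\Big(i\sum_{k=1}^n \boldsymbol{u}_k^\top \mathbb{F}^{(1)}_{-\infty,t_k}\Big)\,\Big|\,\Si\right] = \exp\!\Big(-\tfrac{1}{2}\sum_{j,k=1}^n \boldsymbol{u}_j^\top \Gamma_{jk}(\Si)\,\boldsymbol{u}_k\Big).$$

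Next I would introduce the shifted path $\Si^{(h)}_s := \Si_{s+h}$ and perform the Lebesgue substitution $s \mapsto s+h$ in each covariance integral associated with the shifted times $t_k+h$. The deterministic kernels $e^{-(t_k-s)\Q}$ are translation-invariant under this substitution, so one obtains the identity $\Gamma_{jk}(\Si)$ at shifted times equal to $\Gamma_{jk}(\Si^{(h)})$; that is, the conditional characteristic function of the shifted family equals the conditional characteristic function of the unshifted family evaluated along $\Si^{(h)}$. Taking expectations over $\Si$ and using that strict stationarity of $(\Si_s)$ gives $\Si^{(h)} \overset{\mathcal{D}}{=} \Si$ as processes, the two unconditional characteristic functions agree. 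Since characteristic functions determine finite-dimensional laws, the time-shift invariance---and therefore the strict stationarity of $(\mathbb{F}^{(1)}_{-\infty,t})$---follows.

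The main obstacle is measure-theoretic rather than computational: justifying the conditional Gaussianity and the Fubini-type interchange on the infinite horizon $(-\infty,t]$, and ensuring the single substitution $s \mapsto s+h$ is applied consistently across all off-diagonal terms $\Gamma_{jk}$ simultaneously (which it is, the same $h$ entering every time argument). The improper integrals are controlled by the well-definedness from \cite{basse2014stochastic} and by dominated convergence using the local boundedness of $\Si$. An equivalent route that avoids conditioning is to verify directly that the integrand--integrator pair $(\Si^{1/2}_{\cdot+h},\ \WW_{\cdot+h}-\WW_h)$ has the same law as $(\Si^{1/2}_{\cdot},\ \WW_{\cdot})$---using the stationary independent increments of $\WW$, the strict stationarity of $\Si$, and their independence---and then to invoke the shift-covariance of the stochastic integral; I would fall back on this if the conditional-Gaussian bookkeeping proves awkward.
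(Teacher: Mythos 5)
Your proof is correct, but your primary argument takes a genuinely different route from the paper's. The paper works directly on the stochastic integrals: for $0\le t_1<\dots<t_k$ and $h>0$ it changes variables ($u_i = s_i-(t_i+h)+t_1$) inside each integral $\int_{-\infty}^{t_i+h}e^{-(t_i+h-s_i)\Q}\Si_{s_i}^{1/2}d\WW_{s_i}$ and then invokes strict stationarity of $(\Si_t)$ --- implicitly also using the stationary increments of $\WW$ and the independence of $\WW$ and $\Si$ --- to identify the finite-dimensional laws of the shifted and unshifted families; this is precisely the ``fallback'' route you sketch at the end (your uniform shift $s\mapsto s+h$ is in fact cleaner than the paper's per-component substitutions, and the paper's final display contains a typo, writing $\YY_{t_i}$ where $\mathbb{F}^{(1)}_{-\infty t_i}$ is meant). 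Your main route instead conditions on the path of $\Si$: by independence of $\WW$ and $\Si$, each integral becomes a Wiener integral with deterministic integrand, the conditional joint law is fully encoded in the covariances $\Gamma_{jk}(\Si)=\int_{-\infty}^{t_j\wedge t_k}e^{-(t_j-s)\Q}\Si_s e^{-(t_k-s)\Q^\top}ds$, and the single substitution $s\mapsto s+h$ identifies the shifted-time covariances with $\Gamma_{jk}(\Si^{(h)})$ where $\Si^{(h)}_s:=\Si_{s+h}$, after which $\Si^{(h)}\overset{\mathcal{D}}{=}\Si$ finishes the argument at the level of unconditional characteristic functions. What your approach buys: the Brownian motion's role is absorbed entirely into explicit Gaussian calculus, so no appeal to a shift-covariance property of stochastic integration is needed, and stationarity enters only through the law of $\Si$ in path space, which makes the proof arguably tighter than the paper's terse substitution argument. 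What it costs is the measure-theoretic bookkeeping you yourself flag: conditional Gaussianity given the independent $\sigma$-algebra $\sigma(\Si)$, measurability of $\Si\mapsto\Gamma_{jk}(\Si)$, and absolute convergence of the improper integrals, which follows from $\Q\in\Ss^{++}$ together with the $\omega$-wise Lebesgue integrability recorded in Equation \eqref{eq:integrated-variance-lebesgue}. Both routes rest on the same well-definedness input from Corollary 4.1 of \cite{basse2014stochastic}, and both correctly reduce the statement to shift-invariance of the law of $\Si$.
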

\begin{proof}
	See Appendix \ref{proposition:f1-statio:proof}.
\end{proof}

\subsection{Pure-jump component}
\label{section:pure-jump-component}
Let us next consider the pure-jump process $(\JJ_{T_t}, \ t \in \R)$. Suppose that 
\begin{equation}
\label{eq:log-moment-time-change-jump}
	\int_{\R^d}(\log\|\boldsymbol{z}\|\vee 0)\nu_\JJ(d\boldsymbol{z}) < \infty,
\end{equation}
as well as
\begin{equation}
\label{eq:square-time-change-jump}
	\int_{\R^d}(\|\boldsymbol{z}\|^2\wedge 1)\nu_\JJ(d\boldsymbol{z}) < \infty.
\end{equation}

\subsubsection{Characteristic function}
Since $\JJ$ is a pure-jump \Levy process and given Equation \eqref{eq:square-time-change-jump}, we have
$$\varphi_{\JJ_1}(\boldsymbol{u}) = \exp\left\{i\boldsymbol{u}^\top \gamma_\JJ + \int_{\R^d\symbol{92}\{\boldsymbol{0}\}} \left(e^{i\boldsymbol{u}^\top \boldsymbol{z}} - 1 - i\boldsymbol{u}^\top\boldsymbol{z}\tau(\boldsymbol{z})\right)d\nu_\JJ(d\boldsymbol{z})\right\}.$$
We recall that a stochastic process $X$ is adapted with respect to $T$ if $X$ is constant on any interval $[T_{t-},T_t]$ for any $t \in \R$. According to Lemma 10.14, \cite{Jacod1979ChangementDeTemps}, since $T_{-t} \rightarrow -\infty$ as $t \rightarrow -\infty$ and $T$ is continuous, then $\JJ$ is $T$-adapted. Similarly to Section 1.2.2, \cite{barndorffVeraart2012StochVol}, all the base properties of $\JJ$ carry over to the time-changed process. 
Therefore, the characteristic function of $(\JJ_{T_t})$ is given by
$$\varphi_{\JJ_{T_t}}(\boldsymbol{u}) = \exp\left\{T_t\cdot \log \varphi_{\JJ_1}(\boldsymbol{u}) \right\}.$$
This implies that $(\JJ_{T_t}, \ t \in \R)$ has a characteristic triplet $(T \gamma_\JJ, \boldsymbol{0},  T \otimes\nu_\JJ)$.

\subsubsection{Integrated time-changed pure-jump process}
\label{section:time-changed-pure-jump-process-characteristic-function}
Let $a < b \in \R\cup\{\pm \infty\}$ and consider the process
$$\mathbb{F}^{(2)}_{ab} := \int_a^b e^{-(b-s)\Q} d\JJ_{T_s}.$$ 
Consider the case where $a < b \in \R \cup \{\pm\infty\}$. According to Corollary 4.1, \cite{basse2014stochastic}, this integral in well-defined since $\|e^{-s\Q}\boldsymbol{x}\| \leq \|\boldsymbol{x}\|$ for any $s \geq 0$ and $\int_{\R^d} (\|\boldsymbol{z}\|^2 \wedge 1) \nu_\JJ(d\boldsymbol{z}) < \infty$ from Equation \eqref{eq:square-time-change-jump}. 

Conditional on the knowledge of $T$ and by independence between $T$ and $\JJ$, Lemma 15.1, p.\ 496, \cite{ContRama2004StochVol} yields
\begin{align*}
	\E\left[\exp\left(i \boldsymbol{u}^\top \mathbb{F}^{(2)}_{a t}\right)\Big| T \right]	&=\E\left[\exp\left( \int_a^t i \boldsymbol{u}^\top e^{-(t-s)\Q}d\JJ_{T_s}\right)\bigg|T \right]\\
	&= \exp\left[ \int_a^t  \log\varphi_{\JJ_1}\left(e^{-(t-s)\Q^\top}\boldsymbol{u}\right) dT_s\right].
\end{align*}
Additionally, $\mathbb{F}^{(2)}$ has a characteristic triplet with drift
$$\int_a^{T_t} e^{-(T_t-s)\Q}\gamma_\JJ ds + \int_a^{T_t}\int_{\R^d}e^{-(T_t-s)\Q}\boldsymbol{x}\left[\tau(e^{-(T_t-s)\Q}\boldsymbol{x}) - \tau(\boldsymbol{x})\right]\nu_\JJ(d\boldsymbol{x})ds,$$
and \Levy measure 
$$\int_{a}^{T_t}\int_{\R^d\symbol{92}\{\boldsymbol{0}\}}\indicator_{E}\left(e^{(T_t-s)\Q}\boldsymbol{x}\right)\nu_\JJ(d\boldsymbol{x})ds, \quad \text{$E \in \mathcal{B}(\R^d)$,}$$
by Lemma 3, \cite{kallsen2002time}.

\begin{proposition}
\label{proposition:f2-statio}
	$(\mathbb{F}^{(2)}_{-\infty t}, \ t \in \R)$ is strictly stationary.
\end{proposition}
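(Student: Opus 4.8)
The plan is to establish strict stationarity by showing that the finite-dimensional distributions of $(\mathbb{F}^{(2)}_{-\infty t},\ t \in \R)$ are invariant under a time shift $h \in \R$, working at the level of characteristic functions and conditioning on the time change $T$. Because $T$ is independent of $\JJ$, I would first condition on the whole path of $T$ and reuse the conditional characteristic-function formula already derived, namely that for a single $t$,
$$\E\left[\exp\left(i \boldsymbol{u}^\top \mathbb{F}^{(2)}_{-\infty t}\right)\Big| T \right] = \exp\left[\int_{-\infty}^t \log\varphi_{\JJ_1}\left(e^{-(t-s)\Q^\top}\boldsymbol{u}\right) dT_s\right].$$

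First I would extend this to the joint law at times $t_1 < \dots < t_n$ with test vectors $\boldsymbol{u}_1,\dots,\boldsymbol{u}_n \in \R^d$. Writing each $\mathbb{F}^{(2)}_{-\infty t_k}$ as a stochastic integral against $d\JJ_{T_s}$ and collecting the integrands, the linear combination $\sum_k \boldsymbol{u}_k^\top \mathbb{F}^{(2)}_{-\infty t_k}$ becomes a single integral $\int_{-\infty}^{t_n} \boldsymbol{g}(s)^\top d\JJ_{T_s}$ with the deterministic aggregated kernel $\boldsymbol{g}(s) := \sum_{k:\, t_k \geq s} e^{-(t_k-s)\Q^\top}\boldsymbol{u}_k$. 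Conditioning on $T$ and applying Lemma 15.1 of \cite{ContRama2004StochVol} as before yields the joint conditional characteristic function $\exp\left[\int_{-\infty}^{t_n} \log\varphi_{\JJ_1}(\boldsymbol{g}(s))\, dT_s\right]$.

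Next I would shift every time by $h$. Since the aggregated kernel satisfies $\boldsymbol{g}_h(s) = \boldsymbol{g}(s-h)$ under the replacement $t_k \mapsto t_k + h$, the substitution $s \mapsto s-h$ turns the shifted conditional characteristic function into $\exp\left[\int_{-\infty}^{t_n} \log\varphi_{\JJ_1}(\boldsymbol{g}(s))\, dT_{s+h}\right]$, i.e.\ the original integrand integrated against the shifted increment measure $dT_{\cdot+h}$. Taking expectations over $T$, strict stationarity then reduces to the statement that the law of $\left(\int_{-\infty}^{t_n}\log\varphi_{\JJ_1}(\boldsymbol{g}(s))\,dT_{s+h}\right)$ does not depend on $h$, which follows from the stationarity of the increments of $T$ together with its independence from $\JJ$, exactly as the stationarity of $\Si$ drives the analogous argument for $\mathbb{F}^{(1)}$ in Proposition \ref{proposition:f1-statio}. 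Equivalently, one may phrase this as: the time-changed driver $\JJ_{T_\cdot}$ inherits stationary increments, so that $\mathbb{F}^{(2)}_{-\infty t}$ is a moving-average integral of a stationary-increment process with a kernel depending only on the lag $t-s$.

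I expect the main obstacle to be the analytic bookkeeping on the semi-infinite horizon $(-\infty, t]$ rather than the shift argument itself: one must justify that the stochastic integral and its conditional characteristic function are well-defined on $(-\infty,t_n]$, using Corollary 4.1 of \cite{basse2014stochastic} together with $\|e^{-s\Q}\boldsymbol{x}\| \le \|\boldsymbol{x}\|$ and the moment conditions \eqref{eq:log-moment-time-change-jump}--\eqref{eq:square-time-change-jump}, that the aggregation of integrands and the change of variables are compatible with the truncation-function bookkeeping of the \Levy triplet of $\mathbb{F}^{(2)}$, and that the increments of the time change $T$ are indeed stationary so that the final expectation over $T$ is genuinely shift-invariant.
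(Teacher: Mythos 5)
Your proof is correct, and at its core it is the same shift-plus-change-of-variables reduction that the paper intends: the paper's own proof is a one-line deferral to Proposition \ref{proposition:f1-statio}, i.e.\ write the finite-dimensional vector at shifted times, substitute $s \mapsto s-h$ in each integral, and invoke distributional invariance of the driving randomness. Where you differ is the level at which the substitution is performed: the paper (implicitly) repeats the pathwise manipulation of the stochastic integrals used for $\mathbb{F}^{(1)}$, with the strict stationarity of $(\Si_t)$ replaced by the corresponding invariance of the driver $\JJ_{T_\cdot}$, whereas you work with conditional characteristic functions given $T$, aggregate the test vectors into the single kernel $\boldsymbol{g}(s)=\sum_{k:\,t_k\geq s}e^{-(t_k-s)\Q^\top}\boldsymbol{u}_k$, and only then shift. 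This buys two things. First, it avoids the delicate pathwise bookkeeping of integrals against $d\JJ_{T_s}$, since after conditioning the integrator's law is encoded entirely in the deterministic measure $dT_s$ and the \Levy exponent $\log\varphi_{\JJ_1}$. Second, and more importantly, it makes visible exactly what invariance is being used: the law of the increment measure $dT_{\cdot+h}$ must not depend on $h$, i.e.\ $T$ must have stationary increments. The paper's stated assumptions on $T$ (increasing, continuous, diverging) do not by themselves imply this---a deterministic $T_t=t^3$ satisfies them and destroys stationarity---so the paper's remark that the argument goes through ``since $(T_t)$ is almost surely increasing'' silently relies on the same stationary-increments hypothesis that you state explicitly (it is implicit in the cited framework of integrated-volatility time changes). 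Flagging that hypothesis is the main added value of your write-up over the paper's proof.
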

\begin{proof}
The statement can be proved similarly to Proposition \ref{proposition:f1-statio} since $(T_t)$ is almost surely increasing which is not repeated here for the sake of brevity.
\end{proof}

\begin{proposition}
\label{proposition:si-and-j2-mixing} 
  Suppose the framework given in Sections \ref{section:stochastic-volatility-presentation}, \ref{section:stochastic-volatility-component} \& \ref{section:pure-jump-component} holds. Then, $(\mathbb{F}^{(2)}_{-\infty t}, \ t \in \R)$ is a \levy-driven MMA process hence mixing.	
 \end{proposition}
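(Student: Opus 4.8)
The plan is to reduce the claim to exhibiting $(\mathbb{F}^{(2)}_{-\infty t},\ t\in\R)$ in the mixed moving average form of Definition \ref{definition:mma}, after which the mixing property is immediate from Theorem 3.5 of \cite{fuchs2013mixing}, exactly as in the companion Proposition \ref{proposition:sigma-mma} for the volatility process $(\Si_t)$. First I would recall that $\mathbb{F}^{(2)}_{-\infty t} = \int_{-\infty}^t e^{-(t-s)\Q}\,d\JJ_{T_s}$ is well-defined by Corollary 4.1 of \cite{basse2014stochastic} (using $\|e^{-s\Q}\boldsymbol{x}\|\leq\|\boldsymbol{x}\|$ for $s\geq 0$ together with the moment condition \eqref{eq:square-time-change-jump}) and strictly stationary by Proposition \ref{proposition:f2-statio}. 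The candidate representation takes the causal, spatially-trivial kernel $f(A, u) := e^{-u\Q}\indicator_{[0,\infty)}(u)$, valued in $\MdR$, together with the $\R^d$-valued random measure $\Lambda$ generated by the time-changed jumps, so that $\mathbb{F}^{(2)}_{-\infty t} = \int_S\int_\R f(A, t-s)\,\Lambda(dA, ds)$ with $S$ a singleton and $\pi$ a point mass.

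The second step is to verify that this $\Lambda$ is a genuine \Levy basis in the sense of Definition 3.1 of \cite{fuchs2013mixing}. Infinite divisibility of each $\Lambda(B)$ follows from the characteristic triplet $(T\gamma_\JJ, \boldsymbol{0}, T\otimes\nu_\JJ)$ computed in Section \ref{section:time-changed-pure-jump-process-characteristic-function}, while $\sigma$-additivity is inherited from the Poisson jump measure of $\JJ$. For time-homogeneity and factorisability I would condition on $T$: using the assumed independence of $\JJ$ and $T$ and the identity $\varphi_{\JJ_{T_t}}(\boldsymbol{u}) = \exp\{T_t\log\varphi_{\JJ_1}(\boldsymbol{u})\}$, and then pass to the intrinsic time scale via the substitution $u = T_s$ (licit since $T$ is continuous and increasing with $T_s\to\pm\infty$), under which $d\JJ_{T_s}$ becomes the increment of the bona fide \Levy process $\JJ_u$ with stationary and independent increments. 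This places the driving noise within the time-homogeneous factorisable class with control measure $\pi\otimes\lambda^{leb}$ required by the framework of Section \ref{section:levy-bases-and-mma-processes}, and the existence of the associated stochastic integral is then guaranteed by Theorem 3.2 of \cite{fuchs2013mixing} and Theorem 2.7 of \cite{RajputRosinki1989SpectralIDdistributions}.

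The main obstacle is precisely this handling of the time change $T$: unlike the matrix subordinator $\LLL$ driving $(\Si_t)$ in Proposition \ref{proposition:sigma-mma}, the integrator $d\JJ_{T_s}$ does not possess stationary independent increments in the observation time $s$, so the defining axioms of a \Levy basis cannot be read off directly and must be recovered through the conditioning-and-reparametrisation argument above, mirroring the structure of the stationarity proof of Proposition \ref{proposition:f2-statio}. Once $\Lambda$ is confirmed to be a \Levy basis and the kernel $f$ is checked to be measurable with the required integrability, Definition \ref{definition:mma} applies and Theorem 3.5 of \cite{fuchs2013mixing} yields that $(\mathbb{F}^{(2)}_{-\infty t},\ t\in\R)$ is a \levy-driven MMA process and hence mixing, completing the proof.
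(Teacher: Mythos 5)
Your overall strategy is the same as the paper's: reduce the claim to an MMA representation with the spatial component collapsed to the point mass $\delta_{\Q}$ on $\Ss^{++}$, and then invoke Theorem 3.5 of \cite{fuchs2013mixing}, in parallel with Proposition \ref{proposition:sigma-mma}. Where you diverge is in the treatment of the time change, and this is exactly where your argument breaks. The paper never reparametrises time: it keeps the basis indexed by the original (observation) time axis and lets the \emph{basis}, not the kernel, carry $T$, asserting the existence of an $\R^d$-valued \Levy basis $\Lambda_{\JJ}$ on $\Ss^{++}\times\R$ with characteristic quadruplet $(T\times\boldsymbol{\gamma}_\JJ,\boldsymbol{0},T\otimes\nu_\JJ,\delta_{\Q})$, and then, using the independence of $\JJ$ and $T$, writes $\mathbb{F}^{(2)}_{-\infty t}$ in distribution as an integral of the exponential kernel against this basis, ``similarly to Equation (4.5)'' of \cite{fuchs2013mixing}, before citing Theorem 3.5.

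The genuine gap is your claim that conditioning on $T$ and substituting $u=T_s$ simultaneously yields a time-homogeneous driving noise \emph{and} a moving-average kernel. The substitution does turn the integrator into the bona fide \Levy process $\JJ_u$, but it destroys the convolution structure required by Definition \ref{definition:mma}: since $s\in(-\infty,t]$ corresponds to $u\in(-\infty,T_t]$ with $s=T^{-1}(u)$,
\[
\mathbb{F}^{(2)}_{-\infty t}=\int_{-\infty}^{t}e^{-(t-s)\Q}\,d\JJ_{T_s}=\int_{-\infty}^{T_t}e^{-\left(t-T^{-1}(u)\right)\Q}\,d\JJ_u,
\]
and the kernel $e^{-(t-T^{-1}(u))\Q}$ is not a function of $T_t-u$ (nor of $t-u$); it depends on $t$ and $u$ separately unless $T$ is affine. (There is also the secondary issue that $T$ is only assumed increasing, not strictly increasing, so $T^{-1}$ requires a generalised-inverse convention.) Consequently, after your reparametrisation the process is \emph{not} of the form $\int_S\int_\R f(A,\tau-u)\,\Lambda(dA,du)$ for any time index $\tau$, Definition \ref{definition:mma} does not apply, and Theorem 3.5 of \cite{fuchs2013mixing} cannot be invoked. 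The dilemma is structural: in observation time the kernel is of moving-average type but the noise $d\JJ_{T_s}$ is not time-homogeneous (its intensity is $dT_s$ rather than Lebesgue); in intrinsic time the noise is homogeneous but the kernel is no longer of moving-average type. No change of variables gives you both at once, which is precisely why the paper instead embeds $T$ into the characteristic quadruplet of the basis while leaving the kernel in exponential form; it is that step, rather than your substitution, that your proof would need to reproduce.
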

 \begin{proof}
 	See Appendix \ref{proposition:si-and-j2-mixing:proof}.
 \end{proof}
 
 Proposition \ref{proposition:si-and-j2-mixing} is important since the time-changed pure-jump component does not benefit from the Gaussian structure of $\mathbb{F}^{(1)}$ and MMA processes alleviate this complication.

\subsection{Stationarity and ergodicity}
 If Equations \eqref{eq:levy-subordinator-log-moment}, \eqref{eq:levy-matrix-subordinator-moment-conditions} and \eqref{eq:log-moment-time-change-jump} hold, then $(\YY^{(v)}_t, \ t \in \R)$ can be expressed as 
\begin{equation}
	\label{eq:sde-stoch-vol-solution-statio}
	\YY^{(v)}_t = \mathbb{F}^{(1)}_{-\infty t} + \mathbb{F}^{(2)}_{-\infty t}, \quad t \in \R,
\end{equation}
as given in Equation \eqref{eq:stationary-solution-with-stoch-vol}. Both terms have  characteristic functions given in Sections \ref{section:stoch-vol-characteristic-function} and \ref{section:time-changed-pure-jump-process-characteristic-function}. Similarly to the \levy-driven case in \cite{Masuda2004}, Equation \eqref{eq:sde-stoch-vol-solution-statio} yields solutions which have operator self-decomposable distributions given the independence between $\LLL$, $\WW$, $\JJ$ and $T$. Indeed, we write for $t_2 > t_1 \in \R$
$$\YY^{(v)}_{t_2} = e^{-(t_2-t_1)\Q}\YY^{(v)}_{t_1} + \mathbb{F}^{(1)}_{t_1 t_2}+ \mathbb{F}^{(2)}_{t_1 t_2},$$
where the three terms on the right-hand side are independent and the stationary distribution is absolutely continuous if the support of $\JJ$ is non-degenerate (see Section \ref{section:stoch-vol-inv-distribution-and-osd}).

\begin{proposition}
	Suppose the framework given in Sections \ref{section:stochastic-volatility-presentation}, \ref{section:stochastic-volatility-component} \& \ref{section:pure-jump-component}. If $(\Si_t, \ t \in \R)$ is strictly stationary, then $(\YY^{(v)}_t, \ t \in \R)$ is also strictly stationary.
\end{proposition}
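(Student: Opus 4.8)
The plan is to exploit the additive decomposition established in Equation \eqref{eq:sde-stoch-vol-solution-statio}, namely $\YY^{(v)}_t = \mathbb{F}^{(1)}_{-\infty t} + \mathbb{F}^{(2)}_{-\infty t}$, and to reduce strict stationarity of $(\YY^{(v)}_t)$ to the strict stationarity of its two summands together with their mutual independence. First I would record that, under the standing moment conditions \eqref{eq:levy-subordinator-log-moment}, \eqref{eq:levy-matrix-subordinator-moment-conditions} and \eqref{eq:log-moment-time-change-jump}, this decomposition is valid and both integrals are well-defined. Proposition \ref{proposition:f1-statio} then gives that $(\mathbb{F}^{(1)}_{-\infty t}, \ t \in \R)$ is strictly stationary whenever $(\Si_t)$ is, and Proposition \ref{proposition:f2-statio} gives that $(\mathbb{F}^{(2)}_{-\infty t}, \ t \in \R)$ is strictly stationary unconditionally.

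The crucial step is to upgrade marginal stationarity of each component to strict stationarity of the joint $\R^{2d}$-valued process $\left((\mathbb{F}^{(1)}_{-\infty t}, \mathbb{F}^{(2)}_{-\infty t}), \ t \in \R\right)$. Here I would invoke the independence assumption between $\WW$, $\LLL$, $\JJ$ and $T$: since $\mathbb{F}^{(1)}$ is a measurable functional of $(\WW, \LLL)$ --- recall that $\Si$ is driven by $\LLL$ through \eqref{eq:sigma-statio-integral} --- while $\mathbb{F}^{(2)}$ is a measurable functional of $(\JJ, T)$, the two processes $(\mathbb{F}^{(1)}_{-\infty t})$ and $(\mathbb{F}^{(2)}_{-\infty t})$ are independent as processes. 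Fixing finitely many times $t_1, \dots, t_n$ and a shift $h$, the joint law of the shifted vector factorises over the two components by independence, each factor is shift-invariant by the respective proposition, and the factors recombine (again by independence) into the unshifted joint law; hence the joint process is strictly stationary.

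Finally, since $\YY^{(v)}_t = \mathbb{F}^{(1)}_{-\infty t} + \mathbb{F}^{(2)}_{-\infty t}$ is obtained by applying the fixed measurable map $(\boldsymbol{x}, \boldsymbol{y}) \mapsto \boldsymbol{x} + \boldsymbol{y}$ to the jointly strictly stationary process, and strict stationarity is preserved under such a coordinatewise deterministic transformation, the claim follows. The main obstacle is the joint-stationarity step: marginal stationarity of the two summands is by itself insufficient to conclude stationarity of their sum, so the argument genuinely relies on the independence structure of the four driving objects to factorise and then recombine the finite-dimensional distributions.
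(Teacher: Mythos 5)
Your proof is correct and follows essentially the same route as the paper: decompose $\YY^{(v)}_t = \mathbb{F}^{(1)}_{-\infty t} + \mathbb{F}^{(2)}_{-\infty t}$ via Equation \eqref{eq:sde-stoch-vol-solution-statio} and invoke Propositions \ref{proposition:f1-statio} and \ref{proposition:f2-statio} for the two summands. You are in fact more careful than the paper, whose one-line proof passes directly from marginal stationarity of the two terms to stationarity of their sum; your explicit upgrade to \emph{joint} stationarity, using that $\mathbb{F}^{(1)}$ is a functional of $(\WW,\LLL)$ while $\mathbb{F}^{(2)}$ is a functional of $(\JJ,T)$ and these pairs are independent, is precisely the step the paper leaves implicit.
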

\begin{proof}
%
From the stationarity of both right-hand side terms of Equation \eqref{eq:sde-stoch-vol-solution-statio} given by Propositions \ref{proposition:f1-statio} and \ref{proposition:f2-statio}, the result  follows directly.
\end{proof}


 We prove that $(\Si_t,\ t \in \R)$ and $(\YY^{(v)}_t, \ t \in \R)$ are ergodic in the following proposition:
 \begin{proposition}
 \label{proposition:yt-mixing-stoch-vol}
 	Suppose the framework given in Sections \ref{section:stochastic-volatility-presentation}, \ref{section:stochastic-volatility-component} \& \ref{section:pure-jump-component} holds. Then, $(\Si_t,\ t\in \R)$ and  $(\YY^{(v)}_t, \ t \in \R)$ are mixing and hence ergodic.
 \end{proposition}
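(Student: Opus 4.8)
The plan is to treat the two processes separately and to reduce both to the mixing of \levy-driven MMA processes furnished by Theorem 3.5 of \cite{fuchs2013mixing}, after which the implication ``mixing $\Rightarrow$ ergodic'' noted below Definition \ref{definition:mixing-def} closes the argument. For $(\Si_t,\ t\in\R)$ there is nothing new to do: Proposition \ref{proposition:sigma-mma} shows it is an MMA process in the sense of Definition \ref{definition:mma}, driven by the \Levy basis associated with the matrix subordinator $\LLL$, so Theorem 3.5 of \cite{fuchs2013mixing} applies verbatim and yields mixing, hence ergodicity.

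For $(\YY^{(v)}_t)$ I would start from the decomposition $\YY^{(v)}_t = \mathbb{F}^{(1)}_{-\infty t} + \mathbb{F}^{(2)}_{-\infty t}$ in Equation \eqref{eq:sde-stoch-vol-solution-statio}. Both summands are strictly stationary by Propositions \ref{proposition:f1-statio} and \ref{proposition:f2-statio}, and the independence assumption between $\WW$, $\LLL$, $\JJ$ and $T$ makes $(\mathbb{F}^{(1)}_{-\infty t})$ a functional of $(\LLL,\WW)$ independent of $(\mathbb{F}^{(2)}_{-\infty t})$, a functional of $(\JJ,T)$. The jump term is already mixing by Proposition \ref{proposition:si-and-j2-mixing}. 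Granting the mixing of the volatility term $(\mathbb{F}^{(1)}_{-\infty t})$, the conclusion follows in two steps: first, the bivariate process $(\mathbb{F}^{(1)}_{-\infty t}, \mathbb{F}^{(2)}_{-\infty t})$ is mixing, since on product events the joint law factorises by independence and each factor decorrelates by the mixing of the corresponding component, the extension to arbitrary events being a monotone-class argument; second, $\YY^{(v)}_t$ is the image of this bivariate process under the continuous map $(\va,\vb)\mapsto \va+\vb$, so its past and future $\sigma$-fields are contained in those of the bivariate process and mixing (hence ergodicity) is inherited.

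The main obstacle is therefore the mixing of the stochastic-volatility term $(\mathbb{F}^{(1)}_{-\infty t})$, which is \emph{not} itself a \levy-driven MMA process because of the random integrand $\Si_s^{1/2}$, so Theorem 3.5 of \cite{fuchs2013mixing} cannot be invoked directly. I would condition on the whole path of $(\Si_s)$: given $\Si$, the process $(\mathbb{F}^{(1)}_{-\infty t})$ is centred Gaussian, and its conditional cross-covariance between times $t$ and $t+h$ equals $\int_{-\infty}^{t} e^{-(t-s)\Q}\Si_s\, e^{-(t+h-s)\Q^\top}\,ds$, which carries the factor $e^{-h\Q^\top}$ and hence decays exponentially in $h$ uniformly in the $\Si$-path, using $\Q\in\Ss^{++}$. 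Thus, conditional on $\Si$, the past up to $t$ and the future from $t+h$ become asymptotically independent.

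Combining this conditional decoupling with the already-established mixing of $(\Si_s)$, which is the only source of shared randomness entering both halves through the modulation, should give the unconditional mixing of $(\mathbb{F}^{(1)}_{-\infty t})$. The delicate point is precisely this combination step, namely controlling simultaneously the $\Si$-dependence of the past and future blocks rather than one at a time; a clean alternative that avoids it is to verify the Maruyama-type characteristic-function criterion of \cite{maruyama1970infinitely, rosinski1997equivalence} (as used throughout \cite{fuchs2013mixing}) directly on the finite-dimensional distributions of $(\mathbb{F}^{(1)}_{-\infty t})$, exploiting again the exponential decay of the \Levy-kernel $e^{-s\Q}$ and the mixing of $\Si$.
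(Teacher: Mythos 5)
Your overall architecture coincides with the paper's: mixing of $(\Si_t)$ via Proposition \ref{proposition:sigma-mma} and Theorem 3.5 of \cite{fuchs2013mixing}, mixing of $(\mathbb{F}^{(2)}_{-\infty t})$ via Proposition \ref{proposition:si-and-j2-mixing}, and then passing to the sum $\YY^{(v)}_t = \mathbb{F}^{(1)}_{-\infty t} + \mathbb{F}^{(2)}_{-\infty t}$. Your remark that the sum step requires independence of the two summands (guaranteed here by the independence of $(\LLL,\WW)$ and $(\JJ,T)$) is in fact more careful than the paper's one-line assertion that a sum of mixing processes is mixing.

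However, there is a genuine gap: the mixing of $(\mathbb{F}^{(1)}_{-\infty t})$, which you yourself call ``the main obstacle'', is never actually established. You sketch two routes --- (a) conditioning on the $\Si$-path and combining the conditional Gaussian decorrelation with the mixing of $\Si$, and (b) verifying the Maruyama--Rosi\'nski characteristic-function criterion directly --- but you complete neither, and you explicitly flag the combination step in (a) as delicate and unresolved. Since this is the heart of the proposition, the proposal falls short of a proof. The paper closes this step by a different, unconditional argument: it treats $(\mathbb{F}^{(1)}_{-\infty t})$ as a stationary centred Gaussian process, computes its autocovariance by Fubini's theorem,
\begin{equation*}
\Acov(h) = \left[\int_{-\infty}^t e^{-(t-u)\Q}\,\E\left(\Si_u\right)e^{-(t-u)\Q^\top}du\right]e^{-h\Q^\top} = o(1), \quad \text{as } h\rightarrow\infty,
\end{equation*}
and invokes the classical criterion (Sections 3.7 \& 4.12 of \cite{dym2008gaussian}) that a stationary centred Gaussian process with continuous autocovariance vanishing at infinity is mixing; no decoupling of past and future blocks relative to $\Si$ is needed. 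Ironically, your reluctance to call $\mathbb{F}^{(1)}$ Gaussian --- unconditionally it is a Gaussian variance mixture rather than a Gaussian process --- pinpoints exactly the step the paper glides over; but having raised that objection, you were obliged either to justify applying a Gaussian mixing criterion in this setting or to carry one of your alternative routes to completion, and the proposal does neither.
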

\begin{proof}
	See Appendix \ref{proposition:yt-mixing-stoch-vol:proof}.
\end{proof} 
The mixing and ergodicity properties of $(\Si_t)$ are important for statistical inference on the stochastic volatility which is outside the scope of this article. We have proved that the resulting process is well-defined, stationary and ergodic. Therefore, conditional on the stochastic volatility, the estimator central limit theorems from Section \ref{section:asymptotic-theory-continuous-times} hold under stochastic volatility modulation.


\section{Conclusion}
In this article, we tackle the problem of modelling sparse interactions between multiple time series. \myRed{For this purpose, we have defined the \emph{Graph Ornstein-Uhlenbeck} process---a \levy-driven Ornstein-Uhlenbeck process adapted for graph structures---of which we propose two different configurations. We first consider a network-wide  parametrisation where there is only one parameter to characterise momentum across all nodes and another unique parameter for the network effect.} The first estimator is robust against the curse of dimensionality whilst only providing a scarce feedback on network interactions. Then, we consider an augmented version of this estimator with node-dependent momentum parameter and a different network effect for each of a node's neighbours. We derive the well-definedness, existence, uniqueness and efficiency of those estimators and \myRed{we prove three novel central limit theorems (CLT) as the time horizon goes to infinity for both MLEs and an Adaptive Lasso scheme. The CLTs are a necessary step for both hypothesis tests \citep{morales2000renyi} and quantifying uncertainty in the inference of graphical and/or high-dimensional time series. Finally, we extend the GrOU process to include both a stochastic volatility and jump terms which serves as an introduction towards flexible covariance structures for graphs.} We also show that the afore-mentioned properties and asymptotic behaviours hold under standard regularity and independence assumptions. 

\myRed{This work is the first step towards understanding the behaviour of continuous-time stochastic processes on graph structures. A limitation of the current formulation is the necessity to have a continuum of data available and recent theoretical studies and applications leverage high-frequency data sources to circumvent this issue \citep{kim2016asymptotic, brownlees2020estimation}. The asymptotic properties of the GrOU process in this context is left for future research. Next, the extensions to a time-dependent and stochastic modulation of the noise open up questions on the inference of sparse \levy-driven high-dimensional volatility or covariance structures \citep{tao2013optimal, belomestny2019sparse, cai2016estimating}. }

\appendix

\section*{Acknowledgments}
The authors gratefully acknowledge the financial support from the EPSRC Centre for Doctoral Training in Financial Computing and Analytics at University College London and Imperial College London (under the grant EP/L015129/1).

\section{Proofs}
\subsection{Notations}
To match the framework from \cite{kuchler1997exponentialbook}, we introduce the standard notation for Jacobian and Hessian matrix as follows:
\begin{notation}
\label{notation:jacobian-hessian-and-C_t}
Consider any mapping $(t, \boldTheta) \mapsto C_t(\boldTheta)$ where $t \in (0, \infty)$ and $\boldTheta \in \R^k$ for some $k\in\N$. Define $\boldsymbol{\Theta} := \{\boldTheta \in \R^k: \ |C_t(\boldTheta)| < \infty,\ \forall t \geq 0 \}$ such that for a fixed $t$, $\boldTheta \mapsto C_t(\boldTheta)$ is a twice-differentiable mapping on $\interior \ \boldsymbol{\Theta}$. The Jacobian   and the Hessian matrices with respect to $\boldTheta$ are denoted $\nabla{C_t}(\boldTheta)$ and $\laplace{C_t}(\boldTheta)$ and defined, respectively, as
$$\nabla{C_t}(\boldsymbol{\theta}) := 
\left(\frac{\partial C_t(\boldsymbol{\theta})}{\partial \theta_j},\ j \in \{1,\dots,k\}\right)^\top
 \ \text{and} \ \laplace{C_t}(\boldsymbol{\theta}):=
\left(\frac{\partial^2 C(\boldsymbol{\theta})}{\partial \theta_i \partial \theta_j},\ i,j \in \{1,\dots,k\}\right),
$$
for any $\boldsymbol{\theta} \in \interior \ \boldsymbol{\Theta}$.
\end{notation}

\subsection{Proof of Theorem \ref{theorem:mle_cts_vec}}
\label{proof:theorem:mle_cts_vec}

Consider the definition of a steep mapping from Section 2.2, \cite{kuchler1997exponentialbook}: with the notations from Notation \ref{notation:jacobian-hessian-and-C_t},
for a fixed $t \in (0,\infty)$, if $\boldsymbol{\theta} \mapsto C_t(\boldsymbol{\theta})$ is a differentiable convex map such that for any $\boldsymbol{\theta}_0 \in \interior \ \boldsymbol{\Theta}$ and  $\boldsymbol{\theta}_1 \in \boldsymbol{\Theta} \symbol{92} \interior \ \boldsymbol{\Theta}$ we have \begin{equation} 
        \frac{\partial}{\partial \alpha} C_t\left((1-\alpha)\boldsymbol{\theta_0} +\alpha \boldsymbol{\theta_1}\right) \longrightarrow \infty, \quad \text{as } \alpha \rightarrow  1, \label{eq:steep_limit}
    \end{equation}
    then $C_t$ is said to be steep. Note that a sufficient condition for this property to be true is to have $\boldsymbol{\Theta} = \R^k$ \citep[Section 2.2]{kuchler1997exponentialbook}. In this context, we define 
    $$C_t(\boldsymbol{\theta}) := \frac{1}{2} \int_0^t \langle \Q(\boldTheta) \YY_s;\Q(\boldTheta)\YY_s \rangle_\Si ds,$$
   and, from Notation \ref{notation:H-and-C}, recall that we have
    $$
\HH_t := - \begin{pmatrix}
\int_{0}^t\langle \An\YY_{s}, d\YY^c_s \rangle_\Si \\
\int_{0}^t\langle \YY_{s}, d\YY^c_s \rangle_\Si
\end{pmatrix} \ \text{such that} \ [\HH]_t = \begin{pmatrix}
		\int_0^t\langle \An \YY_{s},\An\YY_{s} \rangle_\Si ds & \int_0^t\langle \An\YY_{s},\YY_{s} \rangle_\Si ds\\
		 \int_0^t\langle \An\YY_{s},\YY_{s} \rangle_\Si ds & \int_0^t\langle \YY_{s},\YY_{s} \rangle_\Si ds
	\end{pmatrix}
,$$
and that $2C_t(\boldsymbol{\theta})= \boldsymbol{\theta}^\top \cdot 
	[\HH]_t \cdot \boldsymbol{\theta}$ by Equation \eqref{eq:laplace-theta-likelihood}.

\begin{proof}[Proof of Theorem \ref{theorem:mle_cts_vec}]

According to Proposition \ref{proposition:radon-nikodym}, the likelihood is defined for any $\boldTheta \in \R^2$ and \myRed{note that} $C_t(\boldTheta)$ is the cumulant generating function of $\HH_t$.
Theorem 8.2.1, \cite{kuchler1997exponentialbook} requires that (a) the said likelihood representation is minimal which is true since $\HH$ has almost surely affinely independent components \citep[Section 4]{kuchler1997exponentialbook}; (b) $\boldTheta \mapsto \boldTheta$ is injective which is trivially true; (c) the cumulant function $\boldTheta \mapsto C_t(\boldTheta)$ to be defined on a set $\boldsymbol{\Theta}$ independent of $t$ and to be steep - again, this is also true since by Lemma \ref{lemma:HH_t-limit} $\boldTheta\mapsto C_t(\boldTheta)$ is defined on $\R^2$ with $|[\HH]|_t < \infty$ a.s.\ componentwise for a fixed $t \geq 0$ large enough \citep[criterion in Section 2.2]{kuchler1997exponentialbook}.

In closed-form, we obtain from Proposition \ref{proposition:radon-nikodym} and Notation \ref{notation:jacobian-hessian-and-C_t} that:
\begin{align*}
    \nabla\ln\mathcal{L}_t(\boldsymbol{\theta};\YY) &=
\begin{pmatrix}
-\int_{0}^t\langle \An\YY_{s}, d\YY^c_s \rangle_{\Si} - \theta_1\int_0^t\langle \An\YY_{s},\An\YY_{s}\rangle_{\Si} ds - \theta_2 \int_0^t\langle \An\YY_{s}, \YY_{s}\rangle_{\Si} ds\\
-\int_{0}^t\langle \YY_{s}, d\YY^c_s \rangle_{\Si} - \theta_1\int_0^t\langle \An\YY_{s},\YY_{s}\rangle_{\Si} ds - \theta_2 \int_0^t\langle \YY_{s}, \YY_{s}\rangle_{\Si} ds 
\end{pmatrix},\\
&= \HH_t - [\HH]_t \cdot \boldsymbol{\theta}.
\end{align*}
Therefore, under necessary conditions, the MLE $\widehat{\boldsymbol{\theta}}_t$ up to time $t \geq 0$ large enough is given by $[\HH]_t^{-1} \cdot \HH_t$, i.e.
$$\widehat{\boldsymbol{\theta}}_t = \det([\HH]_t)^{-1}\begin{pmatrix}
\int_{0}^t\langle \YY_{u-}, d\YY^c_u\rangle_{\Si}  \int_0^t\langle \An\YY_{v}, \YY_{v}\rangle_{\Si} dv-\int_{0}^t\langle \An\YY_{u-}, d\YY^c_u\rangle_{\Si}  \int_0^t\langle \YY_{v}, \YY_{v}\rangle_{\Si} dv \\
\int_{0}^t\langle \An \YY_{u-}, d\YY^c_u\rangle_{\Si}  \int_0^t\langle \An\YY_{v}, \YY_{v}\rangle_{\Si} dv-\int_{0}^t\langle \YY_{u-}, d\YY^c_u\rangle_{\Si}  \int_0^t\langle \An\YY_{v}, \An \YY_{v}\rangle_{\Si} dv 
\end{pmatrix}.$$

Regarding the existence and uniqueness of the estimator, by application of Theorem 8.2.1, \cite{kuchler1997exponentialbook}, the maximum likelihood estimator exists and is unique if and only if $|\HH_t| < \infty \ P_{t,\YY}$-a.s.\ componentwise and if $\determinant([\HH]_t) > 0$. The first condition is satisfied again by Lemma \ref{lemma:HH_t-limit} whilst the second remains to be checked.

More precisely, the almost-sure positiveness of the determinant of the matrix $[\HH]_t$ is proved using Fubini's theorem along with the Cauchy-Schwarz's inequality (in both its inner product and integral forms) as follows:
\begin{align*}
    \int_0^t\langle \An\YY_{u},\An\YY_{u}\rangle_{\Si} du \times \int_0^t\langle \YY_{v}, \YY_{v}\rangle_{\Si} dv \geq \left(\int_0^t |\langle\An\YY_{u},\YY_{u}\rangle_{\Si} | du\right)^2 \quad \text{since $1 \geq \indicator\{u=v\}$}.
\end{align*}
 Here, (ineq)equalities are almost sure with respect to the martingale measure $P_{t, \YY}$. Finally, observe that the Cauchy-Schwarz inequalities are actually sharp since the diagonal of $\An$ is zero. More precisely, the $i$-th component of $\An \YY_u$ is not linearly dependent with the $i$-th component of $\YY_u$ since $\diag\left(\An\right) = \boldsymbol{0}$. Thus, we conclude since the determinant of $[\HH]_t$, denoted $\det([\HH]_t)$, has the following value:
\begin{align*}
    \det([\HH]_t) &= \int_0^t\langle \An\YY_{s},\An\YY_{s}\rangle_{\Si} ds \times \int_0^t\langle \YY_{s}, \YY_{s}\rangle_{\Si} ds - \left(\int_0^t\langle \An\YY_{s}, \YY_{s}\rangle_{\Si} ds\right)^2 > 0 \quad P_{t,\YY}-a.s.,
\end{align*}
for any $t \geq 0$. We have then proved that this estimator exists and is unique on $\R^2$. By convexity of the likelihood, it is also unique on any compact set on which it exists which concludes the proof.

\end{proof}

\subsection{Proof of Lemma \ref{lemma:square-kronecker-product}}
\label{proof:lemma:square-kronecker-product}
\begin{proof} To prove that $\liminf_{t \rightarrow \infty} t^{-1}\lambda_{min}({\Kt}_t) > 0$, one mainly uses Proposition \ref{proposition:ergodicity-vector} with the vectorised matrix $\YY_s \YY_s^{\top}$, denoted $\vectorise(\YY_s \YY_s^{\top})$ which yields 
$$\frac{1}{t} \int_{0}^{t} \vectorise(\YY_s \YY_s^{\top})ds \xrightarrow{\proba_{\boldsymbol{0}}-a.s.} \E_{t,\YY}\left(\vectorise(\YY_\infty \YY_\infty^{\top})\right) < \infty, \quad \text{as $t\rightarrow \infty$,}$$
which can clearly be written in its matrix form. Similarly to \cite{Masuda2004} and \cite{Gaiffas2019SparseOUProcess}, the stationary solution presented in Equation  \eqref{eq:stationary-solution-vec} gives away that the limiting (ergodic) quantity is positive definite a.s. since we have $$\E_{t,\YY}\left(\YY_\infty \YY_\infty^{\top}\right) = \int_0^\infty e^{-s\Q}\E\left(\LL_1 \LL_1^\top\right) e^{-s\Q^{\top}}ds,$$
which is positive definite. The result follows immediately and by extension, for $t$ large enough, we can deduce that ${\Kt}_t$ is almost-surely positive definite. 
\end{proof}

\subsection{Proof of Proposition \ref{proposition:node-level-likelihood}}
\label{proof:proposition:node-level-likelihood}

\begin{proof}[Proof of Proposition \ref{proposition:node-level-likelihood}]
Recall that Proposition \ref{proposition:node-level-psi-identiability} holds and we write $\widetilde{\Q} :=\vectorise^{-1}(\boldPsi)$. 
The logarithm of the likelihood defined Eq.\ \eqref{eq:radon-nikodym-derivative} evaluated at $\widetilde{\Q}$ is written as
$$-\int_0^t \langle \widetilde{\Q} \YY_{s}, d\YY^c_s\rangle_{\Si} - \frac{1}{2}\int_0^t\langle \widetilde{\Q} \YY_s, \widetilde{\Q}\YY_s\rangle_{\Si} ds.$$
The first term (up to its sign) is given by
$\int_0^t \langle\widetilde{\Q}\YY_s,d\YY^c_s\rangle_{\Si}$ which can be reformulated  as follows
\begin{align*}
	\int_0^t\langle\widetilde{\Q}\YY_s,d\YY^c_s\rangle_{\Si} &= \int_0^t\sum_{n=1}^d\sum_{m=1}^d\sum_{l=1}^d \widetilde{Q}_{ml}Y^{(l)}_s(\Si^{-1})_{mn}dY^{(n),c}_s\\
	&= \sum_{l=1}^d\sum_{m=1}^d\sum_{n=1}^d \widetilde{Q}_{ml}(\Si^{-1})_{mn}\int_0^t Y^{(l)}_s dY^{(n),c}_s\\
	&= \sum_{l=1}^d\sum_{m=1}^d \widetilde{Q}_{ml}\sum_{n=1}^d\left((\Si^{-1})_{mn}\int_0^t Y^{(l)}_s dY^{(n),c}_s\right)\\
	&= \boldsymbol{\psi}^\top \cdot \Id \otimes \Si^{-1}\cdot \int_0^t \YY_s \otimes d\YY_s^c,
\end{align*}
For the second term, one obtains that 
\begin{align*}
	\int_0^t\langle \widetilde{\Q} \YY_s, \widetilde{\Q}\YY_s\rangle_{\Si} &= \int_0^t\sum_{n=1}^d\sum_{m=1}^d\sum_{l=1}^d\sum_{j=1}^d \widetilde{Q}_{ml}(\Si^{-1})_{mn} Y^{(l)}_sY^{(j)}_s \widetilde{Q}_{nj}ds \\
	&= \sum_{n=1}^d\sum_{m=1}^d\sum_{l=1}^d\sum_{j=1}^d \widetilde{Q}_{ml} \int_0^tY^{(l)}_sY^{(j)}_sds (\Si^{-1})_{mn} \widetilde{Q}_{nj}\\
	&= \sum_{n=1}^d\sum_{m=1}^d\sum_{l=1}^d\sum_{j=1}^d \boldsymbol{\psi}_{d(l-1)+m} \int_0^tY^{(l)}_sY^{(j)}_sds (\Si^{-1})_{mn}  \boldsymbol{\psi}_{d(j-1)+n}.
	\end{align*}
	Since $\boldPsi = \vectorise(\widetilde{\Q})$, we have that
	\begin{equation}
		\label{eq:inner-product-as-second-order-structure}
		\int_0^t\langle\widetilde{\Q}\YY_s,\widetilde{\Q}\YY_s\rangle_{\Si}ds=\boldsymbol{\psi}^\top \cdot \int_0^t\YY_s\YY_s^\top ds \otimes \Si^{-1} \cdot \boldsymbol{\psi}.
	\end{equation}
	Finally, using that $(AC)\otimes(BD) = (A\otimes B)\cdot (C \otimes D)$ with $A,B,C,D$ four matrices with dimensions such that the products $AC$ and $BD$ are well-defined, one obtains
	$$\int_0^t\langle\widetilde{\Q}\YY_s,\widetilde{\Q}\YY_s\rangle_{\Si}ds=\boldsymbol{\psi}^\top \cdot\left(\Id\otimes\Si^{-1}\right)\cdot\left({\Kt}_t\otimes\Id\right) \cdot \boldsymbol{\psi} = \boldsymbol{\psi}^\top \cdot [\Iit]_t \cdot \boldsymbol{\psi},$$
	which concludes the proof. Given the stationary and square integrability of $\YY$, $\Iit$ and $[\Iit]$ are a.s.\ finite for $t < \infty$ and we have that $\boldsymbol{\Psi} := \left\{\boldPsi \in \R^{d^2}: |\boldsymbol{\psi}^\top \cdot  [\Iit]_t \cdot \boldsymbol{\psi}| < \infty, \ \forall t \geq 0 \right\} = \R^{d^2}$.
\end{proof}

\subsection{Proof of Proposition \ref{proposition:vec-cts-clt}}
\label{proof:proposition:vec-cts-clt}
\begin{proof}[Proof of Proposition \ref{proposition:vec-cts-clt}]
Similarly to Section \ref{proof:theorem:mle_cts_vec}, we define $$C_t(\boldsymbol{\theta}) := \frac{1}{2} \int_0^t \langle \Q(\boldTheta) \YY_s;\Q(\boldTheta)\YY_s \rangle_\Si ds.$$
We verify point-by-point the conditions of Theorem 8.3.1, \cite{kuchler1997exponentialbook} to show the consistency of the MLE under $P_{\YY}$. Let $\boldTheta \in \widehat{\boldsymbol{\Theta}}$. 

First, Lemma \ref{lemma:HH_t-limit}, we have
$$ t^{-1}[\HH]_t  \xrightarrow{\proba_{0}-a.s.}\boldsymbol{G}_\infty,$$
which is positive definite by Cauchy-Schwarz's inequality (see Section \ref{proof:theorem:mle_cts_vec}) and independent of $\boldTheta$ ensuring that
\begin{equation}
	\label{eq:condition-8.3.2-kuchler}
	 \|t^{-1} [\HH]_t - \boldsymbol{G}_\infty \| \xrightarrow{\ p \ } 0, \quad \text{as $t\rightarrow \infty$}.
\end{equation}
 
Then, we prove that $[\HH]_t^{-1/2}\cdot (\HH_t - \nabla C_t(\boldsymbol{\theta}))$ is stochastically bounded for $t \geq 0$ large enough. It requires to prove that for any $\epsilon > 0$, there exists $K > 0$ such that $$\sup_{\boldsymbol{\theta} \in \boldsymbol{\widetilde{\Theta}}}\proba_{t,\YY}\{|t^{-1/2}\boldsymbol{G}_\infty^{-1/2} \cdot (\HH_t - \nabla C_t(\boldsymbol{\theta}))| > K \} < \epsilon.$$
\myRed{Next, by ergodicity we have that:}
$$[\HH]_t^{-1/2} \cdot \HH_t = t^{1/2} ([\HH]_t/t)^{-1/2} \HH_t/t = O(t^{1/2}), \quad \proba_{0}-a.s.$$ as $t \rightarrow \infty$ independently of $\boldsymbol{\theta}$. Recall that $\nabla{C}_t(\boldsymbol{\theta}) = [\HH]_t\cdot \boldTheta$. We have that $[\HH]_t^{-1/2} \cdot \nabla{C}_t(\boldsymbol{\theta})$ has almost-surely finite components for any $t \geq 0$. Indeed, those are c{\`a}dl{\`a}g quantities which converge almost surely to a quantity that is $O(t^{1/2})$ as $t \rightarrow \infty$. Therefore, those components cannot explode with a probability larger than $0$ for any $t \geq 0$ large enough - say above some fixed $t_1 \geq 0$. This is valid on the compact set $\boldsymbol{\widetilde{\Theta}}$ hence we have proved the uniform stochastic boundedness of $\left\{[\HH]_t^{1/2} \cdot (\HH_t-\nabla{C}_t(\boldsymbol{\theta})): t \geq t_1 \right\}$ on $\widehat{\boldsymbol{\Theta}}$. Therefore, thanks to the latter and \eqref{eq:condition-8.3.2-kuchler},  Theorem 8.3.1, \cite{kuchler1997exponentialbook}
 yields that $\widehat{\boldsymbol{\theta}_t} \xrightarrow{\ p \ } \boldsymbol{\theta}$ as $t \rightarrow \infty$.
\end{proof}

\subsection{Proof of Lemma \ref{lemma:network-asymptotic-normal}}
\label{proof:lemma:network-asymptotic-normal}
We prove that the likelihood converges locally to a Gaussian shift experiment \citep{hajek1970efficiencyLan}. 
Since $d\YY^c_t = d\WW_t - \Q(\boldTheta)\YY_{t-}dt$, we decompose $\HH$ into a stochastic term $\HH^{(s)}$ and a drift term $\HH^{(d)}$ defined for any $t \geq 0$ by
$$\HH^{(s)}_t :=  - \begin{pmatrix}
\int_{0}^t\langle \An\YY_{s}, d\WW_s \rangle_\Si \\
\int_{0}^t\langle \YY_{s}, d\WW_s \rangle_\Si
\end{pmatrix} \ \text{and} \ \HH^{(d)}_t :=  \begin{pmatrix}
\int_{0}^t\langle \An\YY_{s}, \Q(\boldTheta)\YY \rangle_\Si ds \\
\int_{0}^t\langle \YY_{s}, \Q(\boldTheta)\YY_s \rangle_\Si ds
\end{pmatrix},$$
such that $\HH_t \overset{d}{=} \HH^{(s)}_t + \HH^{(d)}_t$. We note that 
\begin{equation}
\label{eq:drift-term-reformulation}
	 \HH^{(d)}_t \overset{d}{=} [\HH]_t \boldTheta,
\end{equation}
since  $\Q(\boldTheta) = \theta_2 \Id + \theta_1\An$.  According to Proposition \ref{proposition:radon-nikodym}, we define the log-likelihood $\ell_t(\boldTheta; \YY) := \ln \mathcal{L}_t(\boldTheta; \YY)$ for $\boldTheta \in \widehat{\boldsymbol{\Theta}}$. By the almost sure finiteness of $\HH$ and $[\HH]$, for any $\boldsymbol{h} \in \R^2$, we obtain as increment of the log-likelihood the following:
\begin{align*}
	\ell_t(\boldTheta + t^{-1/2}\boldsymbol{h}; \YY)  -\ell_t(\boldTheta; \YY) &= t^{-1/2}\boldsymbol{h}^\top \HH_t - t^{-1/2}\boldsymbol{h}^\top [\HH]_t \boldTheta - \frac{t^{-1}}{2} \boldsymbol{h}^\top [\HH]_t \boldsymbol{h} \\
	&= t^{-1/2}\boldsymbol{h}^\top \HH^{(s)}_t + t^{-1/2}\boldsymbol{h}^\top \HH^{(d)}_t - t^{-1/2}\boldsymbol{h}^\top [\HH]_t \boldTheta  - \frac{t^{-1}}{2} \boldsymbol{h}^\top [\HH]_t \boldsymbol{h}\\
	&= t^{-1/2}\boldsymbol{h}^\top \HH^{(s)}_t  - \frac{t^{-1}}{2} \boldsymbol{h}^\top [\HH]_t \boldsymbol{h}, \quad \text{by Equation \eqref{eq:drift-term-reformulation}.}
\end{align*} 
Then, we observe that: (i) $\HH^{(s)}$ is a square integrable martingale (under $\proba_{\YY}^{\boldTheta}$) by the square integrability of $\YY$; (ii) $\HH^{(s)}$ is continuous hence $\HH^{(s)}_t-\HH^{(s)}_{t-}=0$ because $\YY$ is c{\`a}dl{\`a}g; (iii) $t^{-1}[\HH]_t \rightarrow \boldsymbol{G}_{\infty}$, $\proba_0-$almost surely (by Theorem \ref{proposition:ergodicity-vector}) hence in probability where $\boldsymbol{G}_{\infty}$ is  a (deterministic) positive definite matrix. This also yields the convergence of the covariance matrix of $\HH^{(s)}_t$ as $t \rightarrow \infty$. Then, by Theorem A.7.7, \cite{kuchler1997exponentialbook}, we obtain the pairwise convergence
$$\left(t^{-1/2} \HH^{(s)}_t,\ \frac{t^{-1}}{2} [\HH]_t \right) \xrightarrow{\ \mathcal{D}\ } \left( \boldsymbol{G}_\infty^{1/2}\boldsymbol{\eta}, \ \frac{1}{2}\boldsymbol{G}_\infty \right), \quad \text{as $t \rightarrow \infty$,}$$
where $\boldsymbol{\eta}$ is a $2$-dimensional standard normal random variable and therefore
\begin{equation}
\label{eq:likelihood-increment}
	l(\boldTheta + t^{-1/2}\boldsymbol{h}; \YY)  -l(\boldTheta; \YY) \xrightarrow{\mathcal{D}} \boldsymbol{h}^\top\boldsymbol{G}_\infty^{1/2}\boldsymbol{\eta} - \frac{1}{2}\boldsymbol{h}^\top \boldsymbol{G}_\infty\boldsymbol{h}, \quad \text{as $t \rightarrow \infty$.}
\end{equation}
We conclude that the family of statistical models $\left(\proba_\YY^{\boldTheta}, \ \boldTheta \in \widehat{\boldsymbol{\Theta}}\right)$ is locally asymptotically normal since $\boldsymbol{G}_\infty$ is deterministic. 

\subsection{Proof of Theorem \ref{th:clt-theta}}
\label{proof:th:clt-theta}
\begin{proof}[Proof of Theorem \ref{th:clt-theta}]	
	Using the notations of Section \ref{proof:proposition:vec-cts-clt} (proof of Proposition \ref{proposition:vec-cts-clt}), we have that $t\mapsto \nabla C_t(\boldTheta) = [\HH]_t\cdot \boldTheta$ is continuous with bounded variation on compact intervals as a time integral of an $L^1$ integrand. 
	 Condition 8.3.2, \cite{kuchler1997exponentialbook} holds since: (a) $\HH_t$ is continuously-valued in time; (b) $t^{-1}\laplace C_t(\boldTheta) = t^{-1}[\HH]_t \rightarrow \boldsymbol{G}_{\infty}$ under $\proba_{\YY}$ which is positive definite; (c) by Fubini-Tonelli's theorem, we have the same limit for the expected information matrix
$$t^{-1} \cdot \E\left([\HH]_t\right) \xrightarrow{\proba_{0}-a.s.} \boldsymbol{G}_\infty,$$
since $\YY_s$ has finite second moments for any $s \geq 0$. 
Hence, by Theorem 8.3.4, \cite{kuchler1997exponentialbook}, we obtain that 
$$[\HH]_t^{1/2}\left(\widehat{\boldsymbol{\theta}}_t - \boldsymbol{\theta}\right) \xrightarrow{\ \mathcal{D} \ } \mathcal{N}(\boldsymbol{0}, I_{2 \times 2}), \quad \text{as $t \rightarrow \infty$.}$$

Regarding the asymptotic efficiency, recall that $\widehat{\boldTheta} := [\HH]_t^{-1}\HH_t$ and from Lemma \ref{lemma:network-asymptotic-normal} and its proof (Equations \eqref{eq:drift-term-reformulation} \& \eqref{eq:likelihood-increment}), we obtain that that  $t^{1/2}(\widehat{\boldTheta}-\boldTheta)$ is asymptotically Gaussian with an asymptotic variance of $\boldsymbol{G}_\infty$ which is the corresponding Fisher information matrix. According to the H{\'a}jek-Le Cam's convolution theorem for locally asymptotically normal experiments \citep{LeCam1990}, we obtain the H{\'a}jek-Le Cam asymptotic efficiency of the estimator.
\end{proof}

\subsection{Proof of Lemma \ref{lemma:martingale-decomposition}}
\label{proof:lemma:martingale-decomposition}
\begin{proof}[Proof of Lemma \ref{lemma:martingale-decomposition}]
 Similarly to Example 8.3.6, \cite{kuchler1997exponentialbook}, we notice that Equation \eqref{eq:sde} in particular for the continuous part of $\YY$ yields $d\YY^{c}_t = -\rmQ\YY_tdt + d\WW_t$. Since $\widehat{\boldPsi}$ can be written $[\Iit]_t^{-1}\Iit_t$, we have $\widehat{\boldsymbol{\psi}}_t = [\ \Iit\ ]_t^{-1} ([\Iit]_t \boldsymbol{\psi} + \Mt_t) = \boldsymbol{\psi} + [\Iit]_t^{-1}\Mt_t$ where $\Mt_t = \int_0^t\YY_t \otimes d\WW_t$. In addition, $\Mt_t$ is a martingale under $\proba_{0}$ since $\YY_t \in L^2(\R^d)$. 
\end{proof}

\subsection{Proof of Theorem \ref{th:cv-cts-time}}
\label{proof:th:cv-cts-time}
\begin{proof}[Proof of Theorem \ref{th:cv-cts-time}]
Denote $\boldsymbol{G}^\boldPsi_\infty := \E(\YY_\infty \YY^{\top}_\infty) \otimes \Si^{-1}$. The proof is similar to that of Prop.\ \ref{proposition:vec-cts-clt} and we define $\widetilde{\Q} := \vectorise^{-1}(\boldPsi)$ as well as
$$C_t(\boldPsi) := \frac{1}{2}\int_0^t \langle \widetilde{\Q} \YY_s, \widetilde{\Q} \YY_s\rangle_\Si ds = \frac{1}{2}\boldPsi^\top \cdot [\Iit]_t \cdot \boldPsi.$$ 
Observe that $\nabla^2 C_t(\boldPsi) = [\Iit]_t = {\Kt}_t \otimes \Si^{-1}$ and we have by ergodicity
$$ t^{-1}[\Iit]_t\longrightarrow \E(\YY_\infty \YY^{\top}_\infty) \otimes \Si^{-1}= \boldsymbol{G}^\boldPsi_\infty \quad \proba_0-a.s., \quad \text{as $t\rightarrow \infty$,}$$
since $t^{-1}\int_0^t Y^{(i)}_s Y^{(j)}_s ds \rightarrow  \E\left( Y^{(i)}_\infty Y^{(j)}_\infty \right) \ \proba_{0}-a.s.$ as $t \rightarrow \infty$. Note that $t^{-1} \E\left([\Iit]_t\right)$ converges to the same limit and that $$[\Iit]_t^{-1/2}\Iit_t = t^{1/2}\cdot ([\Iit]_t/t)^{-1/2} \cdot \Iit/t = O(t^{1/2})\quad \proba_0-a.s., \ \text{as $t \rightarrow \infty$,}$$
since $\boldsymbol{G}^\boldPsi_\infty$ has almost-surely finite components and positive definite.

Theorem 8.3.4, \cite{kuchler1997exponentialbook} gives a central limit theorem with $[\Iit]_t^{1/2}$ as the scaling matrix.
Since the stochastic boundedness and finite variation over bounded intervals of $\left\{[\Iit]_t^{-1}\left(\Iit_t - \nabla C_t(\boldPsi)\right) : t \geq 0 \right\}$ on $\widehat{\boldsymbol{\Psi}}$ can be proved similarly to the proof of Proposition \ref{proposition:vec-cts-clt}, we can now apply Theorem 8.3.4, \cite{kuchler1997exponentialbook} and conclude that, as $t \rightarrow \infty$, 
$$[\Iit]_t^{-1/2}\left(\widehat{\boldsymbol{\psi}}_t - \boldsymbol{\psi}\right) \xrightarrow{\ \mathcal{D} \ } \mathcal{N}\left(\boldsymbol{0}_{d^2}, \Isquare \right).$$ 

and, by Slutsky's lemma,
$$t^{1/2}\left(\widehat{\boldsymbol{\psi}}_t - \boldsymbol{\psi}\right) \xrightarrow{\ \mathcal{D} \ } \mathcal{N}\left(\boldsymbol{0}_{d^2}, \E(\YY_\infty  \YY^{\top}_\infty)^{-1} \otimes \Si\right).$$
\end{proof}

\subsection{Proof of Theorem \ref{theorem:adaptive-lasso}}
\label{proof:theorem:adaptive-lasso}

\begin{proof}\myRed{We first show the asymptotic normality property before leveraging it to prove the consistency in variable selection of the Adaptive Lasso.
Since $d\YY^c_t = -\Q_0 \YY_{t-}dt + d\WW_t$, we center the log-likelihood around $\rmQ_0$ as follows:
$$\ell_t(\rmQ) = -\int_0^t \langle \rmQ \YY_{s}, d\WW_s\rangle_{\Si} - \frac{1}{2}\int_0^t\langle (\rmQ-\rmQ_0) \YY_s, (\rmQ-\rmQ_0) \YY_s\rangle_{\Si} ds +  \frac{1}{2}\int_0^t\langle \rmQ_0 \YY_s, \rmQ_0 \YY_s\rangle_{\Si} ds.$$
such that $\ell_t(\rmQ)-\ell_t(\rmQ_0) =  -\int_0^t \langle (\rmQ-\rmQ_0) \YY_{s}, d\WW^c_s\rangle_{\Si} - \frac{1}{2}\int_0^t\langle (\rmQ-\rmQ_0) \YY_s, (\rmQ-\rmQ_0) \YY_s\rangle_{\Si} ds$. Without loss of generality, we change the penalty rate from $\lambda$  to $\lambda t$ since it does not depend on $\rmQ$. Then, by writing $\rmQ = \rmQ_0 + t^{-1/2}\rmM$ for some $\rmM \in \MdR$, we have 
\begin{align*}
	t^{1/2}\left(\widehat{\rmQ}_{\AL,t} - \rmQ_0\right)_{|\gQ_0} &= \argmax_{\rmM \in \MdR} \kappa^1_t(\rmM) + \kappa^2_t(\rmM),
\end{align*}
where
\begin{align*}
	\begin{cases}
		\kappa^1_t(\rmM) &= -t^{-1/2}\int_0^t \langle \rmM \YY_{s-}, d\WW_s\rangle_{\Si} - \frac{t^{-1}}{2}\int_0^t\langle \rmM \YY_s, \rmM \YY_s\rangle_{\Si} ds,\\
		\kappa^2_t(\rmM) &= \lambda t \|\rmQ_0 \odot |\widehat{\rmQ}_t|^{-\gamma} \|_1 - \lambda t \|(\rmQ_0 + t^{-1/2}\rmM) \odot |\widehat{\rmQ}_t|^{-\gamma} \|_1.\\
	\end{cases}
\end{align*}}

\myRed{\paragraph{For the first function $\kappa^1_t$}
We define the process $\ermM_t:= \int_0^t \langle \rmM \YY_{s-}, d\WW_s\rangle_{\Si}ds$ and note that $[\ermM]_t = \int_0^t\langle \rmM \YY_{s-}, \rmM \YY_{s-}\rangle_{\Si} ds$ such that $\kappa^1_t(\rmM) = - t^{-1/2} \ermM_t -  \frac{1}{2}t^{-1} [\ermM]_t.$}

\myRed{We proceed similarly to the proof of Lemma \ref{lemma:network-asymptotic-normal} (Appendix \ref{proof:lemma:network-asymptotic-normal}): (i) $\ermM_t$ is a square integrable martingale (under $\proba_{\YY}$) by the square integrability of $\YY$; (ii) $\ermM_t$ is continuous hence $\ermM_t-\ermM_{t-}=0$ because $\YY$ is c{\`a}dl{\`a}g; (iii) we have the following ergodic convergence (Prop.\ \ref{proposition:ergodicity-vector}) as $t \rightarrow \infty$:
$$t^{-1}\int_0^t\langle \rmM \YY_s, \rmM \YY_s\rangle_{\Si} ds \rightarrow \E\left(\langle \rmM \YY_\infty, \rmM \YY_\infty \rangle_\Si\right), \proba_{\boldsymbol{0}}-a.s.$$
and we obtain the convergence in probability, too. Similarly to the proof of Prop.\ \ref{proposition:node-level-likelihood} (App.\ \ref{proof:proposition:node-level-likelihood}), namely Eq.\ \eqref{eq:inner-product-as-second-order-structure}, we have
$$\E\left(\langle \rmM \YY_\infty, \rmM \YY_\infty \rangle_\Si\right) = \vectorise(\rmM)^\top \cdot \rmK_\infty \otimes \Si^{-1} \cdot \vectorise(\rmM) =: K^1_\infty,$$
where $\rmK_\infty := E\left(\YY_\infty \YY_\infty^\top\right)$. From those conditions, Th.\ A.7.7, \cite{kuchler1997exponentialbook} yields the pairwise convergence
\begin{equation}
	\label{eq:pairwise-adaptive-lasso}
	\left(t^{-1/2} \ermM_t,\ \frac{t^{-1}}{2} \langle\ermM\rangle_t \right) \xrightarrow{\ \mathcal{D}\ } \left((K^1_\infty)^{1/2}Z, \ \frac{K^1_\infty}{2}\right), \quad \text{as $t \rightarrow \infty$,}
\end{equation}
where $Z$ is a standard Gaussian random variable on the same probability space as $\ermM_t$. Hence, since $K^1_\infty$ is deterministic and positive, we obtain $ t^{-1/2}\ermM_t \xrightarrow{\ \mathcal{D}\ } (K^1_\infty)^{1/2}Z$ as $t \rightarrow \infty$. A distribution trick \citep[Section 6.6]{Gaiffas2019SparseOUProcess} uses a $d \times d$ Gaussian matrix $\mZ$  with mean zero and covariance $\Cov(\vectorise(\mZ),\vectorise(\mZ)) = \mK_\infty \otimes \Si^{-1}$, that is, $\Cov(\mZ_{ij}, \mZ_{kl}) = (\mK_\infty)_{ik} \cdot \Si^{-1}_{jl}$. Then, for any $\rmM \in \MdR$, $\trace(\rmM \mZ)$ is a Gaussian random variable with mean zero and variance
\begin{align*}
	\Var\left(\trace(\rmM \mZ)\right) &= \sum_{ijkl}\rmM_{ji} \rmM_{lk} \Cov(\mZ_{ij}, \mZ_{kl}) = \sum_{ijkl}\rmM_{ji} \cdot (\mK_\infty)_{ik} \cdot \Si^{-1}_{jl} \cdot \rmM_{lk} = K^1_\infty.
\end{align*}
Using this reformulation and \eqref{eq:pairwise-adaptive-lasso}, we have
$$\kappa^1_t(\rmM) \xrightarrow{\ \mathcal{D}\ } - K^1_\infty/2  - \trace\left(\rmM \mZ\right), \quad \text{as $t \rightarrow \infty$.}$$
\paragraph{For the second function $\kappa^2_t$} Explicitly, we have
$$\kappa^2_t(\rmM) = -\lambda t \sum_{ij}|\widehat{\rmQ}_{t,ij}|^{-\gamma} \left(|\rmQ_{0,ij} + t^{-1/2}\rmM_{ij}| - |\rmQ_{0,ij}|\right).$$
For $(i,j)\in \gQ_0$, we have $t^{1/2} |^{-\gamma} \left(|\rmQ_{0,ij} + t^{-1/2}\rmM_{ij}| - |\rmQ_{0,ij}|\right) \rightarrow \sign(\rmQ_{0,ij}) |\rmM_{ij}|$ as $t\rightarrow \infty$. By the consistency of the MLE and the continuous mapping theorem, we have $|\widehat{\rmQ}_{t,ij}|^{-\gamma} \xrightarrow{\ p \ } |\rmQ_{0,ij}|^{-\gamma} > 0$. By assumption $\lambda t^{1/2} = \lambda(t) t^{1/2} \rightarrow 0$ and therefore we have
 $$\lambda(t) t^{1/2}|\widehat{\rmQ}_{t,ij}|^{-\gamma} \left(|\rmQ_{0,ij} + t^{-1/2}\rmM_{ij}| - |\rmQ_{0,ij}|\right) \xrightarrow{\ p \ } 0, \quad \text{as $t \rightarrow \infty$.}$$
Otherwise, for $(i,j)\in \widebar{\gQ}_0 := \{(k,l)\not\in\gQ_0: 1\leq k,l \leq d\}$, we proceed similarly but, from $t^{1/2}$-consistency of the MLE, we have $|t^{1/2}\widehat{\rmQ}_{t,ij}| = o_p(1)$. Hence, since $\lambda(t) t^{(\gamma+1)/2} \rightarrow \infty$ and $\gamma > 0$, we obtain that
 $$- \lambda(t) t^{(\gamma+1)/2}|t^{1/2}\widehat{\rmQ}_{t,ij}|^{-\gamma}|\rmM_{ij}|  \xrightarrow{\ p \ } -\infty, \quad \text{if $\rmM_{ij} \neq 0$, as $t \rightarrow \infty$.}$$
 and this quantity is zero if $\rmM_{ij} = 0$. We summarise the resulting asymptotic properties as follows:
$$
\kappa^1_t(\rmM) +\kappa^2_t(\rmM)  \xrightarrow{\ \mathcal{D}\ } \begin{cases}
 	-\infty, & \text{if } \supp(\rmM) \cap \widebar{\gG}_0 \neq \emptyset,\\
 	- K^1_\infty/2 -  \trace\left(\rmM \mZ\right), & \text{otherwise}.
 \end{cases}
$$
Suppose that the objective function is well-defined, i.e.\ $\supp(\rmM) \cap \widebar{\gG}_0 = \emptyset$: the entries from $\rmM$ with indices outside the support of $\mQ_0$ are all zero. In that case, we have $\trace(\rmM\mZ) = \vectorise(\rmM_{|\gG_0})^\top \cdot \vectorise(\mZ^\top_{|\gG_0})$. Also, we have $K^1_\infty = \vectorise(\rmM_{|\gG_0})^\top \cdot \left(\rmK_\infty \otimes \Si^{-1} \right)_{|\gG_0 \times \gG_0} \cdot \vectorise(\rmM_{|\gG_0})$ and we conclude that the objective function is quadratic in $\vectorise(\rmM_{|\gG_0})$. Its maximum in the limit $t\rightarrow \infty$, denoted $\widehat{\rmM}$, is given by
$$\vectorise(\widehat{\rmM}_{|\gG_0}):= - \left(\rmK_\infty^{-1} \otimes \Si \right)_{|\gG_0 \times \gG_0} \cdot \vectorise(\mZ^\top_{|\gG_0}) , \quad \text{and} \quad \widehat{\rmM}_{|\widebar{\gG}_0} := 0.$$
Given the covariance structure of $\mZ$, $\vectorise(\widehat{\rmM}_{|\gG_0})$ is a centred Gaussian random vector with covariance $\left(\rmK_\infty^{-1} \otimes \Si\right)_{|\gG_0 \times \gG_0}$ which   the proof of asymptotic normality.}

\myRed{\paragraph{Consistency in variable selection} We first note that the asymptotic normality of $\widehat{\rmQ}_{\AL,t}$ on $\gG_0$  yields that $\proba\left((\widehat{\rmQ}_{\AL,t})_{ij} \neq 0 \right) \rightarrow 1$ if $(i,j) \in \gG_0$. It remains to show that $\proba\left((\widehat{\rmQ}_{\AL,t})_{ij} = 0 \right) \rightarrow 1$ if $(i,j) \in \widebar{\gG}_0$. Suppose that $(\widehat{\rmQ}_{\AL,t})_{ij} \neq 0$. We first take the derivative of the objective function with respect to the $(i,j)$-th parameter of $\widehat{\rmQ}_{\AL,t}$. We multiply by $t^{-1/2}$, set it to zero before taking the absolute value on both sides: this yields the following relationship:
$$\left|t^{-1/2}\int_0^t Y^{(j)}_s dW^{(i)}_s + \left(t^{1/2}\vectorise(\widehat{\rmQ}_{\AL,t})^\top \cdot (t^{-1}\rmK_t) \otimes \Si^{-1}\right)_{d(j-1)+i}\right|  =  \lambda t^{(\gamma+1)/2}\cdot |t^{1/2}\widehat{\rmQ}_{t}|^{-\gamma}_{ij}.$$
On the right hand side, as mentioned above, we have $t^{1/2}(\widehat{\rmQ}_{\AL,t})_{|\widebar{\gG}_0} = o_p(1)$. Since $\lambda t^{(\gamma+1)/2}= O(1)$ and $\gamma > 0$, this side diverges to $\infty$ in probability as $t \rightarrow \infty$. The first term on the left-hand side is a martingale and is normally-distributed as $t\rightarrow \infty$. By the asymptotic normality of $t^{1/2}(\widehat{\rmQ}_{\AL,t})_{|\widebar{\gG}_0}$ and the fact that $t^{-1}\rmK_t \xrightarrow{\ p \ }\rmK_\infty$, the left-hand side is the absolute value of the sum of two Gaussian random variables  whose probability to be larger or equal to the right-hand side (which diverges to $\infty$) tends to zero. The right-hand side was computed under the assumption that $(\widehat{\rmQ}_{\AL,t})_{ij} \neq 0$, and a bound of the probability of that event happening is zero, hence we have that $\proba\left((\widehat{\rmQ}_{\AL,t})_{ij} = 0 \right) \rightarrow 1$ if $(i,j) \in \widebar{\gG}_0$. This shows the second and last property.}
\end{proof}

\subsection{Proof of Proposition \ref{proposition:sigma-mma}}
\label{proof:proposition:sigma-mma}
\begin{proof}[Proof of Proposition \ref{proposition:sigma-mma}]
	We define $\pi_{\VVV} :=\delta_{\VVV}$ a probability measure on $\Ss^{++}$ and there exists an $\Ss^+$-valued \Levy basis $\Lambda_{\LLL}$ on $\Ss^{++}\times \R$ corresponding to the characteristic quadruplet $(\boldsymbol{\gamma}_{\LLL}, \boldsymbol{0}, \nu_{\LLL}, \pi_{\VVV})$. We denote by $\vectorise(\Lambda_{\LLL}) = \left\{\vectorise(\Lambda(B)) : B \in \mathcal{B}_b(\Ss^{++}\times \R)\right\}$ the $\R^{d^2}$-valued \Levy basis corresponding to $\Lambda_{\LLL}$ and we rewrite $(\Si_t, \ t \in \R)$ as follows:
 	$$\vectorise(\Si_t) \overset{d}{=} \int_{\Ss^{+}}\int_{\R}\indicator_{[0,\infty)}(t-s)e^{-(t-s)(\boldsymbol{A}\otimes \Id + \Id \otimes \boldsymbol{A})}\vectorise(\Lambda_{\LLL}(d\boldsymbol{A}, ds)),$$
 	 which can also be interpreted as a $d^2$-dimensional \levy-driven MMA process which is mixing \citep[Theorem 3.5]{fuchs2013mixing}.
\end{proof}

\subsection{Proof of Proposition \ref{proposition:f1-non-degenerate}}
\label{proof:proposition:f1-non-degenerate}
\begin{proof}[Proof of Proposition \ref{proposition:f1-non-degenerate}]
Recall that
$$\vectorise\left(\int_{a}^b e^{-(b-s)\Q}\Si_s e^{-(b-s)\Q^\top}ds\right) = \int_a^b e^{-(b-s)(\rmQ\otimes \Id + \Id \otimes \Q)}\vectorise(\Si_s)ds.$$
Using the stationarity of $(\Si_t)$ and Equation \eqref{eq:integrated-variance-lebesgue}, by Fubini's theorem we have

\begin{align*}
	\int_a^b &e^{-(b-s)(\rmQ\otimes \Id + \Id \otimes \Q)}\vectorise(\Si_s)ds\\
	 &= \int_a^b e^{-(b-s)(\rmQ\otimes \Id + \Id \otimes \Q^\top)}\int_{-\infty}^s e^{-(s-u)(\VVV\otimes \Id + \Id \otimes \VVV^\top)}\vectorise(d\LLL_u) ds\\
	 &= \int_a^b e^{-(b-s)(\rmQ\otimes \Id + \Id \otimes \Q^\top)}\int_{-\infty}^0 e^{-v(\VVV\otimes \Id + \Id \otimes \VVV^\top)}\vectorise(d\LLL_v) ds\\
	 &= \int_{-\infty}^0 \int_a^b e^{-(b-s)(\rmQ\otimes \Id + \Id \otimes \Q^\top)} e^{-v(\VVV\otimes \Id + \Id \otimes \VVV^\top)} ds \times \vectorise(d\LLL_v) \\
	 &= \boldsymbol{\rho}^{-1}(\rmQ)\left[\Id - e^{-(b-a)(\rmQ\otimes \Id + \Id \otimes \Q^\top)}\right] \int_{-\infty}^0 e^{-v(\VVV\otimes \Id + \Id \otimes \VVV^\top)} \vectorise(d\LLL_v).
\end{align*}
Using Equations \eqref{eq:levy-matrix-subordinator-moment-conditions} \& \eqref{eq:characteristic-function-matrix-subordinator}, we apply Corollary 4.1, \cite{basse2014stochastic}  to have that
$$\int_{a}^b e^{-(b-s)\Q}\Si_s e^{-(b-s)\Q^\top}ds < \infty, \quad a.s.$$
for any $a < b \in \R \cup\{\pm\infty\}$. Indeed, the first and second conditions are true by assumption and definition of the subordinator $\LLL$ whilst the third condition is true if we consider equivalently the truncation functions  $\tau(\boldsymbol{z}) := \mathbb{I}_{\{\boldsymbol{x} \in \R^d : \|\boldsymbol{x}\| \leq 1\}}(\boldsymbol{z})$ on $\R^d$ and $\tau_d(\boldsymbol{Z}) := \mathbb{I}_{\{\boldsymbol{X} \in \MdR : \|\boldsymbol{X}\| \leq 1\}}(\boldsymbol{Z})$ on $\MdR$ instead of the truncation function $\widetilde{\tau}(\boldsymbol{x}) \equiv 0$ on both $\R^d$ and $\MdR$ as given in Part 1, \cite{barndorff2015change} and used in Section \ref{section:stoch-vol-characteristic-function}.
By the independence between $\LLL$ and $\WW$, we obtain that
$$\E\left(e^{i\boldsymbol{u}^\top \mathbb{F}^{(1)}_{ab}} \Big| \sigma\left(\{\Si_s, \ s \in [a,b]\}\right)\right) = \exp\left\{-\frac{1}{2}\boldsymbol{u}^\top\left(\int_{a}^b e^{-(b-s)\Q}\Si_s e^{-(b-s)\Q^\top}ds\right)\boldsymbol{u}\right\},$$
where $\sigma(\{\Si_s, \ s \in [a,b]\})$ is the $\sigma$-algebra generated by $(\Si_s, \ a \leq s \leq b)$. Hence, for some $c_1 > 0$ small enough, 
 we have for $\|u\| \leq c_1$ 
$$\E\left(e^{i\boldsymbol{u}^\top \mathbb{F}^{(1)}_{ab}} \Big| \sigma\left(\{\Si_s, \ s \in [a,b]\}\right)\right) \leq 1 -\frac{1}{2}\boldsymbol{u}^\top\left(\int_{a}^b e^{-(b-s)\Q}\Si_s e^{-(b-s)\Q^\top}ds\right)\boldsymbol{u} \quad a.s.$$
Let $c_2 :=\frac{1}{2} \min_{i,j} \left(\int_{a}^b e^{-(b-s)\Q}\E\left(\Si_1\right) e^{-(b-s)\Q^\top}ds\right)_{ij}$ and then, by Fubini's theorem, the distribution of $\mathbb{F}^{(1)}_{ab}$ is non-degenerate since
 $$\E\left(e^{i\boldsymbol{u}^\top \mathbb{F}^{(1)}_{ab}}\right)  \leq 1 -c_2\|\boldsymbol{u}\|^2, \quad \text{for any } \|\boldsymbol{u}\| \leq c_1,$$
for some $c_2 > 0$ according to Proposition 24.19, \cite{sato1999levy}.
\end{proof}

\subsection{Proof of Proposition \ref{proposition:f1-statio}}
\label{proposition:f1-statio:proof}
\begin{proof}[Proof of Proposition \ref{proposition:f1-statio}]
	Let $k \in \N$, $0 \leq t_1 < \dots < t_k$ and $h > 0$. Then
	$$(\mathbb{F}^{(1)}_{t_1+h},\dots, \mathbb{F}^{(1)}_{t_k+h}) \overset{d}{=} \left(\int_{-\infty}^{t_i+h}e^{-(t_i+h-{s_i})\Q}\Si_{{s_i}}^{1/2}d\WW_{s_i}, \ i \in \{1,\dots,k\}\right),$$
and defining $u_i = s_i - (t_i+h)+t_1$ for all $i \in \{1,\dots,k\}$ yields
$$(\mathbb{F}^{(1)}_{t_1+h},\dots, \mathbb{F}^{(1)}_{t_k+h}) \overset{d}{=} \left(\int_{-\infty}^{t_1}e^{-(t_1-u_i)\Q}\Si_{u_i+h +t_i-t_1}^{1/2}d\WW_{u_i +t_i-t_1 +h}, \ i \in \{1,\dots,k\}\right).$$
By strict stationarity of $(\Si_t)$, we have
$$(\Si_{u_1+h},\dots, \Si_{u_k+t_k-t_1+h}) \overset{d}{=} (\Si_{u_1},\dots, \Si_{u_k+t_k-t_1}),$$
and we conclude that
$$(\mathbb{F}^{(1)}_{t_1+h},\dots, \mathbb{F}^{(1)}_{t_k+h}) \overset{d}{=} (\YY_{t_1},\dots, \YY_{t_k}),$$
and this result on finite-dimensional distributions gives the strict stationarity of $\mathbb{F}^{(1)}$.
\end{proof}

\subsection{Proof of Proposition \ref{proposition:si-and-j2-mixing}}
\label{proposition:si-and-j2-mixing:proof}
 \begin{proof}[Proof of Proposition \ref{proposition:si-and-j2-mixing}]
 	We define $\pi_\rmQ := \delta_\Q$, a probability measures on $\Ss^{++}$, where $\delta$ is the Dirac delta distribution. According to Section \ref{section:levy-bases-and-mma-processes}, there exist an $\R^d$-valued \Levy basis $\Lambda_{\JJ}$  on $\Ss^{++} \times \R$ associated with the quadruplet $(T \times \boldsymbol{\gamma}_\JJ, \boldsymbol{0}, T \otimes \nu_\JJ, \pi_{\Q})$. By independence between $(\JJ_t)$ and $(T_t)$, we obtain  that
 	  	$$\mathbb{F}^{(2)}_{-\infty t} \overset{d}{=} \int_{\R^d}\int_{\R}\indicator_{[0,\infty)}(T_t-s)e^{-(T_t-s)\boldsymbol{A}}\vectorise(\Lambda_{\JJ}(d\boldsymbol{A}, ds)), \quad \forall t \in \R.$$
 	  	Both of those quantities can be interpreted as \levy-driven MMA process similarly to Equation (4.5), \cite{fuchs2013mixing}. Therefore, it is mixing by Theorem 3.5, \cite{fuchs2013mixing}.
 \end{proof}

\subsection{Proof of Proposition \ref{proposition:yt-mixing-stoch-vol}}
\label{proposition:yt-mixing-stoch-vol:proof}
 \begin{proof}[Proof of Proposition \ref{proposition:yt-mixing-stoch-vol}]
%
 	 
 	 Using Proposition \ref{proposition:sigma-mma} and the proof of Proposition \ref{proposition:f1-non-degenerate}, recall that $(\Si_t, \ t\in \R)$ is an MMA process hence mixing \citep[Theorem 2.5]{fuchs2013mixing} Theorem such that for all $t \in \R\cup \{\infty\}$
\begin{equation*}
	\int_{-\infty}^t e^{-(t-s)\Q} \Si_s e^{-(t-s)\Q^\top}ds < \infty \quad a.s.,
\end{equation*}
as a Lebesgue integral of $(\Si_t)$ $\omega$-wise (see Equation \eqref{eq:integrated-variance-lebesgue}). Also, recall that  $(\mathbb{F}^{(1)}_{-\infty t}, \ t \in \R)$ is stationary by Proposition \ref{proposition:f1-statio}.
From the integrability of $(\Si_t)$, we have that $(\mathbb{F}^{(1)}_{-\infty t}, \ t \in \R)$ is a centred Gaussian process where its autocovariance function $\Acov(h) := \E\left(\mathbb{F}^{(1)}_{-\infty t} \mathbb{F}^{(1)}_{-\infty t+h}\right)$ for $h \geq 0$ (and some $t \in \R$) is given by
\begin{align*}
\Acov(h) &= \E\left[\int_{-\infty}^{t} e^{-(t-u)\Q}\Si_u^{1/2}d\WW_u\left(\int_{-\infty}^{t+h} e^{-(t+h-v)\Q}\Si_v^{1/2}d\WW_v\right)^\top\right]\\
&= \left[\int_{-\infty}^t e^{-(t-u)\Q} \E\left(\Si_u\right)e^{-(t-u)\Q^\top} \right] e^{-h\Q^\top}, \quad \text{by Fubini's theorem,}\\
&=O(e^{-h\Q^\top}) = o(1), \quad \text{as $h\rightarrow \infty$ since $\E(\Si_t)= -\rho^{-1}(\E\left(\LLL_1\right) )< \infty$,}
\end{align*}
where we recall that $\rho(\boldsymbol{X}) = \VVV \boldsymbol{X} + \boldsymbol{X} \VVV^\top$.

According to Sections 3.7 \& 4.12, \cite{dym2008gaussian}, given that $\mathbb{F}^{(1)}$ is stationary and a centred Gaussian process such that its autocovariance $\Acov$ is continuous and such that $\Acov(h)=o(1)$ as $h\rightarrow \infty$, we conclude that $(\mathbb{F}^{(1)}_{-\infty t})$ is mixing. 
Therefore, by Proposition \ref{proposition:si-and-j2-mixing}, $(\YY_t, \ t \in \R)$ is the sum of two mixing processes hence itself mixing and ergodic.

 \end{proof}

\bibliographystyle{agsm}
\bibliography{grou-cts}

\end{document}